\def\i{\sqrt{-1}}
\newtheorem*{theorem*}{Theorem}
\newtheorem{theorem}{Theorem}[section]
\newtheorem*{notation*}{Notation}
\newtheorem{lemma}[theorem]{Lemma}
\newtheorem{corollary}[theorem]{Corollary}
\newtheorem{remark}[theorem]{Remark}
\newtheorem{example}[theorem]{Example}
\newtheorem{proposition}[theorem]{Proposition}
\newtheorem{conj}[theorem]{Conjecture}
\newtheorem{definition}[theorem]{Definition}
\title[Wall-chamber decompositions for gMA equations]{Wall-chamber decompositions for 
generalised Monge-Amp\`ere equations}
\author{Sohaib Khalid}
\author{Zakarias Sjöström Dyrefelt}
\address{Sohaib Khalid \\ Department of Mathematics and Mathematical Statistics, Umeå University, MIT-huset, Linnaeus väg, 907 36 Umeå, Sweden.}
\email{sohaib.khalid@umu.se}
\address{Zakarias Sjöström Dyrefelt \\ Department of Mathematics, Aarhus University, Ny Munkegade 118, 8000, Aarhus C, Denmark. }
\email{dyrefelt@math.au.dk}
\subjclass[2020]{14J60, 32Q26, 53C07, 53C55, 53E30}
\begin{document}

\maketitle

\begin{abstract}
Generalised Monge-Amp\`ere equations form a large class of PDE including Donaldson's J-equation, inverse Hessian equations, some supercritical deformed Hermitian-Yang-Mills equations, and some Z-critical equations. Solvability of these equations is characterised by numerical criteria involving intersection numbers over all subvarieties, and in this paper, we aim to characterise algebraically what happens when these nonlinear Nakai-Moishezon type criteria
fail. As a main result, we show that under mild positivity assumptions,
there is a finite number of subvarieties violating the Nakai type criterion, and such subvarieties are rigid in a suitable sense. This gives first effective solvability criteria for these families of PDE, thus improving on work of Gao Chen, Datar-Pingali, Song and
Fang-Ma, and provides first existence results in higher dimensions of 
a natural PDE analog of Bridgeland's locally finite wall-chamber decomposition.
\end{abstract}

\section{Introduction}

\noindent Let $(X,\omega)$ be a compact K\"ahler manifold, with $\alpha := [\omega]\in H^{1,1}(X,\mathbb{R})$ its associated K\"ahler class. Fix moreover real closed differential $(p,p)$-forms $\mathbf{\Theta}_p$ and consider 
$$
\mathbf{\Theta} = \sum_{p = 1}^n \mathbf{\Theta}_p.
$$
The generalised Monge-Amp\`ere equation, introduced by Pingali \cite{Pingali}, seeks a K\"ahler form $\omega_{\varphi} := \omega + dd^c\varphi \in \alpha$ such that
\begin{equation} \label{gMA eq}
(\mathbf{\Theta} \wedge \exp(\omega_{\varphi}))^{[n,n]} = \kappa \exp(\omega_{\varphi})^{[n,n]},
\end{equation}
where $\alpha^{[n,n]}$ denotes the $(n,n)$-part of a differential form $\alpha$, and $\kappa$ is the only possible cohomological constant.
Note especially that if \(\Theta=\kappa\theta^k\) for a Kähler form \(\theta\), then \eqref{gMA eq} is an inverse Hessian equation, and if $\kappa = 1$ this is the J-equation. We refer to \cite{FangMa, DatarPingali} for details on this broad formalism, including how special cases of interest (e.g. J-equation, inverse Hessian equations, dHYM equation) can be interpreted within this framework (\cite{FangLaiMa, DatarPingali, FangMa}).

In one of the most studied special cases of Donaldson's J-equation \cite{DonaldsonJobservation}, seminal work has shown that the associated J-flow 
converges away from a union of subvarieties given roughly by the singular part of the Siu decomposition of a certain form. Moreover, recent work of \cite{DatarMeteSong} paints an intriguing conjectural picture where the J-equation, despite apparent obstructions coming from \emph{instability}, can be solved \emph{in the weak sense of currents} away from certain subvarieties. In each of these cases, these exceptional subvarieties remain to be well understood.

From a related point of view, influential works on the J-equation \cite{GaoChen}, the strongly related deformed Hermitian Yang-Mills (dHYM) equation, as well as the generalised Monge-Amp\`re equations \cite{DatarPingali, FangMa} have shown that existence of a solution to these equations can be characterised by an algebro-geometric numerical criterion that involves checking the positivity of certain intersection numbers along all subvarieties $V \subset X$, reminiscent of other notions of stability arising in complex differential and algebraic geometry. Mirroring the theory of K-stability and the Yau-Tian-Donaldson conjecture, there also exists a characterisation by testing on all \emph{test configurations} (see \cite{LejmiGabor}). More recently, Reboulet \cite{Reboulet} has also given a different numerical criterion in the spirit of the Hilbert-Mumford criterion from geometric invariant theory. In each of these cases, it is natural to expect that it is in fact enough to test for a finite number of test configurations (resp. subvarieties), related to the previously mentioned locus where a suitable geometric flow blows up. 

Finally, a recent idea is that of a differential geometric analog to the celebrated wall-chamber decompositions exhibited in Bridgeland stability. 
More precisely, the existence of a (locally) finite number of subvarieties characterising stability via e.g. the criteria of Gao Chen \cite{GaoChen}, would lead to (locally) finite wall-chamber decompositions; a question where in higher dimension the main challenge includes understanding the interaction between subvarieties of different (co-)dimension, and their respective roles in the theory. In this paper, we set out to answer this question for the aforementioned \emph{generalised Monge-Amp\`ere (gMA) equations}, and more generally, the Z-critical equation (see \cite{DMS}), which is conjectured to be the correct analog to Bridgeland stability on the side of PDE. Our results include 1) A comprehensive analysis of the J-equation on threefolds, providing the first natural higher-dimensional wall-chamber decompositions and effective testing criteria beyond the case of surfaces (cf. \cite{SohaibDyrefelt}). 2) Extensions of these results to a broad class of gMA equations and Z-critical equations in arbitrary dimensions under suitable positivity conditions. 3) Explicit constructions of non-trivial locally finite wall-chamber decompositions in any dimension, for gMA equations.

Alongside direct implications for the study of geometric PDE, the core idea of the paper is to consider what happens algebraically when Nakai-Moishezon type conditions fail (cf. also the Demailly-Paun theorem \cite{DemaillyPaun}). We thus address (special cases of) the following basic and natural question. 
Suppose $\Omega$ denotes some specific PDE which cannot be solved. What are the possible subvarieties $V$ that violate the Nakai-Moishezon type numerical criterion for solvability, that is, for which subvarieties $V$ (which we shall call \emph{destabilising subvarieties}) do we have 
    \[
    \int_V \vartheta_p(\Omega) \leq 0?
    \]
Here $\vartheta_p(\Omega)$ are the degree $(p,p)$ cohomology classes coming from the Nakai-Moishezon criterion associated to $\Omega$. More precisely:
\begin{enumerate}
    \item What constraints, if any, are there on the geometry of such $V$?
    \item What is the cardinality of the set of such $V$ if we keep $\Omega$ fixed?
    \item If $\Omega_b, b\in \mathcal B$ is a continuous family of PDEs parametrised by $\mathcal B$, what can we say about the set of subvarieties that occur as destabilising subvarieties for $\Omega_{b_0}$ for some $b_0 \in \mathcal B$? 
\end{enumerate}
\subsection{Statement of main results}
Our techniques can be applied in a fairly broad setting, namely the class of gMA equations of the form \eqref{gMA eq} with \[\mathbf\Theta = \sum_{k=0}^{n-1} c_k \theta^k + f\theta^n\]
where $\theta$ is a fixed Kähler form, $c_k$ are real constants and $f$ is a smooth function. These are by definition subject to a standard positivity constraint, which in particular includes that each $c_k$ be nonnegative. (See \cite[eq. (1.2)]{DatarPingali} for a precise statement.) 

The first key observation here is that \emph{any} gMA equation is \emph{factorisable} in a sense we make precise in this paper (see Definition \ref{def:factorisablegMAeq}). Concretely, this means that we can `factorise' the cohomology classes occurring in the numerical criterion for solvability, so that the following holds: \eqref{gMA eq} is solvable if and only if for each $1\leq p \leq n-1$, and each proper irreducible subvariety $V$ of $X$ of dimension $p$, we have
\[
\int_V \tau_p(\alpha,\mathbf\Theta) \cdot \rho(\alpha,\Theta) > 0.
\]
Here the $\tau_p = \tau_p(\alpha,\mathbf\Theta)$ (for $p=1,2,\dots,n-1$) are certain $(1,1)$-cohomology classes depending only on the gMA equation under consideration, that is, on the data $(X,\alpha,\mathbf\Theta)$, which we call the \emph{factor classes}, and $\rho_p = \rho_p(\alpha,\mathbf\Theta)$ are $(p-1,p-1)$ classes that can be written as positive linear combinations of wedge products of Kähler classes. Each destabilising subvariety of dimension $p$ then lies in the non-Kähler locus of $\tau_p$. We refer to Lemma \ref{lem:gMAfactorisable} for details on how to obtain $\tau_p$ from $(X,\alpha,\mathbf\Theta)$, where we also prove that all gMA equations are in fact factorisable. 

From this observation, and by virtue of the numerical criterion established by the work of Datar-Pingali \cite{DatarPingali} and Fang-Ma \cite{FangMa}, we obtain the following main result, providing a general tool for effective testing for solvability and an analog to Bridgeland's wall-chamber decompositions:  

\begin{theorem} \label{main thm gMA intro}
Let $X$ be a compact Kähler manifold of dimension $n$ and let $\alpha,\beta$ be Kähler classes on $X$ and $\theta \in \beta$ a fixed Kähler form. Suppose that 
    \[
    \mathbf\Theta = \sum_{k=1}^{n-1}c_k \theta^k + f\theta^n
    \]
    with $c_k\geq 0$, $f\geq 0$ not all zero satisfies the condition \cite[(1.2)]{DatarPingali}. 
\begin{enumerate}
    \item \emph{\textbf{(Factorisability)}} The generalised Monge-Amp\`ere equation associated to $(X,\alpha,\mathbf\Theta)$ is factorisable. Denote by $\tau_p$ the associated factor classes.
    \item \emph{\textbf{(Effective testing)}} Suppose that  $\tau_p$ are $(p+1)$-modified Kähler for each $p=1,\dots, n-1$. Then, there is a finite set of subvarieties $V_1,\dots,V_l$ such that 
    $$
    \int_{V_i} \exp(\alpha) \cdot (\kappa - [\mathbf\Theta]) > 0, \; \; i = 1,2,\dots,l
    $$
    implies existence of a solution to the gMA equation \eqref{gMA eq}.
    \item \emph{\textbf{(Chamber decomposition)}} Let $\mathcal S_+(X, \mathrm{gMA})(\alpha)$ be the set of all $\mathbf\Theta$ as above with $f = c_n$ a constant, endowed with the natural Euclidean topology. Let $\mathcal S_+(X,\mathrm{gMA})(\alpha)^{\mathrm{Stab}}$ denote the locus of those $\mathbf\Theta$ which define a gMA equation admitting a smooth solution. For every $\Gamma
    \in\partial \mathcal S_+(X, \mathrm{gMA})(\alpha)^{\mathrm{Stab}}$ belonging to the boundary there exists an open neighbourhood $U$ of $\Gamma$ in $\mathcal S_+(X, \mathrm{gMA})(\alpha)$ such that $U\cap \partial \mathcal \mathcal S_+(X, \mathrm{gMA})(\alpha)^{\mathrm{Stab}}$ is a finite union of walls cut out by equations of the form \[\int_V \exp(\alpha)\cdot (\kappa-[\mathbf\Theta]) = 0.\] 
\end{enumerate}
\end{theorem}
\begin{remark}
\emph{We can also show that all supercritical dHYM equations are factorisable when $\dim X \leq 3$ and some are factorisable for $\dim X = 4$; see Theorem \ref{thm:dHYM}.}
\end{remark}

\noindent We reiterate that Main Theorem \ref{main thm gMA intro} applies in particular to inverse Hessian equations, and a version of this result also holds for the supercritical deformed Hermitian Yang-Mills (dHYM) equation, see Section \ref{subsection supercritical dHYM} The close parallels with the chamber decompositions occurring in Bridgeland stability are elaborated on in Section \ref{Section chamber}.

\begin{remark}
\emph{(Z-critical equations) Another important class studied in recent literature are the Z-critical equations, introduced in \cite{DMS} as differential geometric analogues of special polynomial stability conditions \cite{Bayer} modeled on the theory of Bridgeland \cite{Bridgeland}. The above theorem applies to a subclass of these, given appropriate choice of stability datum. We explain a particular example of this more concretely, adopting the notation and conventions of \cite[Section 2.1]{DMS}. Let $L$ be a holomorphic line bundle on a compact Kähler manifold $X$ of dimension $n$. Let us consider unipotent cohomology classes of the form $U = \exp(A)$ where $A \in H^{1,1}(X,\mathbb R)$ is any cohomology class such that $A+c_1(L) \in \mathcal K_X$. Then, for any stability datum $\Omega = (\beta, \rho, \exp(A))$ where the stability vector $\rho=(\rho_0,\rho_1,\dots,\rho_n)$ satisfies 
\[
b_k := \frac{\operatorname{Im}\left(\overline{\operatorname{Z}_\Omega(L)}\rho_k\right)}{\operatorname{Im}\left(\overline{\operatorname{Z}_\Omega(L)}\rho_0\right)}<0, \quad k = 1, 2, \dots, n,
\]
we obtain a gMA equation satisfying the positivity conditions given in \cite[(1.2)]{DatarPingali} for the Kähler class $\alpha = A + c_1(L)$ and 
\[
\mathbf\Theta = \sum_{k=1}^n b_k \theta^k
\]
for any Kähler form $\theta\in\beta$. Thus, Theorem \ref{main thm gMA intro} applies to this class of Z-critical equations.}
\end{remark}

\begin{remark}
    \emph{More broadly, one should study Z-critical equations on higher rank vector bundles (or, indeed, arbitrary coherent sheaves) in order to understand the full force of the theory of stability conditions from a differential geometric standpoint. From the point of view of the present work, it would be desirable to understand the set of destabilising objects in higher rank. Here, the main difficulty is that destabilising objects may now come in two different forms. In addition to there being destabilising subvarieties, there might also be destabilising subsheaves of the associated holomorphic vector bundle. However, some of the same considerations might apply to this much more complicated setting. A special case of this setup has recently been considered by Keller-Scarpa \cite{KellerScarpa}. In their work, they term what we call stability as \emph{positivity} and reserve the term \emph{stability} for the higher rank condition involving subsheaves (or conditions that involve a mixture of both subsheaves and subvarieties). They propose general conjectures that relate positivity, stability and the existence of solutions of the relevant PDEs.}
\end{remark}

\subsection{Effective testing and wall-chamber decomposition for the J-equation}
We now take a closer look at the important special case of Donaldson's J-equation, in which we can improve further on our main theorem, and infer concrete applications and characterisations of the special subvarieties we study in the context of J-stability and stability thresholds. The first results in the direction of chamber decompositions were recently proven in \cite{SohaibDyrefelt}, showing that locally finite wall-chamber decompositions always exist on K\"ahler surfaces. In dimension $3$ and onwards the question is completely open, lacking even non-trivial examples confirming this conjectural picture. Compared with the surface case the higher dimensional picture is much more subtle, for the two main reasons that: a) intersection theory and positivity notions are much more well-behaved on surfaces b) in higher dimension there is a complex interaction between subvarieties of different codimensions. 

In the spirit of slope stability, we will use the following notation: if $X$ is a compact Kähler manifold and $\alpha,\beta$ are Kähler classes on $X$, set 
$$
\mu_{\alpha,\beta}(V) := (\dim V) \frac{\alpha^{\dim V - 1}\cdot \beta \cdot [V]}{\alpha^{\dim V}\cdot [V]}.
$$
We will abbreviate $\mu_{\alpha,\beta}(X) =: \mu_{\alpha,\beta}$. The triple $(X,\alpha,\beta)$ is called $J$-stable (respectively $J$-semistable) if for all proper irreducible analytic subvarieties $V$ of $X$, we have 
$
\mu_{\alpha,\beta}(V) < \mu_{\alpha,\beta} \quad \textrm{ (respectively } \mu_{\alpha,\beta}(V)\leq \mu_{\alpha,\beta}\textrm{)},
$
and $J$-unstable if it is not $J$-semistable. We will often drop the prefix and refer to these notions as simply stable or semistable respectively.
We will also make use of the \emph{stability threshold} studied in e.g. \cite{SD4}, given by
$$
\Delta^{\mathrm{pp}}_{\beta}(\alpha) := \inf_{V \subset X} \frac{1}{\dim X-\dim V}\left(\mu_{\alpha,\beta} - \mu_{\alpha,\beta}(V)\right),
$$

which is strictly positive precisely if the J-equation is solvable for $(X,\alpha,\beta)$.

As main objects of study we then consider the following sets of destabilising (resp. optimally destabilising) subvarieties for the $\mathrm{J}_{\alpha,\beta}$-equation: 
\begin{equation}
\mathrm{Dest}_{\alpha,\beta} := \{V \subset X \; \vert \; \mu_{\alpha,\beta} \leq \mu_{\alpha,\beta}(V) \}
\end{equation}
\begin{equation}
\mathrm{Dest}_{\alpha,\beta}^{\mathrm{opt}} := \left\{V \subset X \; \vert \; \frac{1}{\dim X-\dim V}\left(\mu_{\alpha,\beta} - \mu_{\alpha,\beta}(V)\right) = \Delta^{\mathrm{pp}}_{\beta}(\alpha) \leq 0 \right\} 
\end{equation}

\noindent It is natural to aim to understand elements of these sets, in particular under which conditions they are finite. 
We begin by stating a general version of our main result for the J-equation.

\begin{theorem} \label{main thm 1 intro}
Suppose $X$ is a compact K\"ahler manifold. Let $\alpha,\beta$ be K\"ahler classes on $X$. 
\begin{enumerate}
    \item (\cite{SohaibDyrefelt}) Suppose $\dim X = 2$. Then, there are only finitely many destabilising
curves, each of which is a curve of negative self-intersection.
    \item Suppose $\dim X = 3$ and $$\tau_2(\alpha,\beta) := \mu\alpha - 2\beta$$ is a big cohomology class on $X$. Then, there exists a proper analytic subset $V_{\alpha,\beta}$ such that each irreducible component of $V_{\alpha,\beta}$ is a destabilising subvariety and all destabilising subvarieties are contained in $V_{\alpha,\beta}$. If moreover
$(X, \alpha, \beta)$ is J-semistable, then there exist only finitely many
destabilising subvarieties. Each irreducible component $V$ of the set $V_{\alpha,\beta}$ is moreover rigid in
the following sense: 
\begin{itemize}
    \item every irreducible surface component is the unique effective analytic cycle representing its homology class.
    \item every irreducible curve component $C$ satisfies the following: for every irreducible surface $S$ containing $C$, either $C$ is an irreducible component of the singular locus of $S$ or (the strict transform of) $C$ is a curve of negative self-intersection in (any resolution of singularities of) $S$.
\end{itemize}
    \item Suppose $$\tau_p(\alpha,\beta) := \mu\alpha - p\beta$$ is a $(p+1)$-modified K\"ahler class for each $p = 1,2,\dots, \dim X -1$. If moreover $(X,\alpha,\beta)$ is J-semistable, then there exist only finitely
many destabilising subvarieties, and each destabilising divisor is the only effective analytic cycle
representing its homology class. 
\item Suppose $$(\mu - (n-p)\Delta^{\mathrm{pp}}_{\beta}(\alpha))\alpha - p\beta$$ is a $(p+1)$-modified K\"ahler class for each $p = 2,3,\dots, \dim X -1$. Then, there is a finite number of optimal destabilisers. 
\end{enumerate}
\end{theorem}

\begin{remark}
\emph{There are many cases of $(X,\alpha,\beta)$ for which $\Delta^\mathrm{pp}_{\beta}(\alpha)$ is naturally negative enough so that $(\mu - (n-p)\Delta^\mathrm{pp}_{\beta}(\alpha))\alpha - p\beta$ are all in fact $(p+1)$-modified K\"ahler. This ensures that the above statements are not void, and in fact we will show that they are applicable to large classes of examples, see Section \ref{Section example} and Theorem \ref{Cor blowup} below.}
\end{remark}

\begin{remark} \emph{The proof gives a finite list of subvarieties over which it is enough to test J-positivity (similarly, from the point of view of J-stability, it suffices to consider test configurations arising as the deformation to the normal cone for these finitely many $V_i$). Moreover, if $\dim X = 2$ or $3$, then any optimally destabilising subvariety is rigid in a suitable sense, see Section \ref{Section main results J}. More generally, the divisors occurring as potential optimal destabilisers can be recognised as precisely those with strictly positive Lelong number $\nu(\mu\alpha - (n-1)\beta, D) > 0$, i.e. the finitely many divisors in the negative part of the divisorial Boucksom-Zariski decomposition \cite{Boucksomthesis}.}
\end{remark}
\begin{remark}
\emph{
If the classes $\mu\alpha - p\beta$ are all $(p+1)$-modified K\"ahler, then, by openness of the modified K\"ahler cones, there is a strictly positive $\lambda > 0$ such that the $(\mu - (n-p)\lambda)\alpha - p\beta$ remain $(p+1)$-modified K\"ahler. This leads to a fascinating `gap statement' improving further on Theorem \ref{main thm 1 intro}, namely 
that only finitely many subvarieties satisfy
    $$
    \frac{1}{\dim X-\dim V}\left(\mu_{\alpha,\beta} - \mu_{\alpha,\beta}(V)\right) \leq \Delta^{\mathrm{pp}}_{\beta}(\alpha) + \lambda.
$$
}
\end{remark}
\begin{remark}
\emph{The above theorem holds under certain positivity assumptions on $\mu\alpha - p\beta$ being $(p+1)$-modified K\"ahler. While we do expect that these statements can be improved in general, some positivity hypotheses for these classes are necessary: 
Indeed, it suffices to consider the blowup of $\mathbb{P}^n$ at a point, 
where the exceptional divisor $E$ contains infinitely many $(n-2)$-dimensional linear hyperplanes that, depending on the classes $\alpha,\beta$ considered, may all be destabilising. However, as long as $\mu_{\alpha,\beta}\alpha - 2\beta$ is big (and $n=3$), the above theorem prohibits these lines from being destabilising at semistability, and one may verify by direct computation that as we pass from the stable to the unstable locus only $E$ itself will in fact destabilise. 
Interestingly, if we drop the hypothesis that $\mu_{\alpha,\beta}\alpha - (n-2)\beta$ is $(n-1)$-modified K\"ahler, there may be infinitely many destabilisers (see Example \ref{Subsection ex}). 
See also Example \ref{Subsection ex Cutkosky} and Section \ref{Subsection optimal dest Jstab}.}
\end{remark}

\noindent We now discuss several applications of the above theorem, that serve as key motivation for this paper. First, it provides an improvement to results of Gao Chen \cite{GaoChen} and Datar-Pingali \cite{DatarPingali}, Song \cite{Song}, and takes steps toward a more effective testing of solvability of the J-equation. That solvability implies stability in this context is known from \cite{CollinsGabor}, and with our contribution it adds up to the following strengthening of \cite[Main theorem]{GaoChen}:
\begin{theorem}
\label{Cor finite test intro}
\begin{enumerate}
    \item Suppose the hypotheses of Theorem \ref{main thm 1 intro} (2) hold. Then the J-equation is solvable on $(X,\alpha,\beta)$ if and only if 
    \[\mu_{\alpha,\beta} > \mu_{\alpha,\beta}(V)\] for $V$ belonging to the following finite list: $S_1,\dots, S_r, C_1, \dots, C_\ell$, where the $S_i$ are the irreducible surface components of the non-Kähler locus of $\mu_{\alpha,\beta} \alpha - 2 \beta$ and $C_j$ are the irreducible components in the negative part of the Zariski decomposition of $(\mu_{\alpha,\beta}\alpha-\beta)\vert_{S_i}$ for $i = 1,\dots, r$.
    \item Suppose the hypotheses of Theorem \ref{main thm 1 intro} (3) hold. Then the J-equation is solvable on $(X,\alpha,\beta)$ if and only if for every one of the finitely many $p$-dimensional irreducible components $V$ of the non-Kähler locus of $\tau_p$, and all $p = 1,2,\dots,n-1$, we have
$$
\mu_{\alpha,\beta} > \mu_{\alpha,\beta}(V).
$$
\end{enumerate}
\end{theorem}

\noindent Second, Theorem \ref{main thm 1 intro} provides a first result in the direction of wall-chamber decompositions for PDE, formulated as follows for the J-equation: 

\begin{theorem}\label{cor wall chamb dec intro}
Let $\mathcal{S}_+(X,J)$ be the set comprising pairs of K\"ahler classes $(\alpha, \beta)$ such
that $\mu_{\alpha,\beta}\alpha - p\beta$ is a $(p + 1)$-modified Kähler class for
$p = 1, . . . , dim X - 1$, and set
$$
\mathcal{S}_+(X,J)^{\mathrm{Stab}} := \{(\alpha, \beta) \in \mathcal{S}_+(X, J) : (X, \alpha, \beta) \; \textrm{is} \; \textrm{J-stable}.\}.
$$
Then, the boundary
$
\partial \mathcal{S}_+(X,J)^{\mathrm{Stab}}
$
is a locally finite union of smooth
submanifolds of codimension one in $\mathcal{S}_+(X, J)$. Each such submanifold is
cut out by a destabilising subvariety $V$, that is by a condition of the form
$$
\mu_{\alpha,\beta} - \mu_{\alpha,\beta}(V) = 0.
$$
\end{theorem}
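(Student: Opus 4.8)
The plan is to argue locally: around an arbitrary point $q_0 = (\alpha_0,\beta_0) \in \mathcal{S}_+(X,J)$ I would produce a neighborhood meeting only finitely many of the \emph{walls} $W_V := \{(\alpha,\beta) : \mu_{\alpha,\beta} = \mu_{\alpha,\beta}(V)\}$, and then patch these local pictures together. Throughout write $f_V(\alpha,\beta) := \mu_{\alpha,\beta} - \mu_{\alpha,\beta}(V)$, so that $W_V = \{f_V = 0\}$ and $V$ destabilizes precisely when $f_V \le 0$. Note first that $\mathcal{S}_+(X,J)$ is open, since its defining conditions ask that $\alpha,\beta$ be Kähler and that each $\mu_{\alpha,\beta}\alpha - p\beta$ lie in the (open) $(p+1)$-modified Kähler cone, all of which are open conditions depending continuously on $(\alpha,\beta)$. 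Moreover each $f_V$ is real-analytic on the locus where $\alpha$ is Kähler, being a ratio of intersection-number polynomials whose denominator $\alpha^{\dim V}\cdot[V]$ is strictly positive there. The two things left to establish are thus local finiteness of the collection $\{W_V\}$, and the smoothness and codimension of each wall.

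To bound the relevant walls near $q_0$, I would exploit the freedom in the parameter $\lambda$ of Theorem \ref{main thm 1 intro}. By openness of the modified Kähler cones there is a small $\lambda > 0$ for which $(\mu - (n - p + \varepsilon)\lambda)\alpha_0 - p\beta_0$ is still $(p+1)$-modified Kähler for every $p$ and some $\varepsilon>0$. Theorem \ref{main thm 1 intro} then furnishes a finite list $V_1,\dots,V_N$ containing every proper $V$ with $\frac{1}{\dim X - \dim V}\big(\mu_{\alpha_0,\beta_0} - \mu_{\alpha_0,\beta_0}(V)\big) \le \lambda$. Consequently every remaining proper $V$ satisfies $f_V(q_0) > (\dim X - \dim V)\lambda \ge \lambda > 0$. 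It remains to promote this pointwise gap into a uniform gap over a full neighborhood of $q_0$.

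The crux — and the step I expect to be the main obstacle — is a uniform continuity estimate for the slopes: there should exist a neighborhood $U \ni q_0$ and a constant $C$ \emph{independent of $V$} such that $|f_V(q) - f_V(q_0)| \le C\,\|q - q_0\|$ for all proper $V$ and all $q \in U$. The difficulty is that the intersection numbers $\alpha_0^{\dim V}\cdot[V]$ are unbounded as $V$ ranges over all subvarieties, so the estimate cannot come from naive bounds on numerator and denominator separately; rather it must use the scale-invariance of the slope in $[V]$ together with positivity in the Kähler cone. Concretely, for a nearby class I would write $\alpha = \alpha_0 + a$, $\beta = \beta_0 + b$ and, choosing $\|a\|,\|b\|$ small, arrange that $a,b$ and $\delta\alpha_0 \mp a$, $\delta\alpha_0 \mp b$ all be nef for small $\delta$; then domination of mixed intersection numbers (for instance $0 \le \alpha_0^{\dim V - 1}\cdot b \cdot [V] \le \delta\,\alpha_0^{\dim V}\cdot[V]$, and likewise the comparison of $\alpha^{j}\cdot(\cdots)\cdot[V]$ with $\alpha_0^{j}\cdot(\cdots)\cdot[V]$ up to factors $(1\pm\delta)^{j}$) bounds every term of $\mu_{\alpha,\beta}(V)$ by a controlled multiple of its denominator, with constants depending on $\delta$ but not on $V$. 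Granting this estimate, I would shrink $U$ so that $C\|q - q_0\| < \lambda/2$ there; then $f_V(q) > \lambda/2 > 0$ for all $q \in U$ and all $V \notin \{V_1,\dots,V_N\}$, so the only walls meeting $U$ are $W_{V_1},\dots,W_{V_N}$. In particular the stable locus is open and its boundary is locally contained in this finite union.

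Finally I would check that each $W_V$ is a regular level set, which simultaneously yields smoothness and codimension one. Differentiating along the ray $\beta_0 \mapsto \beta_0 + t\alpha_0$ gives, writing $n = \dim X$, $\frac{d}{dt}\big|_{t=0} f_V = n\frac{\alpha^{n-1}\cdot\alpha}{\alpha^{n}} - (\dim V)\frac{\alpha^{\dim V - 1}\cdot\alpha\cdot[V]}{\alpha^{\dim V}\cdot[V]} = n - \dim V \ne 0$ for every proper $V$, so $df_V$ never vanishes on $\mathcal{S}_+(X,J)$ and $0$ is a regular value. By the implicit function theorem each $W_V$ is a smooth codimension-one submanifold. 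Assembling over the finite list, $\partial\mathcal{S}_+(X,J)^{\mathrm{Stab}} \cap U = \{q \in U : \min_{i} f_{V_i}(q) = 0\}$ is covered by the finitely many smooth hypersurfaces $W_{V_1},\dots,W_{V_N}$, each cut out by a destabilizing condition $\mu_{\alpha,\beta} - \mu_{\alpha,\beta}(V_i) = 0$ — indeed on the side of $W_{V_i}$ where $f_{V_i} < 0$ the subvariety $V_i$ is genuinely destabilizing. Patching these local descriptions over $\mathcal{S}_+(X,J)$ produces the asserted locally finite union of smooth codimension-one submanifolds, completing the argument.
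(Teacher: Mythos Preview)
Your argument is correct, but it takes a genuinely different route from the paper. The paper obtains local finiteness by a convexity trick (Lemma~\ref{lem:wallchamberlemma}): around the factor class $\tau_p=\mu_{\alpha,\beta}\alpha-p\beta$ one picks finitely many $(p+1)$-modified K\"ahler classes $\tau_1,\dots,\tau_\ell$ whose open convex hull contains $\tau_p$; then for any $\tau'$ in this hull and any $p$-dimensional $V$ with $\int_V\tau'\cdot[\omega_1]\cdots[\omega_{p-1}]\le 0$, convexity forces the same inequality for some $\tau_i$, whence $V\subseteq E_{nK}(\tau_i)$ by Lemma~\ref{lem:main lemma}. Since $(\alpha,\beta)\mapsto\tau_p(\alpha,\beta)$ is continuous, this immediately gives a neighbourhood of $(\alpha_0,\beta_0)$ in which only finitely many walls can appear, with no need for any uniform estimate on the slopes $\mu_{\alpha,\beta}(V)$.

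Your approach instead invokes the $\lambda$-gap of Theorem~\ref{main thm 1 intro} at the basepoint and then propagates the gap to a neighbourhood via a uniform Lipschitz bound $|f_V(q)-f_V(q_0)|\le C\|q-q_0\|$ independent of $V$. Your sketch of this bound via nef domination is essentially right: writing $\pm a,\pm b\le\delta\alpha_0$ (your phrase ``$a,b$ \dots\ nef'' is a slip, as that would not give a full neighbourhood) one gets $(1-\delta)^p\alpha_0^p\cdot[V]\le\alpha^p\cdot[V]\le(1+\delta)^p\alpha_0^p\cdot[V]$ and analogous bounds on the numerator, yielding $|\mu_{\alpha,\beta}(V)-\mu_{\alpha_0,\beta_0}(V)|\le C(\delta)(1+\mu_{\alpha_0,\beta_0}(V))$. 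The residual dependence on $\mu_{\alpha_0,\beta_0}(V)$ is harmless because $\beta_0\le M\alpha_0$ for some $M>0$ gives the uniform bound $\mu_{\alpha_0,\beta_0}(V)\le nM$, a point worth making explicit. What your route buys is a quantitative gap statement on the neighbourhood; what the paper's route buys is a shorter argument that sidesteps the uniform estimate entirely and generalises verbatim to the gMA setting via the factor classes. Your regularity argument for the walls, differentiating along $\beta\mapsto\beta+t\alpha$ to get $\partial_t f_V=n-\dim V\neq 0$, matches the paper's.
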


\noindent Finally, we illustrate that the above positivity conditions in fact occur naturally in many situations, thanks to the following sufficient condition for a pair $(\alpha,\beta)$ to be an element of $\mathcal{P}_X$:

\begin{theorem}
Fix $X$ a compact K\"ahler manifold and $\alpha,\beta$ K\"ahler classes on $X$. Then, the conclusions of Theorems \ref{main thm 1 intro} and Corollaries \ref{Cor finite test intro} and \ref{cor wall chamb dec intro} above all hold, if there exists a nef class $\eta \in H^{1,1}(X,\mathbb{R}) \setminus \mathcal K_X$ such that  $\alpha = (1-r)\eta + r\beta$ for some $r \in (0,1)$, and $\eta$ is $3$-modified K\"ahler\footnote{Note that this automatically implies that $\eta$ is big, and thus $\eta^n > 0$.}.
\end{theorem}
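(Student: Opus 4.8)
The plan is to reduce the statement to a purely cohomological verification: it suffices to show that the hypothesis forces each
$\tau_p(\alpha,\beta) = \mu_{\alpha,\beta}\alpha - p\beta$ to be $(p+1)$-modified K\"ahler for $p = 1,\dots,\dim X - 1$, since once the positivity inputs of the cited results are in place the conclusions of Theorem \ref{main thm 1 intro} and Corollaries \ref{Cor finite test intro} and \ref{cor wall chamb dec intro} apply directly. First I would record the two consequences of the hypothesis used throughout: since $\eta$ is $3$-modified K\"ahler it is in particular big (footnote), so $\eta^n>0$ and $\mu := \mu_{\alpha,\beta} = n\,\alpha^{n-1}\!\cdot\beta/\alpha^n>0$ is well defined and strictly positive; and since $\eta$ is nef its generic Lelong numbers $\nu(\eta,V)$ vanish along every subvariety, so adding non-negative multiples of $\eta$ never introduces singularities.

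The key computation is to substitute $\alpha = (1-r)\eta + r\beta$ into $\tau_p$ and use $\alpha^n = (1-r)(\alpha^{n-1}\!\cdot\eta) + r(\alpha^{n-1}\!\cdot\beta)$, which yields, with $n:=\dim X$,
$$
\tau_p(\alpha,\beta) = (1-r)\mu\,\eta + (\mu r - p)\,\beta, \qquad \mu r - p = \frac{(n-p)\,r\,(\alpha^{n-1}\!\cdot\beta)\; - \;p\,(1-r)\,(\alpha^{n-1}\!\cdot\eta)}{\alpha^n}.
$$
When $\mu r \geq p$ this exhibits $\tau_p$ as a non-negative combination of $\eta$ and the K\"ahler class $\beta$; as the modified K\"ahler cones are convex, open and contain the K\"ahler cone, and $\eta$ supplies the required modified-K\"ahler index in this range, such $\tau_p$ lie in the cone demanded. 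In fact $\tau_p$ is even K\"ahler as soon as $\mu r > p$, being the sum of the nef class $(1-r)\mu\,\eta$ and the K\"ahler class $(\mu r - p)\beta$, so those $p$ are handled outright; for the top values of $p$ the $(p+1)$-modified K\"ahler condition degenerates to bigness, which follows from nef-ness and bigness of $\eta$. I would then carry out the elementary estimate of the intersection numbers $\alpha^{n-1}\!\cdot\eta$, $\alpha^{n-1}\!\cdot\beta$, $\alpha^n$ to pin down exactly the range of $p$ with $\mu r \geq p$ and to check that the modified-K\"ahler index of $\eta$ is calibrated to meet the threshold $p+1$ for each $p$.

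The main obstacle is the complementary regime $\mu r < p$, where the identity forces a \emph{subtraction}, $\tau_p = (1-r)\mu\,\eta - (p-\mu r)\beta$: removing a K\"ahler class can push a modified K\"ahler class out of its cone (indeed it can create positive generic Lelong numbers along the very subvarieties one must control), so the convex-combination argument breaks down and the full strength of the hypothesis — that $\eta$ lies in the \emph{open} modified K\"ahler cone, not merely that it is nef and big — must be used. Concretely I would normalise to $\tau_p = (1-r)\mu\bigl(\eta - \tfrac{p-\mu r}{(1-r)\mu}\,\beta\bigr)$ and argue that the subtraction coefficient $\tfrac{p-\mu r}{(1-r)\mu}$ stays below the largest $\delta$ for which $\eta - \delta\beta$ remains $(p+1)$-modified K\"ahler; openness of the cone guarantees a positive such threshold, and I expect the decisive and most delicate step to be the quantitative comparison of this threshold against the intersection-number expression for $\tfrac{p-\mu r}{(1-r)\mu}$, together with the index bookkeeping that matches $\eta$'s fixed modified-K\"ahler index against the varying requirement $p+1$ (using that higher-dimensional test subvarieties impose successively weaker constraints). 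Finally, the hypothesis $\eta \notin \mathcal{K}_X$ plays no positivity role but ensures the conclusion is non-vacuous, forcing the configuration onto the boundary of the stable locus so that genuine destabilising subvarieties and non-trivial walls occur; with the positivity in hand, invoking Theorem \ref{main thm 1 intro} and Corollaries \ref{Cor finite test intro} and \ref{cor wall chamb dec intro} completes the argument.
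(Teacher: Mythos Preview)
There is a genuine gap. Your plan is to verify that each $\tau_p=\mu\alpha-p\beta$ is $(p{+}1)$-modified K\"ahler directly, and you correctly isolate the dangerous regime $\mu r<p$, where $\tau_p=(1-r)\mu\bigl(\eta-\tfrac{p-\mu r}{(1-r)\mu}\beta\bigr)$. But your proposed resolution, bounding the subtraction coefficient by the openness threshold of the modified K\"ahler cone, cannot work: as $r\to 0$ one has $\mu r\to 0$ while $(1-r)\mu\to n\,\eta^{n-1}\!\cdot\beta/\eta^n$, so the coefficient tends to the \emph{fixed positive} number $p\,\eta^n/(n\,\eta^{n-1}\!\cdot\beta)$. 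There is no reason this lies below the geometric threshold for $\eta-\delta\beta$ to remain $(p{+}1)$-modified K\"ahler, and no intersection-theoretic computation can supply that bound, since the threshold depends on the singular structure of currents in $\eta$, not on intersection numbers.

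The paper does not attempt to put $\tau_p$ itself into the cones. It works instead with the shifted classes $\gamma(p,\alpha,\beta)=(\mu-(n-p)\Delta^{\mathrm{pp}}_\beta(\alpha))\alpha-p\beta$, which are the relevant inputs to part~(4) of Theorem~\ref{main thm 1 intro} (and hence to the Corollaries). The decisive ingredient you are missing is the double inequality (Lemma~\ref{Lemma double ineq}), $\Delta^{\mathrm{pp}}_\beta(\alpha_R)\le\tfrac1{n-1}(\mu-R^{-1})$, proved via linearity of $\Delta^{\mathrm{pp}}$ in $\beta$ together with the implication ``J-stable $\Rightarrow\mu\alpha-\beta$ K\"ahler''. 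Since $\eta\notin\mathcal K_X$ forces $\Delta^{\mathrm{pp}}\to-\infty$ as $R\to 0$, the shift $-(n-p)\Delta^{\mathrm{pp}}\alpha$ is a large nef boost, and one obtains $\gamma(p)\ge(1-R)f(p,R)\bigl(\eta-\tfrac{p-1}{n-1}\tfrac{C_R}{f(p,R)}\beta\bigr)$ with $f(p,R)\sim\tfrac{n-p}{(n-1)R}$; now the subtraction coefficient is $O(R)$ and openness applies. Thus the condition $\eta\notin\mathcal K_X$ is not merely for non-vacuousness as you suggest: it is what produces the instability that makes $\gamma(p)$ strictly more positive than $\tau_p$, and without that boost the argument collapses. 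Note also that even via this route the paper only secures the hypotheses for $r>0$ small enough (Proposition~\ref{Prop suff cond}).
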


\begin{remark}
\emph{Such manifolds do exist, in any dimension, thus giving first examples of wall-chamber decomposition phenomena for generalised Monge-Amp\`ere equations beyond the surface case. In particular, this is true for a family of K\"ahler $n$-folds $X=\mathbb P( \mathcal O_Y \oplus (L^\vee)^{\otimes a_1} \oplus \dots \oplus (L^\vee)^{\otimes a_{n-1}})$ studied by Wu \cite{Wu}, see Section \ref{Subsection ex Cutkosky} for notation and details. } 
\end{remark}

\noindent The above theorem is particularly useful if $X$ is a threefold, when the above condition becomes $\eta \in \mathcal{M}_3\mathcal{K}_X = \mathcal{B}_X$, the big cone of $X$; something which is often checkable in practice. As a consequence any threefold where $\mathcal{B}_X \neq \mathcal{K}_X$ carries many pairs of classes $(\alpha,\beta)$ such that $\mathrm{Dest}_{\alpha,\beta}$ is a finite set. 

More generally, if $\eta \in \mathcal{M}_k\mathcal{K}_X$ for some $k$, $3 \leq k \leq n$, then $\mathrm{Dest}_{\alpha,\beta}$ contains no subvarieties of dimension $k$ or higher.

The above in particular shows that our main theorems give non-trivial results on the blow-up of \emph{any} compact K\"ahler threefold, 
for all K\"ahler classes $\alpha$ in an open \emph{subset} of the K\"ahler cone, in the following sense.

\begin{theorem} \label{Cor blowup}
Let $X$ be any compact K\"ahler threefold, and let $\pi: Y \rightarrow X$ be the blow up of $X$ at a point $x \in X$. If $\alpha,\beta$ are K\"ahler classes on $X$, then consider the nef and big classes $\pi^*\alpha, \pi^*\beta$ on $Y$ and write $\alpha_{\epsilon} := \pi^*\alpha - \epsilon[E]$, and $\beta_{\epsilon} := \pi^*\beta - \epsilon[E]$. If moreover  $0 < \epsilon <<  \epsilon'$ are both small enough, then the set
$
\mathrm{Dest}_{\alpha_{\epsilon},\beta_{\epsilon'}}^{\mathrm{opt}}
$
of optimally destabilising subvarieties is finite, and any divisor component has strictly positive generic Lelong number 
$$
\nu((\mu - \Delta^\mathrm{pp}_{\beta_{\epsilon'}}(\alpha_{\epsilon}))\alpha_{\epsilon} - 2\beta_{\epsilon'},D) > 0
$$
If we have strict inequality in Lemma \ref{Lemma double ineq} then any optimal destabiliser is a divisor.
\end{theorem}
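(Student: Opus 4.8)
The plan is to deduce the statement from Theorem~\ref{main thm 1 intro}(4) applied to the threefold $Y$, for which the only positivity input to verify is that the single class
$$
\tau := (\mu - \Delta^{\mathrm{pp}}_{\beta_{\epsilon'}}(\alpha_\epsilon))\alpha_\epsilon - 2\beta_{\epsilon'}
$$
is $3$-modified K\"ahler, i.e.\ big, since on a threefold $\mathcal{M}_3\mathcal{K}_Y = \mathcal{B}_Y$; here $\mu = \mu_{\alpha_\epsilon,\beta_{\epsilon'}}$. First I would record that $\alpha_\epsilon = \pi^*\alpha - \epsilon[E]$ and $\beta_{\epsilon'} = \pi^*\beta - \epsilon'[E]$ are genuine K\"ahler classes on $Y$ for all sufficiently small $\epsilon,\epsilon' > 0$, by the standard description of the K\"ahler cone of the blow-up of a point.

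The heart of the argument is a bigness computation exploiting the exceptional divisor $E \cong \mathbb{P}^2$. Writing $H$ for the hyperplane class on $E$ and using $\pi^*\alpha|_E = \pi^*\beta|_E = 0$ together with $[E]|_E = -H$, one finds $\alpha_\epsilon|_E = \epsilon H$ and $\beta_{\epsilon'}|_E = \epsilon' H$, whence
$$
\mu_{\alpha_\epsilon,\beta_{\epsilon'}}(E) = 2\,\frac{\alpha_\epsilon|_E \cdot \beta_{\epsilon'}|_E}{\alpha_\epsilon|_E^2} = \frac{2\epsilon'}{\epsilon},
$$
while $\mu = \mu_{\alpha_\epsilon,\beta_{\epsilon'}} \to \mu_{\alpha,\beta}$ stays bounded as $\epsilon,\epsilon'\to 0$. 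Since $\Delta^{\mathrm{pp}}_{\beta_{\epsilon'}}(\alpha_\epsilon)$ is an infimum over all subvarieties, testing against $V = E$ (where $\dim Y - \dim V = 1$) gives the crucial lower bound $\mu - \Delta^{\mathrm{pp}}_{\beta_{\epsilon'}}(\alpha_\epsilon) \ge \mu_{\alpha_\epsilon,\beta_{\epsilon'}}(E) = 2\epsilon'/\epsilon$, \emph{independently of the precise value of the threshold}. The key algebraic observation is then the cancellation
$$
\frac{2\epsilon'}{\epsilon}\,\alpha_\epsilon - 2\beta_{\epsilon'} = 2\,\pi^*\!\left(\frac{\epsilon'}{\epsilon}\alpha - \beta\right),
$$
and for $\epsilon \ll \epsilon'$ the class $\frac{\epsilon'}{\epsilon}\alpha - \beta$ is K\"ahler on $X$, so its pullback is nef with $(\pi^*(\cdots))^3 > 0$, hence big on $Y$. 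Writing $\tau = \big(\mu - \Delta^{\mathrm{pp}}_{\beta_{\epsilon'}}(\alpha_\epsilon) - \tfrac{2\epsilon'}{\epsilon}\big)\alpha_\epsilon + 2\pi^*(\tfrac{\epsilon'}{\epsilon}\alpha - \beta)$ then exhibits $\tau$ as a nonnegative multiple of the K\"ahler class $\alpha_\epsilon$ plus a big class, hence $\tau$ is big.

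With bigness in hand, Theorem~\ref{main thm 1 intro}(4) immediately yields finiteness of $\mathrm{Dest}_{\alpha_\epsilon,\beta_{\epsilon'}}^{\mathrm{opt}}$. For the divisor components I would invoke the divisorial Boucksom--Zariski decomposition of the big class $\tau$: its negative part is supported on the finitely many prime divisors $D$ with $\nu(\tau, D) > 0$, and these are precisely the divisors that can occur as components of optimal destabilizers, which gives the asserted strict positivity of the generic Lelong number. Finally, the statement that \emph{every} optimal destabilizer is a divisor would follow by feeding the strict form of Lemma~\ref{Lemma double ineq} into the infimum defining $\Delta^{\mathrm{pp}}_{\beta_{\epsilon'}}(\alpha_\epsilon)$: strictness there forces the optimal value to be attained in codimension one, ruling out curve-type optimal destabilizers.

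The main obstacle I anticipate is not the finiteness itself, which is essentially automatic once bigness is established, but rather securing bigness in the correct regime $0 < \epsilon \ll \epsilon'$: one must fix $\epsilon'$ small enough that $\beta_{\epsilon'}$ is K\"ahler, and only then take $\epsilon$ much smaller, both so that $\alpha_\epsilon$ is K\"ahler and so that the ratio $\epsilon'/\epsilon$ is large enough to make $\frac{\epsilon'}{\epsilon}\alpha - \beta$ K\"ahler on $X$. The point that makes this robust is that the lower bound $\mu - \Delta^{\mathrm{pp}}_{\beta_{\epsilon'}}(\alpha_\epsilon) \ge 2\epsilon'/\epsilon$ comes for free from testing against $E$, so one never needs to compute the threshold exactly; the only genuinely delicate input beyond this is the precise form of Lemma~\ref{Lemma double ineq} required for the last assertion.
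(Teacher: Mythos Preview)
Your argument is correct and arrives at the same endpoint as the paper's---namely, bigness of $\gamma(2,\alpha_\epsilon,\beta_{\epsilon'}) = (\mu - \Delta^{\mathrm{pp}}_{\beta_{\epsilon'}}(\alpha_\epsilon))\alpha_\epsilon - 2\beta_{\epsilon'}$, followed by an appeal to Theorem~\ref{Thm main higher dim optimal}---but by a more direct route. The paper instead computes the projection $\eta = \mathrm{proj}_{\alpha_\epsilon}(\beta_{\epsilon'})$, shows it equals a positive multiple of $\pi^*(\lambda\alpha - \beta)$ (with $\lambda = \epsilon'/\epsilon$), verifies this is nef and big, and then invokes Proposition~\ref{Prop suff cond}, which in turn feeds Lemma~\ref{Lemma double ineq} into an estimate for $\gamma(p)$. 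Your approach bypasses Proposition~\ref{Prop suff cond} entirely: you bound $\Delta^{\mathrm{pp}}$ from below by the single explicit test against the exceptional divisor $E$, and then the algebraic cancellation $\tfrac{2\epsilon'}{\epsilon}\alpha_\epsilon - 2\beta_{\epsilon'} = 2\pi^*(\tfrac{\epsilon'}{\epsilon}\alpha - \beta)$ immediately exhibits $\tau$ as (nef $+$ big). This is more elementary and transparent in the threefold case, and it makes explicit that the destabilization by $E$ is what drives the argument; the paper's route through Proposition~\ref{Prop suff cond} is less hands-on here but has the advantage of being formulated for arbitrary dimension. Your treatment of the Lelong number claim (via the divisorial Boucksom--Zariski decomposition) and of the last assertion (strict inequality in Lemma~\ref{Lemma double ineq} ruling out curve destabilizers) matches the paper's reasoning.
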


\subsection{Acknowledgements}
We are grateful to Ved Datar, Antonio Trusiani, Jacopo Stoppa, Sivaram Petchimuthu and Masafumi Hattori for helpful discussions. The authors would also like to thank the Isaac Newton Institute for Mathematical Sciences, Cambridge, for support and hospitality during the programme ``New equivariant methods in algebraic and differential geometry'' where work on this paper was undertaken. This work was supported by EPSRC grant no EP/R014604/1. The first named author acknowledges support from grant JCSMK24-0084 from the Kempe Foundation. The second named author was supported by an AIAS-COFUND II Marie Curie Grant, and Villum Young Investigator Grant, project no. 60786. We also thank the anonymous referee for several helpful suggestions.

\bigskip

\section{Background on positive cones and non-K\"ahler loci} \label{Section prelim}

\subsection{Currents and positive cones} 

Recall that given a pseudoeffective class $\tau$ on a compact Kähler manifold $(X,\omega)$, we can define the \emph{minimal multiplicity} $\nu(\tau,x)$ of $\tau$ at $x$ by \[
\nu(\tau,x):= \sup_{\varepsilon > 0} \nu(T_{\min,\varepsilon},x)
\]
where $T_{\min,\varepsilon}$ is a closed $(1,1)$ \emph{current of minimal singularities} in $\tau$ that satisfies $T_{\min,\varepsilon} \geq -\varepsilon\omega$. This definition is independent of the choice of the Kähler metric $\omega$. For an irreducible analytic subvariety $V$ of $X$, a pseudoeffective class $\tau$ on $X$ and a positive current $T$, we denote 
\[
\nu(\tau,V) := \inf_{x\in V} \nu(\tau,x), \quad \textrm{and } \quad \nu(T,V) :=\inf_{x\in V} \nu(T,V).
\]
See \cite[Section 2.8]{Boucksomthesis} for precise definitions. 

The following definition is due to \cite{Wu} and generalises the notion of a \emph{modified nef class} defined by Boucksom in \cite{Boucksomthesis}.

\begin{definition}[nef and Kähler in codimension $q$]  Let $X$ be a compact Kähler manifold of dimension $n$. A pseudoeffective $(1,1)$ class $\tau$ on $X$ is said to be \emph{nef in codimension $q$} or \emph{$(n-q)$-modified nef} if the minimal multiplicity $\nu(\tau,Z) = 0$ for any irreducible analytic subvariety of codimension $k \leq q$. These classes comprise a closed cone $\mathcal M^q\mathcal{N} = \mathcal M_{n-q}\mathcal N$, which we shall call the \emph{$(n-q)$-modified nef cone} and its interior $\mathcal M^q\mathcal K =\mathcal M_{n-q}\mathcal K$ the \emph{$(n-q)$-modified Kähler cone}. A class $\tau \in \mathcal M_p \mathcal K$ is called a \emph{$p$-modified Kähler class}.
\end{definition}
\begin{remark}
    We have an obvious inclusion of cones 
    \[
    \mathcal N = \mathcal M_0 \mathcal N \subseteq \mathcal M_1 \mathcal N \subseteq \ldots \subseteq \mathcal M_n \mathcal N = \mathcal E
    \]
    and similarly for $\mathcal M_i \mathcal K$. In particular, note that $\mathcal M_n \mathcal K = \mathcal B$ is the big cone.
    In fact, as a corollary of a result of Pa\u un (see Lemma 2 in \cite{Wu}), we also have $\mathcal M^{n-1}\mathcal N = \mathcal M_1 \mathcal N = \mathcal N$. In \cite{Boucksomthesis}, the cone $\mathcal M_{n-1}\mathcal N$ is called the \emph{modified nef cone}.
\end{remark}
Recall that the \emph{non-Kähler locus} $E_{nK}(\tau)$ of a big class $\tau$ is given 
\[
E_{nK}(\tau) := \bigcap_{\tilde T \in \tau} E_+(\tilde T)
\]
where the intersection ranges over all strictly positive closed currents $\tilde T$ representing $\tau$. Here $E_+(\tilde T)$ is the set comprising $x\in X$ such that $\nu(\tilde T,x)>0$. The set $E_{nK}(\tau)$ is always a closed analytic subset of $X$ whenever $\tau$ is a big class on $X$.

The following is a straightforward consequence of standard results and the above definitions.
\begin{lemma}\label{lem:EnK}
    Let $X$ be a compact Kähler manifold of dimension $n$ and let $\tau \in \mathcal M_n \mathcal K$ be a big class on $X$. Then, $\tau\in\mathcal M_p \mathcal K$ on $X$ if and only if every irreducible component of $E_{nK}(\tau)$ has dimension strictly less than $p$.
    \begin{proof}
       Fix a Kähler metric $\omega$ on $X$. First suppose $\tau \in \mathcal M_p\mathcal K$. Then, we can find $\varepsilon > 0$ so small that $\tau - 2\varepsilon [\omega] \in  \mathcal M_p \mathcal N$. Thus, for any $V\subseteq X$ of dimension at least $p$, we have by definition 
        \[
        \nu(\tau - 2\varepsilon[\omega],V) = 0.
        \]
        In particular, we have 
        \[
        \nu(T_{\min,\varepsilon},V)= 0
        \]
        where $T_{\min,\varepsilon}$ is a current of minimal singularities in $\tau - 2\varepsilon[\omega]$ satisfying $T_{\min,\varepsilon}\geq -\varepsilon\omega$. But then, the current
        \[
        T:= T_{\min,\varepsilon}+2\varepsilon\omega
        \]
        lies in the class $\tau$ and satisfies $T \geq \varepsilon \omega$. Thus, $T$ is a Kähler current in $\tau$ and satisfies $\nu(T,V) = 0$ for all $V\subseteq X$ of dimension at least $p$. Since $E_{nK}(\tau) \subseteq E_+(T)$, we see that no such $V$ can be contained in $E_{nK}(\tau)$. 
        
        Conversely, suppose that $E_{nK}(\tau)$ does not contain any irreducible analytic subset of dimension greater than or equal to $p$. Then, because $\tau$ is big, by \cite[Theorem 3.17(ii)]{Boucksomthesis} there exists a Kähler current $T\in \tau$ such that $E_+(T) = E_{nK}(\tau)$. Because $T$ is a Kähler current, we can find $\varepsilon > 0$ so that $T\geq 2\varepsilon\omega$ and $\tau - \varepsilon[\omega]$ is a big class. In particular, $\nu(T-\varepsilon\omega,V) = 0$ for all $V\subseteq X$ of dimension at least $p$. Let $T_{\min}$ be any current of minimal singularities in $\tau -\varepsilon[\omega]$ satisfying $T_{\min} \geq 0$. Then, by minimality, we have 
        \[
        \nu(T_{\min} ,x) \leq \nu(T-\varepsilon\omega,x)
        \]
        for all $x\in X$. Thus, $\nu(T_{\min},V) = 0$ for all $V$ of dimension at least $p$. Now, by \cite[Proposition 3.6(ii)]{Boucksomthesis}, we have $\nu(T_{\min},x) = \nu(\tau-\varepsilon[\omega],x)$, and so we obtain $\nu(\tau-\varepsilon[\omega],V) = 0$ whenever $V$ is of dimension at least $p$. Thus, $\tau - \varepsilon[\omega] \in \mathcal M_{p}\mathcal N$. Since this holds for an arbitrary Kähler class $[\omega]$, we obtain that $\tau \in \mathcal M_p \mathcal K$ is a $p$-modified Kähler class.
    \end{proof}
\end{lemma}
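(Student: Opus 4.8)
The plan is to prove the equivalence by translating between the two ways positivity is measured here: the condition $\tau \in \mathcal M_p \mathcal K$ is expressed through the minimal multiplicities $\nu(\tau,V)$ of the cohomology \emph{class}, which (under the dimension/codimension dictionary $\mathcal M_p\mathcal N = \mathcal M^{n-p}\mathcal N$) must vanish along all $V$ with $\dim V \geq p$; by contrast $E_{nK}(\tau)$ is assembled from the positivity sets $E_+(T) = \{x : \nu(T,x) > 0\}$ of Kähler \emph{currents} $T \in \tau$. The bridge is twofold: first, adding or subtracting a small multiple of a fixed auxiliary Kähler form $\omega$ (recall $\nu(\tau,x)$ is independent of $\omega$) converts statements about currents of minimal singularities into statements about genuine Kähler currents and back; second, by Boucksom's results the minimal multiplicity of a perturbed class is computed by a current of minimal singularities (\cite[Proposition 3.6(ii)]{Boucksomthesis}), while a big class admits a single Kähler current whose positivity locus realizes $E_{nK}(\tau)$ (\cite[Theorem 3.17(ii)]{Boucksomthesis}).

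For the forward implication I would exploit that $\mathcal M_p \mathcal K$ is open: choosing $\varepsilon > 0$ small enough, $\tau - 2\varepsilon[\omega]$ is still $p$-modified nef, so $\nu(\tau - 2\varepsilon[\omega], V) = 0$ for every $V$ with $\dim V \geq p$. Taking a current of minimal singularities $T_{\min,\varepsilon} \geq -\varepsilon\omega$ in this perturbed class, its generic Lelong numbers along such $V$ vanish; adding back $2\varepsilon\omega$ then produces $T := T_{\min,\varepsilon} + 2\varepsilon\omega \in \tau$ with $T \geq \varepsilon\omega$, i.e. a genuine Kähler current in $\tau$ whose generic Lelong number along every $V$ of dimension $\geq p$ still vanishes. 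Since $E_{nK}(\tau) \subseteq E_+(T)$ by definition of the non-Kähler locus, no subvariety of dimension $\geq p$ can lie in $E_{nK}(\tau)$, which is exactly the claimed dimension bound on its components.

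For the converse I would invoke \cite[Theorem 3.17(ii)]{Boucksomthesis} to obtain a Kähler current $T \in \tau$ with $E_+(T) = E_{nK}(\tau)$. If every component of $E_{nK}(\tau)$ has dimension $< p$, then any $V$ with $\dim V \geq p$ satisfies $V \not\subseteq E_+(T)$, so $V$ contains a point of zero Lelong number and hence $\nu(T,V) = \inf_{x \in V} \nu(T,x) = 0$. Shrinking to $T - \varepsilon\omega$ (with $\varepsilon$ small enough that $\tau - \varepsilon[\omega]$ stays big and $T - \varepsilon\omega \geq 0$) preserves this vanishing; comparing with a current of minimal singularities $T_{\min} \geq 0$ in $\tau - \varepsilon[\omega]$, whose Lelong numbers are pointwise dominated by those of $T - \varepsilon\omega$ by minimality and which compute $\nu(\tau - \varepsilon[\omega], \cdot)$ by \cite[Proposition 3.6(ii)]{Boucksomthesis}, yields $\nu(\tau - \varepsilon[\omega], V) = 0$ for all $V$ of dimension $\geq p$, so $\tau - \varepsilon[\omega] \in \mathcal M_p \mathcal N$. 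As $\omega$ was arbitrary, $\tau \in \mathcal M_p \mathcal K$. I expect the delicate step to be precisely this last transfer in the converse: one only controls $E_+(T)$ as a \emph{set}, so the argument must pass from set-theoretic non-containment to the vanishing of the \emph{class} invariant $\nu(\tau - \varepsilon[\omega], V)$, which is exactly where the minimality comparison and Boucksom's identification of minimal multiplicity with the Lelong numbers of minimal-singularity currents are essential, with the main bookkeeping being to keep every perturbation inside the big (respectively Kähler-current) regime.
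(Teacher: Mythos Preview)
Your proposal is correct and follows essentially the same approach as the paper's proof: both directions use the identical perturbation by $\pm\varepsilon\omega$ to pass between class-level minimal multiplicities and Lelong numbers of genuine K\"ahler currents, invoking the same two results of Boucksom (\cite[Theorem 3.17(ii)]{Boucksomthesis} for a K\"ahler current realizing $E_{nK}(\tau)$, and \cite[Proposition 3.6(ii)]{Boucksomthesis} to identify $\nu(\tau-\varepsilon[\omega],x)$ with the Lelong numbers of a minimal-singularity current). Your identification of the delicate step in the converse---transferring from set-theoretic non-containment in $E_+(T)$ to the vanishing of the class invariant via the minimality comparison---matches exactly where the paper places the weight of the argument.
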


Throughout this paper, we shall use Lemma \ref{lem:EnK} in conjunction with the following straightforward consequence of results of Boucksom \cite{Boucksomthesis} and Demailly \cite{Demailly}. 

\begin{lemma}\label{lem:main lemma}
    Let $\tau$ be a big cohomology class on a compact Kähler manifold $X$, and $\omega_1, \omega_2,\ldots,\omega_{p-1}$ be smooth Kähler forms on $X$. Then, for any $p$-dimensional subvariety $V$ satisfying $V\not\subseteq E_{nK}(\tau)$, we have 
    $$
    \int_V \tau\cdot[\omega_1]\cdot[\omega_2]\cdot\ldots\cdot [\omega_{p-1}] > 0.
    $$
\end{lemma}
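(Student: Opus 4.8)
The plan is to use bigness of $\tau$ to produce a single K\"ahler current representing $\tau$ whose singular locus is exactly $E_{nK}(\tau)$, and then to restrict that current to $V$, where by hypothesis it still carries strict positivity. Concretely, since $\tau$ is big I would invoke Boucksom's theorem \cite[Theorem 3.17(ii)]{Boucksomthesis} (already used in the proof of Lemma \ref{lem:EnK}) to fix a K\"ahler current $T \in \tau$ with analytic singularities such that $E_+(T) = E_{nK}(\tau)$ and $T \geq \delta\omega$ for some K\"ahler form $\omega$ and some $\delta > 0$. For a current with analytic singularities the polar locus coincides with $E_+(T)$, so the hypothesis $V \not\subseteq E_{nK}(\tau) = E_+(T)$ says precisely that the generic point of $V$ avoids the singularities of $T$.

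I would then pass to a resolution to make the restriction rigorous. Let $\mu: \tilde V \to V$ be a resolution of singularities and set $f := \iota\circ\mu : \tilde V \to X$, where $\iota: V \hookrightarrow X$ is the inclusion, so that $f$ is a modification onto its image $V$. Since $f(\tilde V) = V$ is not contained in the polar locus of $T$, the pullback $S := f^* T$ is a well-defined closed positive $(1,1)$-current on $\tilde V$, it satisfies $\{S\} = f^*\tau$ in $H^{1,1}(\tilde V,\RR)$, and pulling back the inequality $T \geq \delta\omega$ gives $S \geq \delta\, f^*\omega$ as currents on $\tilde V$. This is the step I expect to be the crux: one must check that the pullback of the singular current $T$ by $f$ is well defined and commutes with taking cohomology classes, and this is exactly where both the hypothesis $V\not\subseteq E_{nK}(\tau)$ and the reduction to analytic singularities (via Demailly \cite{Demailly}) are needed.

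Finally I would assemble the estimate. The forms $f^*\omega_1,\dots,f^*\omega_{p-1}$ are smooth, closed and semipositive, so wedging $S \geq \delta f^*\omega$ with $f^*\omega_1\w\cdots\w f^*\omega_{p-1}$ and integrating over the $p$-dimensional $\tilde V$ preserves the inequality:
$$
\int_{\tilde V} S \w f^*\omega_1 \w \cdots \w f^*\omega_{p-1} \;\geq\; \delta \int_{\tilde V} f^*\omega \w f^*\omega_1 \w \cdots \w f^*\omega_{p-1}.
$$
Using $\{S\} = f^*\tau$ and the projection formula (as $f$ is birational onto $V$) the left-hand integral equals $\int_V \tau\cdot[\omega_1]\cdots[\omega_{p-1}]$, while the right-hand integral equals $\delta\int_V [\omega]\cdot[\omega_1]\cdots[\omega_{p-1}]$. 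The latter is the intersection of $p$ K\"ahler classes over the $p$-dimensional $V$, hence strictly positive, and the desired conclusion $\int_V \tau\cdot[\omega_1]\cdots[\omega_{p-1}] \geq \delta\int_V [\omega]\cdot[\omega_1]\cdots[\omega_{p-1}] > 0$ follows. If one allows $V$ to be reducible, the same argument applied to each irreducible component not contained in $E_{nK}(\tau)$ and summed gives the result.
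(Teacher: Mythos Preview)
Your argument is correct and takes a genuinely different route from the paper's proof. Both proofs begin identically, invoking \cite[Theorem 3.17(ii)]{Boucksomthesis} to produce a K\"ahler current $T\in\tau$ with analytic singularities and $E_+(T)=E_{nK}(\tau)$. From there the paper stays on $X$: it applies Demailly regularization to $T$ to obtain smooth representatives $\theta_k\in\tau$ with $\theta_k+\lambda_k\omega_1\geq\omega_1$ for continuous $\lambda_k(x)\searrow\nu(T,x)$, integrates these smooth inequalities over the regular part of $V$, and then passes to the limit via bounded convergence, using that $\nu(T,\cdot)$ vanishes almost everywhere on $V$. Your approach instead pulls $T$ back to a resolution $\tilde V\to V$, which is legitimate precisely because $T$ has analytic singularities and $V\not\subseteq E_+(T)$; the inequality $f^*T\geq\delta f^*\omega$ then survives wedging with the smooth semipositive forms $f^*\omega_i$, and the cohomological identification $\int_{\tilde V} f^*T\wedge f^*\omega_1\wedge\cdots = f^*\tau\cdot f^*[\omega_1]\cdots$ together with the projection formula finishes the job. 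Your route is more direct and avoids the approximation and limit argument entirely; the paper's route has the minor advantage of never leaving $X$ or invoking pullbacks of singular currents, at the cost of the regularization machinery. One small point worth making explicit in your write-up is why $\int_{\tilde V} S\wedge\Omega$ depends only on the class $\{S\}=f^*\tau$: this is because $S-f^*\theta=dd^c(f^*\varphi)$ with $f^*\varphi\in L^1_{\mathrm{loc}}(\tilde V)$ and $\Omega$ is $\partial$- and $\bar\partial$-closed, so $\langle dd^c(f^*\varphi),\Omega\rangle=\langle f^*\varphi, dd^c\Omega\rangle=0$.
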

\begin{proof}
    By a result of Boucksom \cite[3.17 (ii)]{Boucksomthesis}, there exists a Kähler current with analytic singularities $T\in\tau$ such that $E_+(T)=E_{nK}(\tau)$. By rescaling $\omega_1$ if necessary, we may assume that $T\geq \omega_1$. By Demailly's regularisation result \cite[Main Theorem 1.1]{Demailly} there exist smooth forms $\theta_{k}$ representing $\tau$ and continuous functions $\lambda_{k}:X\to \mathbb R$ such that $\theta_{k}$ converge weakly to $T$, $\theta_{k}+\lambda_{k}\omega_1 \geq \omega_1$ in the sense of currents and $\lambda_{k}(x)$ decrease to $\nu(T,x)$. Since $X$ is compact, we can find smooth functions $\rho_{k}:X\to \mathbb R$ such that for all $x\in X$ we have $$
    0 \leq \rho_{k}(x)-\lambda_{k}(x) \leq 2^{-k}.
    $$
    Then, we obtain
    $$
    \theta_{k}+\rho_{k}\omega_1 \geq (1+2^{-k})\omega_1.
    $$ From this, it follows that
    \[
    (\theta_k+\rho_k\omega_1)\wedge \omega_1\wedge\ldots\wedge\omega_{p-1}\geq(1+2^{-k})\omega_1^2\wedge\ldots\wedge\omega_{p-1}
    \] as smooth measures on the regular part of $V$. Thus, we have 
    \begin{equation*}
        \int_V \tau \cdot [\omega_1]\cdot\ldots\cdot[\omega_{p-1}] + \int_V \rho_k \omega_1^2\wedge\omega_2\wedge\ldots\wedge\omega_{p-1} \geq (1+2^{-k}) \int_V \omega_1^2\wedge\omega_2\wedge\ldots\wedge\omega_{p-1}.
    \end{equation*}
    Now $\rho_{k}(x)$ also converges to $\nu(T,x)$, and thus the sequence $\rho_k$ of smooth functions converges to zero almost everywhere on the regular part of $V$ with respect to the measure $\omega_1^2\wedge\omega_2\wedge\ldots\wedge\omega_{p-1}$. Thus, taking the limit and applying the bounded convergence theorem, we obtain that 
    $$
    \int_V \tau \cdot [\omega_1]\cdot\ldots\cdot[\omega_{p-1}] \geq \int_V [\omega_1]^2\cdot[\omega_2]\cdot\ldots\cdot[\omega_{p-1}] > 0.
    $$
\end{proof}
Finally, we shall need the following statement, which will be used to establish statements related to wall-chamber decompositions.
\begin{lemma}\label{lem:wallchamberlemma}
   Let $X$ be a compact K\"ahler manifold and $\tau$ a $p+1$-modified K\"ahler class on $X$. Then, there exists an open neighbourhood $U$ of $\tau$ in $H^{1,1}(X,\mathbb R)$ and a finite set $S= \{V_1,\dots,V_r\}$ of irreducible $p$-dimensional subvarieties of $X$ such that for all K\"ahler forms $\omega_1,\dots,\omega_{p-1}$ and all $\tau^\prime\in U$ whenever we have \[
\int_V \tau \cdot [\omega_1] \cdot \dots [\omega_{p-1}] \leq 0
\]
then $V\in S$. 
\end{lemma}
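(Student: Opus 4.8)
The plan is to reduce everything to a single fixed big class whose non-Kähler locus supplies the finite list $S$, absorbing the perturbation $\tau'-\tau$ into a Kähler correction term. (I read the displayed inequality as $\int_V \tau'\cdot[\omega_1]\cdots[\omega_{p-1}]\le 0$, since otherwise the quantifier over $\tau'\in U$ would play no role.)

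First I would fix an auxiliary Kähler form $\omega_0$ and exploit that $\mathcal M_{p+1}\mathcal K$ is open to choose $\delta>0$ with
$$
\sigma := \tau - 2\delta[\omega_0] \in \mathcal M_{p+1}\mathcal K .
$$
In particular $\sigma$ is big, so (by the discussion preceding Lemma \ref{lem:EnK}) the non-Kähler locus $E_{nK}(\sigma)$ is a closed analytic subset and hence has finitely many irreducible components; moreover, by Lemma \ref{lem:EnK}, each of these has dimension at most $p$. I then set $S=\{V_1,\dots,V_r\}$ to be the finite collection of those irreducible components of $E_{nK}(\sigma)$ that have dimension exactly $p$.

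Next I would define the neighbourhood. Since $\tau-\sigma = 2\delta[\omega_0]\in\mathcal K_X$ and the Kähler cone is open, the set $U := \sigma + \mathcal K_X$ is an open neighbourhood of $\tau$ in $H^{1,1}(X,\mathbb R)$. For any $\tau'\in U$ I write $\tau'=\sigma+\kappa'$ with $\kappa':=\tau'-\sigma$ a Kähler class, and for a $p$-dimensional irreducible subvariety $V$ I split
$$
\int_V \tau'\cdot[\omega_1]\cdots[\omega_{p-1}] = \int_V \sigma\cdot[\omega_1]\cdots[\omega_{p-1}] + \int_V \kappa'\cdot[\omega_1]\cdots[\omega_{p-1}].
$$
The second term is $>0$ for every choice of Kähler forms $\omega_1,\dots,\omega_{p-1}$, being the intersection of $p$ Kähler classes against a $p$-dimensional subvariety. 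Now I run the dichotomy. If $V\not\subseteq E_{nK}(\sigma)$, then Lemma \ref{lem:main lemma} applied to the big class $\sigma$ gives $\int_V \sigma\cdot[\omega_1]\cdots[\omega_{p-1}]>0$, so the full integral is strictly positive and $V$ cannot satisfy the displayed inequality. Otherwise $V\subseteq E_{nK}(\sigma)$; being irreducible of dimension $p$ and contained in an analytic set all of whose components have dimension $\le p$, it must coincide with one of those components of dimension $p$, i.e.\ $V\in S$. Contrapositively, $\int_V \tau'\cdot[\omega_1]\cdots[\omega_{p-1}]\le 0$ forces $V\in S$, which is the claim.

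The only genuine obstacle is the uniformity of $S$ and $U$ over \emph{both} the perturbations $\tau'$ and the arbitrary auxiliary forms $\omega_i$. This is exactly what passing to the fixed sub-class $\sigma$ achieves: it pins down a single analytic set $E_{nK}(\sigma)$, independent of $\tau'$ and of the $\omega_i$, whose top-dimensional components are the only possible violators, while the positivity of the leftover Kähler term $\int_V \kappa'\cdot[\omega_1]\cdots[\omega_{p-1}]$ absorbs the perturbation uniformly in the $\omega_i$. One should double-check only the standard facts that $\mathcal M_{p+1}\mathcal K$ and $\mathcal K_X$ are open and that intersections of Kähler classes against subvarieties are strictly positive, both of which are routine.
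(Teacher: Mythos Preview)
Your proof is correct and takes a genuinely different route from the paper's. The paper chooses finitely many $(p+1)$-modified K\"ahler classes $\tau_1,\dots,\tau_\ell$ with $\tau$ in the interior of their convex hull, sets $U=\mathrm{Int}(\mathrm{conv}(\tau_1,\dots,\tau_\ell))$, and takes $S$ to be the union of the $p$-dimensional components of the various $E_{nK}(\tau_i)$; for $\tau'\in U$ one writes $\tau'=\sum a_i\tau_i$ with $a_i>0$, so a non-positive integral against $\tau'$ forces a non-positive integral against some $\tau_i$, whence $V\subseteq E_{nK}(\tau_i)$ by Lemma~\ref{lem:main lemma}. You instead subtract a small K\"ahler multiple to produce a single reference class $\sigma=\tau-2\delta[\omega_0]\in\mathcal M_{p+1}\mathcal K$, take $U=\sigma+\mathcal K_X$, and let $S$ be the $p$-dimensional components of the one set $E_{nK}(\sigma)$; the leftover $\tau'-\sigma$ is K\"ahler and contributes a strictly positive term, so the $\sigma$-term must be non-positive and Lemma~\ref{lem:main lemma} again gives $V\subseteq E_{nK}(\sigma)$. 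Your argument is a bit more economical (one non-K\"ahler locus rather than $\ell$ of them, hence a smaller $S$), while the paper's convex-hull formulation makes the dependence on the ambient $(p+1)$-modified K\"ahler cone more explicit; both rest on the same two lemmas and neither gains anything essential over the other.
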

\begin{proof}
    By the openness of the $(p+1)$-modified K\"ahler cone $\mathcal M_{p+1}\mathcal K$, we can find $\tau_1,\dots,\tau_\ell$ containing $\tau$ in the interior of their convex hull, 
\[
\tau \in U:= \mathrm{Int}(\mathrm{conv}(\tau_1,\dots,\tau_\ell)) \neq \varnothing.
\]
Since $\tau_i$ is $(p+1)$-modified K\"ahler, $E_{nK}(\tau_i)$ contains no $(p+1)$-dimensional irreducible subvarieties of $X$. Let $S$ be the union of all the $p$-dimensional irreducible subvarieties of $X$ contained in $E_{nK}(\tau_i)$ for $i = 1,\dots, \ell$. Now, let $\tau^\prime \in U$ and $\omega_1,\dots, \omega_{p-1}$ be K\"ahler classes on $X$. Suppose that we have 
\[
\int_V \tau^\prime \cdot [\omega_1] \cdot \dots\cdot [\omega_{p-1}] \leq 0
\]
for some $p$-dimensional irreducible subvariety $V$ of $X$. Then, since $\tau^\prime \in U$, we can write 
\[
\tau^\prime = \sum_{i=1}^\ell a_i \tau_i
\]
where $a_i > 0$ and $\sum_i a_i = 1$. From this, it follows that 
\[
\int_V \tau_i \cdot [\omega_1] \cdot \dots\cdot [\omega_{p-1}] \leq 0,
\]
for some $i$, and this in turn implies (by Lemma \ref{lem:main lemma}) that $V$ lies in $E_{nK}(\tau_i)$, and therefore $V\in S$. 
\end{proof}

\bigskip

\section{Destabilising subvarieties for the J-equation} \label{Section main results J}
\noindent In the spirit of slope stability, we will use the following notation: if $X$ is a compact Kähler manifold and $\alpha,\beta$ are Kähler classes on $X$, set 
$$
\mu_{\alpha,\beta}(Z) := (\dim Z) \frac{\alpha^{\dim Z - 1}\cdot \beta \cdot [Z]}{\alpha^{\dim Z}\cdot [Z]}.
$$
We will abbreviate $\mu_{\alpha,\beta}(X) =: \mu_{\alpha,\beta}$. The triple $(X,\alpha,\beta)$ is called $J$-stable (respectively $J$-semistable) if for all proper irreducible analytic subvarieties $Z$ of $X$, we have 
$$
\mu_{\alpha,\beta}(Z) < \mu_{\alpha,\beta} \quad \textrm{ (respectively } \mu_{\alpha,\beta}(Z) \leq \mu_{\alpha,\beta}\textrm{)},
$$
and $J$-unstable if it is not $J$-semistable. We will often drop the prefix and refer to these notions as simply stable or semistable respectively. 

\subsection{For threefolds}

We are now in a position to prove one of the main results of this paper.
\begin{theorem} \label{main thm 3-folds}
    Let $X$ be a compact Kähler 3-fold and let $\alpha, \beta$ be Kähler classes on $X$ such that the triple $(X,\alpha, \beta)$ is J-semistable and the class $\mu_{\alpha,\beta} \alpha - 2 \beta$ is big. Then, there exist at most finitely many irreducible subvarieties $Z\subseteq X$ such that 
    $$
    \mu_{\alpha,\beta}(Z) = 
    \mu_{\alpha, \beta}.
    $$
    \end{theorem}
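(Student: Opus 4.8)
The plan is to reformulate the equality $\mu_{\alpha,\beta}(Z)=\mu_{\alpha,\beta}$ in terms of the factor classes $\tau_1 := \mu_{\alpha,\beta}\alpha - \beta$ and $\tau_2 := \mu_{\alpha,\beta}\alpha - 2\beta$, and then to confine all destabilising subvarieties to the non-Kähler locus $E_{nK}(\tau_2)$. Writing $\mu:=\mu_{\alpha,\beta}$, a direct computation shows that for an irreducible surface $S$ one has $\mu_{\alpha,\beta}(S)=\mu$ if and only if $\int_S \tau_2\cdot\alpha = 0$, while for an irreducible curve $C$ one has $\mu_{\alpha,\beta}(C)=\mu$ if and only if $\tau_1\cdot C = 0$. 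In both cases J-semistability gives the corresponding non-strict inequality ($\int_S\tau_2\cdot\alpha\geq 0$, resp. $\tau_1\cdot C\geq 0$), so destabilisers at semistability are exactly the subvarieties achieving equality.

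First I would dispatch the surfaces. Since $\tau_2$ is big and $\int_S \tau_2\cdot\alpha = 0$, Lemma \ref{lem:main lemma} (applied with $p=2$ and $\omega_1$ a Kähler form in $\alpha$) forces $S\subseteq E_{nK}(\tau_2)$, as otherwise the integral would be strictly positive. Because $\tau_2$ is big, $E_{nK}(\tau_2)$ is a proper analytic subset of $X$ with finitely many irreducible components; a destabilising surface, being of codimension one, must coincide with one of its divisorial components. Hence there are only finitely many destabilising surfaces.

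Next, for a destabilising curve $C$ we have $\tau_2\cdot C = \tau_1\cdot C - \beta\cdot C = -\,\beta\cdot C < 0$ since $\beta$ is Kähler, so by the contrapositive of Lemma \ref{lem:main lemma} with $p=1$ we again obtain $C\subseteq E_{nK}(\tau_2)$. Thus every destabilising curve is either one of the finitely many at-most-one-dimensional components of $E_{nK}(\tau_2)$, or lies in one of its finitely many prime divisorial components $S$. It then remains to bound, for each such $S$, the destabilising curves it contains. Passing to a resolution $\pi:\tilde S\to S$, the curves $C\not\subseteq \mathrm{Sing}(S)$ have strict transforms $\tilde C$ with $L\cdot\tilde C = 0$, where $L:=\pi^*(\tau_1|_S)$, while the remaining curves lie in the finitely many one-dimensional components of $\mathrm{Sing}(S)$. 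Crucially, J-semistability yields $\tau_1\cdot C'\geq 0$ for every curve $C'$, so $L$ is nef on $\tilde S$. If $L^2>0$, the Hodge index theorem makes $L^{\perp}$ negative definite; distinct irreducible curves meet non-negatively yet lie in $L^{\perp}$ with negative self-intersection, hence are linearly independent, so there are at most $\dim L^{\perp}$ of them, giving the desired finiteness.

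The hard part is precisely the positivity $L^2 = \int_S\tau_1^2 > 0$. As $L$ is nef one has $L^2\geq 0$ automatically, so the only danger is the degenerate case $\int_S\tau_1^2 = 0$, in which $\tau_1|_S$ would be a nef class of numerical dimension one and $S$ could carry an infinite family of mutually homologous fibre curves, all orthogonal to $\tau_1$ and hence all destabilising. This is exactly the configuration that bigness of $\tau_2$ must exclude, and where the hypothesis enters essentially (consistent with the sharpness discussion following Main Theorem \ref{main thm 1 intro}, where dropping bigness of $\mu_{\alpha,\beta}\alpha-2\beta$ permits infinitely many destabilisers). I would rule it out using the divisorial Zariski decomposition $\tau_2 = P + N$ of Boucksom \cite{Boucksomthesis}: the divisor $S$ appears in the negative part $N$ with coefficient $\nu(\tau_2,S)>0$, which constrains both $\tau_2|_S$ and the normal class $[S]|_S$, and a fibration as above would force $\tau_2^{3}\leq 0$, contradicting $\tau_2^{3}>0$. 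Proving this last implication cleanly — that bigness of $\tau_2$ upgrades the nef class $\tau_1|_S$ to a \emph{big} one on every divisorial component $S$ of $E_{nK}(\tau_2)$ — is the technical heart of the argument, and the same Hodge-index analysis then yields the negative-self-intersection rigidity recorded in Main Theorem \ref{main thm 1 intro}(2).
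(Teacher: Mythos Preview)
Your overall architecture matches the paper's: confine destabilising surfaces to $E_{nK}(\tau_2)$ via Lemma~\ref{lem:main lemma}, confine destabilising curves to a proper analytic set (you use $E_{nK}(\tau_2)$, the paper uses $E_{nK}(\tau_1)$, but either works), and then argue finiteness of destabilising curves inside each surface component $S$ by showing $\tau_1|_S$ is big and invoking the surface case. The nefness of $L=\pi^*(\tau_1|_S)$ from semistability and the Hodge-index finish once $L^2>0$ are both fine.

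The genuine gap is precisely where you flag it: establishing $L^2>0$. Your proposed route through the divisorial Zariski decomposition $\tau_2=P+N$ and the claim that ``a fibration as above would force $\tau_2^3\leq 0$'' is not worked out and, as stated, is not convincing. The fibration structure lives on the single surface $S$, whereas $\tau_2^3>0$ is a volume statement on all of $X$; there is no evident mechanism by which $(\tau_1|_S)^2=0$ on one divisorial component of $E_{nK}(\tau_2)$ propagates to the global inequality $\tau_2^3\leq 0$. Nothing prevents $\tau_2$ from being big while one component of its non-K\"ahler locus carries a pencil of $\tau_1$-trivial curves, absent a further argument you have not supplied.

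The paper's resolution of this step is much more direct and does not pass through $\tau_2$ or any Zariski decomposition. The trick is to replace the global slope $\mu=\mu_{\alpha,\beta}$ by the slope of the surface itself: by the very definition $\mu_{\alpha,\beta}(S)=2\,(\alpha\cdot\beta\cdot[S])/(\alpha^2\cdot[S])$ one obtains the identities
\[
(\mu_{\alpha,\beta}(S)\alpha-\beta)^2\cdot[S]=\beta^2\cdot[S]>0,\qquad (\mu_{\alpha,\beta}(S)\alpha-\beta)\cdot\alpha\cdot[S]=\alpha\cdot\beta\cdot[S]>0,
\]
valid for \emph{every} irreducible surface $S$, so $(\mu_{\alpha,\beta}(S)\alpha-\beta)|_S$ is big on (any resolution of) $S$ by Lamari's criterion. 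Semistability gives $\mu_{\alpha,\beta}(S)\leq\mu$, whence
\[
\tau_1|_S=(\mu_{\alpha,\beta}(S)\alpha-\beta)|_S+(\mu-\mu_{\alpha,\beta}(S))\,\alpha|_S
\]
is big plus nef, hence big, and in particular $L^2>0$. This elementary computation is the missing ingredient in your proposal; with it in hand, your Hodge-index finish goes through and recovers exactly the negative-self-intersection rigidity you anticipated.
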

    \begin{proof}
       Observe that $(X,\alpha,\beta)$ is semistable but not stable precisely if there exists an irreducible proper subvariety $Z$ of $X$ satisfies $\mu_{\alpha,\beta}(Z) = \mu_{\alpha,\beta}$, i.e. if
        $$
        (\mu_{\alpha,\beta} \alpha - (\dim Z) \beta)\cdot \alpha^{\dim Z - 1} \cdot [Z] = 0.
        $$ Because $ \mu_{\alpha,\beta}\alpha - 2\beta$ (and hence also $\mu_{\alpha,\beta}\alpha - \beta$) is big, by the lemma above, such a $Z$ necessarily satisfies $Z \subseteq E_{nK}(\mu_{\alpha,\beta}\alpha - (\dim Z) \beta)$. 
        Thus, if an irreducible surface $S$ satisfies $\mu_{\alpha,\beta}(S) = \mu_{\alpha,\beta}$, then $S \subseteq E_{nK}(\mu_{\alpha,\beta}\alpha - 2 \beta)$ implies necessarily that $S$ must be one of the finitely many irreducible surface components of $E_{nK}(\mu_{\alpha,\beta}\alpha - 2\beta)$. 

        Similarly, if $C$ is an irreducible curve satisfying $\mu_{\alpha,\beta}(C) = \mu_{\alpha,\beta}$, then $C \subseteq E_{nK}(\mu_{\alpha,\beta} \alpha - \beta)$. If $C$ is not one of the finitely many irreducible components of this analytic set, then $C \subseteq S$ for some irreducible surface in $E_{nK}(\mu_{\alpha,\beta} \alpha - \beta)$. Suppose first that $S$ is a smooth surface. Then, we have
        $$
         (\mu_{\alpha,\beta}(S)\alpha -\beta)^2\cdot [S] = \beta^2 \cdot [S] > 0, 
         $$
         and
         $$(\mu_{\alpha,\beta}(S)\alpha-\beta)\cdot\alpha \cdot [S]= \alpha\cdot\beta\cdot[S] > 0.
        $$
        From this, it follows that the class $(\mu_{\alpha,\beta}(S)\alpha-\beta)|_S$ is big on $S$ and therefore by the \cite[Proposition 3.1]{SohaibDyrefelt} there exist only finitely many curves $C$ on $S$ such that 
         $
        (\mu_{\alpha,\beta}(S)\alpha - \beta) \cdot [C] \leq 0.
        $
        Moreover, by semistability of $(X,\alpha,\beta)$ we have $\mu_{\alpha,\beta}(S) \leq \mu_{\alpha,\beta}$, so 
        $
        (\mu_{\alpha,\beta}\alpha - \beta)\cdot [C] \leq 0
        $
        for only finitely many curves $C \subset S$, as desired. 
        
        In case $S$ is singular, let $f: \tilde X \to X$ be any birational morphism such that the proper transform $\tilde S$ of $S$ in $\tilde X$ is smooth and an isomorphism away from the singular set of $S$. (Such a morphism always exists by Hironaka's theorem on resolution of singularities.) Then, $f_* [\tilde S] = [S]$ in homology and by the projection formula, we have 
        $$
        \mu_{f^*\alpha,f^*\beta}(\tilde S) = \frac{2 f^*(\alpha \cdot \beta) \cdot [\tilde S]}{f^* (\alpha^2) \cdot [\tilde S]} = \frac{2 \alpha \cdot \beta \cdot [S]}{\alpha^2\cdot [S]} = \mu_{\alpha,\beta}(S).
        $$
        Similarly, $\mu_{f^*\alpha,f^*\beta}=\mu_{f^*\alpha,f^*\beta}(\tilde X) = \mu_{\alpha,\beta}$. From this, we obtain (after another application of the projection formula) that  
        $$
         (\mu_{f^*\alpha,f^*\beta}(\tilde S) f^*\alpha - f^*\beta)^2 \cdot [\tilde S] = f^*(\mu_{\alpha,\beta}(S)\alpha -\beta)^2\cdot [\tilde S] = f^*\beta^2 \cdot [\tilde S] = \beta^2 \cdot [S]> 0.
        $$
        Moreover, since $\alpha$ is a Kähler class, $f^*\alpha$ can be represented by a semipositive form on $\tilde X$ (and hence also on $\tilde S$) and we have 
        $$
        (\mu_{f^*\alpha,f^*\beta}(\tilde S) f^*\alpha - f^*\beta)\cdot f^* \alpha \cdot [\tilde S] = f^*(\alpha \cdot \beta) \cdot [\tilde S] = \alpha \cdot \beta \cdot [S] > 0.
        $$
        This implies, by Lamari's criterion (see \cite[Theorem 5.5]{Lamari}) that the class $f^*(\mu_{\alpha,\beta}(S) \alpha - \beta)$ is a big class on $\tilde S$. 
        As before, it follows from semistability of $(X,\alpha,\beta)$ that
        $$
        (\mu_{\alpha,\beta}f^*\alpha - f^*\beta) - (\mu_{\alpha,\beta}(\tilde S)f^*\alpha - f^*\beta) = (\mu_{\alpha,\beta} - \mu_{\alpha,\beta}(S)) f^* \alpha
        $$
        is nef on $\tilde X$, therefore also on $\tilde S$, and in particular, the class $f^*(\mu_{\alpha,\beta}\alpha - \beta)\vert_{\tilde S}$ is a big class on $\tilde S$.
        Hence one concludes from a standard result (see, for example, \cite[Proposition 3.1]{SohaibDyrefelt}) that there are finitely many curves $\tilde C$ in $\tilde S$ (each of which is an irreducible component of the negative part of the Zariski decomposition of $f^*(\mu_{\alpha,\beta}\alpha - \beta)\vert_{\tilde S}$) such that 
        \[
        \int_{\tilde C} f^*(\mu_{\alpha,\beta}\alpha - \beta)\vert_{\tilde S} \leq 0.
        \]
         This last inequality is equivalent to 
        \[
        \mu_{\alpha,\beta} (f^*\alpha)\cdot[\tilde C] \leq (f^*\beta)\cdot [\tilde C].
        \]
        Once again, by the projection formula, this is equivalent to the statement that there are only finitely many curves $\tilde C$ in $\tilde S$ such that
        \[
        \mu_{\alpha,\beta} (\alpha\cdot f_*[\tilde C]) = \beta\cdot f_*[\tilde C].
        \]
        Now let $C \subseteq S$ be a curve such that $\mu_{\alpha,\beta}=\mu_{\alpha,\beta}(C)$ and suppose $C$ does not lie entirely in the singular locus of $S$. Then its proper transform $\tilde C$ under $f$ satisfies $f_*[\tilde C] = [C]$. Now observe that 
        \[
        \mu_{\alpha,\beta}=\mu_{\alpha,\beta}(C) \iff \mu_{\alpha,\beta} (\alpha\cdot[C])=\beta\cdot [C] \iff \mu_{\alpha,\beta} (\alpha\cdot f_*[\tilde C]) = \beta\cdot f_*[\tilde C]
        \]
        and therefore $\tilde C$ can only be one of the finitely many $\tilde C$ in $\tilde S$ with this property, and this implies that there are at most finitely many destabilising curves $C \subseteq S$. 

       We have therefore proven that the destabilising subvarieties are among the finite list of
        \begin{enumerate}
            \item irreducible curve components $C$ of $E_{nK}(\mu_{\alpha,\beta} \alpha - \beta)$
            \item irreducible surface components $S$ of $E_{nK}(\mu_{\alpha,\beta}\alpha - 2\beta)$
            \item the irreducible curve components of the singular locus of $S$ as $S$ ranges over the surface components of $E_{nK}(\mu_{\alpha,\beta}\alpha-\beta)$
            \item for every irreducible surface component $S$ of $E_{nK}(\mu_{\alpha,\beta}\alpha - \beta)$ the curves $C\subseteq S$ not lying entirely in the singular locus of $S$ whose strict transform, under any resolution of singularities $f: \tilde X \to X$ of $S$ as above, occurs as an irreducible component of the negative part of the Zariski decomposition of the (big) class $f^*(\mu_{\alpha,\beta}\alpha-\beta)|_{\tilde S}$ on $\tilde S$. 
        \end{enumerate}
        This concludes the proof.
    \end{proof}

\begin{remark}
    We note that the choice of resolution $f:\tilde X\to X$ of the irreducible surface $S \subseteq E_{nK}(\mu_{\alpha,\beta}\alpha-\beta)$ does not affect the set of potential destabilisers described in the proof. More precisely, let $f_i:\tilde X_i \to X$ be two resolutions of $S$ coming from Hironaka's proof, i.e. the strict transform $\tilde S_i$ of $S_i$ in $\tilde X_i$ is smooth and $f_i$ is a composition of blowing up repeatedly in points or smooth curves contained in the singular locus of (the strict transforms of) $S$.  Then, by resolving the indeterminacy of the rational map $f_2^{-1} \circ f_1$ we get a common resolution $g :\tilde X \to X$ with morphisms $g_i:\tilde X \to \tilde X_i$ with $f_i \circ g_i = g$ for $i=1,2$. If we denote by $\tilde S$ the strict transform of $S$ in $\tilde X$, then by what we said above about Hironaka's proof, $g_i:\tilde S \to S_i$ is a composition of blowups in points of the smooth surface $S_i$.
    Now the formulas (see \cite[Lemma 5.8]{Boucksomthesis})
    $$
    (g_i)_* N(g_i^*f_i^*(\mu_{\alpha,\beta}\alpha-\beta)) = N(f_i^*(\mu_{\alpha,\beta}\alpha-\beta))
    $$ imply that the support of the  negative part $N(g^*(\mu_{\alpha,\beta}\alpha-\beta))$ comprises the strict transforms of the irreducible components of $N(f_i^*(\mu_{\alpha,\beta}\alpha-\beta))$ plus a possible union of a divisor supported on the exceptional locus of $g_i$. Since this is true for $g_1$ and $g_2$, it follows that for each curve $C \subseteq S$, if the strict transform $\tilde C_1 \subseteq \tilde S_1$ under $f_1$ occurs as a component of $N(f_1^*(\mu_{\alpha,\beta}\alpha-\beta))$, then its strict transform $\tilde C_2 \subseteq \tilde S_2$ under $f_2$ also occurs as a component of $N(f_2^*(\mu_{\alpha,\beta}\alpha-\beta))$, and vice versa.
\end{remark}

\noindent Away from semistability, it may happen that there are infinitely many destabilising subvarieties, even under the assumption that the class $\mu_{\alpha,\beta}\alpha-2\beta$ be big. However, in this case we still have adequate control over the destabilising subvarieties in the following precise sense: 
\begin{theorem}\label{thm analyticity}
    Suppose $X$ is a compact Kähler 3-fold and $\alpha,\beta$ are Kähler classes on X such that $\mu_{\alpha,\beta}\alpha-2\beta$ is a big class. Let $V_{\alpha,\beta}$ be the union of all irreducible subvarieties $Z$ of $X$ with $\mu_{\alpha,\beta}(Z) \geq \mu_{\alpha,\beta}$. Then, $V_{\alpha,\beta}$ is a proper analytic subset of $X$.
\end{theorem}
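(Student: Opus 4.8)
The plan is to realise $V_{\alpha,\beta}$ as a finite union of irreducible analytic sets. Write $\mu := \mu_{\alpha,\beta}$. Since a point $Z$ has $\mu_{\alpha,\beta}(Z) = 0 < \mu$, only curves and surfaces can be destabilising, and the condition $\mu_{\alpha,\beta}(Z)\geq\mu$ is equivalent (using $\alpha^{\dim Z}\cdot[Z]>0$) to $(\mu\alpha - (\dim Z)\beta)\cdot\alpha^{\dim Z-1}\cdot[Z]\leq 0$. First I would dispose of the destabilising surfaces: for such an $S$ the condition reads $(\mu\alpha - 2\beta)\cdot\alpha\cdot[S]\leq 0$, so since $\mu\alpha - 2\beta$ is big, Lemma \ref{lem:main lemma} forces $S\subseteq E_{nK}(\mu\alpha - 2\beta)$; being an irreducible surface, $S$ must be one of the finitely many two-dimensional components of this non-Kähler locus. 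Hence the destabilising surfaces are finite in number and their union $\Sigma$ is a proper analytic subset of $X$.

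Next I would treat destabilising curves $C$, for which the condition is $(\mu\alpha - \beta)\cdot[C]\leq 0$. As $\mu\alpha - \beta = (\mu\alpha - 2\beta)+\beta$ is big, Lemma \ref{lem:main lemma} gives $C\subseteq E_{nK}(\mu\alpha - \beta)$, and I would split according to how $C$ sits in this locus. If $C$ is itself an irreducible curve component of $E_{nK}(\mu\alpha - \beta)$, there are only finitely many such. Otherwise $C$ is contained in a surface component $S$ of $E_{nK}(\mu\alpha - \beta)$; if this $S$ is destabilising then (as above) $S\subseteq\Sigma$, so $C$ is harmlessly absorbed into $\Sigma$, and the crucial remaining case is a destabilising curve $C\subseteq S$ with $\mu_{\alpha,\beta}(S)<\mu$.

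For such a surface I would reuse the resolution argument from the proof of Theorem \ref{main thm 3-folds}: on a resolution $f:\tilde X\to X$ with smooth strict transform $\tilde S$, Lamari's criterion shows $f^*(\mu_{\alpha,\beta}(S)\alpha-\beta)\vert_{\tilde S}$ is big, and adding the nef class $(\mu-\mu_{\alpha,\beta}(S))f^*\alpha\vert_{\tilde S}$ (nonnegative since $\mu_{\alpha,\beta}(S)<\mu$) keeps $f^*(\mu\alpha-\beta)\vert_{\tilde S}$ big. By \cite[Proposition 3.1]{SohaibDyrefelt} a big class on a smooth surface has non-positive intersection with only finitely many curves, so at most finitely many curves $C\subseteq S$ outside $\mathrm{Sing}(S)$ are destabilising; together with the finitely many curve components of $\mathrm{Sing}(S)$, each of the finitely many such $S$ contributes only finitely many destabilising curves. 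Collecting the three cases, every destabilising curve either lies in $\Sigma$ or belongs to a finite list $C_1,\dots,C_m$, whence $V_{\alpha,\beta}=\Sigma\cup C_1\cup\dots\cup C_m$ is a finite union of irreducible analytic sets; it is proper because it is contained in the proper analytic subsets $E_{nK}(\mu\alpha-2\beta)$ and $E_{nK}(\mu\alpha-\beta)$.

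I expect the main obstacle to be the bookkeeping confirming that the genuinely infinite families of destabilising curves (which can occur away from semistability) are entirely confined to the finitely many destabilising surfaces, so that only finitely many destabilising curves escape into the non-destabilising surface components of $E_{nK}(\mu\alpha-\beta)$; a secondary point, already addressed in the Remark following Theorem \ref{main thm 3-folds}, is checking that the negative-part/resolution analysis on $\tilde S$ is independent of the chosen resolution.
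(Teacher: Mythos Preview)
Your proposal is correct and follows essentially the same approach as the paper's own proof: finitely many destabilising surfaces via $E_{nK}(\mu\alpha-2\beta)$, destabilising curves forced into $E_{nK}(\mu\alpha-\beta)$, and on each non-destabilising surface component $S$ the bigness of $(\mu\alpha-\beta)\vert_{\tilde S}$ (obtained from Lamari plus the nef perturbation by $(\mu-\mu_{\alpha,\beta}(S))\alpha$) bounds the destabilising curves there by the surface result. The paper phrases the last step as ``finitely many curves $C\subseteq S$ with $\mu_{\alpha,\beta}(C)\geq\mu_{\alpha,\beta}(S)$, hence a fortiori finitely many with $\mu_{\alpha,\beta}(C)\geq\mu_{\alpha,\beta}$'', which is the same argument in slightly different packaging.
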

\begin{remark}
    In particular, $V_{\alpha,\beta}$ has finitely many irreducible components. 
\end{remark}
\begin{proof}
    If $S$ is any destabilising irreducible surface in $X$, then $S \subseteq E_{nK}(\mu_{\alpha,\beta}\alpha - 2 \beta)$. So, there are always at most finitely many destabilising surfaces. Let $V_2$ be the union of all of these. 
    
    We must prove that all but finitely many destabilising curves $C$ are contained in $V_2$. But a destabilising curve $C$ likewise satisfies $C \subseteq E_{nK}(\mu_{\alpha,\beta}\alpha-\beta)$. There are only finitely many irreducible components of $E_{nK}(\mu_{\alpha,\beta}\alpha - \beta)$ of dimension one. Assume therefore that $C$ is not an irreducible component of $E_{nK}(\mu_{\alpha,\beta}\alpha - \beta)$. Then $C$ lies instead in a surface $S \subseteq E_{nK}(\mu_{\alpha,\beta}\alpha - \beta)$. If $S \subseteq V_2$, there is nothing to check. So, suppose $S$ is not contained in $V_2$, i.e. $\mu_{\alpha,\beta}(S) < \mu_{\alpha,\beta}$. Then, by the same argument as in the proof above, there exist at most finitely many curves $C \subseteq S$ with $\mu_{\alpha,\beta}(C) \geq \mu_{\alpha,\beta}(S)$ and therefore also only finitely many curves $C$ with $\mu_{\alpha,\beta}(C) \geq \mu_{\alpha,\beta} > \mu_{\alpha,\beta}(S)$. This proves the claim.
\end{proof}

\noindent The proof of the theorem in particular yields the following useful corollary: 

\begin{corollary}\label{cor:finiteatsemistability}
    Let $(X,\alpha,\beta)$ be as in the Theorem. Then, for each surface $S\subseteq V_{\alpha,\beta}$ satisfying $\mu_{\alpha,\beta}(S)\leq \mu_{\alpha,\beta}$ there are only finitely many curves $C \subseteq S$ such that $\mu_{\alpha,\beta}(C)\geq \mu_{\alpha,\beta}.$
\end{corollary}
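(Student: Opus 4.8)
The plan is to reduce the statement to the intersection-theoretic finiteness already exploited in the proof of Theorem~\ref{main thm 3-folds}, the new input being that the hypothesis $\mu_{\alpha,\beta}(S)\leq\mu_{\alpha,\beta}$ together with the nefness of $\alpha$ lets one dominate the destabilising condition on curves by the milder condition phrased with $\mu_{\alpha,\beta}(S)$ in place of $\mu_{\alpha,\beta}$. First I would translate the hypothesis on curves into intersection numbers: for an irreducible curve $C\subseteq S$, since $\dim C = 1$ we have $\mu_{\alpha,\beta}(C) = \frac{\beta\cdot[C]}{\alpha\cdot[C]}$, so (as $\alpha\cdot[C]>0$) the condition $\mu_{\alpha,\beta}(C)\geq\mu_{\alpha,\beta}$ is equivalent to $(\mu_{\alpha,\beta}\alpha-\beta)\cdot[C]\leq 0$, and likewise $\mu_{\alpha,\beta}(C)\geq\mu_{\alpha,\beta}(S)$ is equivalent to $(\mu_{\alpha,\beta}(S)\alpha-\beta)\cdot[C]\leq 0$.

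Next I would establish that $(\mu_{\alpha,\beta}(S)\alpha-\beta)$ restricts to a big class on $S$, exactly as in the proof of Theorem~\ref{main thm 3-folds}. In the smooth case, substituting $\mu_{\alpha,\beta}(S)=2\,\frac{\alpha\cdot\beta\cdot[S]}{\alpha^2\cdot[S]}$ gives $(\mu_{\alpha,\beta}(S)\alpha-\beta)^2\cdot[S]=\beta^2\cdot[S]>0$ and $(\mu_{\alpha,\beta}(S)\alpha-\beta)\cdot\alpha\cdot[S]=\alpha\cdot\beta\cdot[S]>0$, whence Lamari's criterion (see \cite[Theorem 5.5]{Lamari}) yields bigness; in the singular case one passes to a resolution $f:\tilde X\to X$ with $\tilde S$ smooth and runs the same computation upstairs through the projection formula, as in the singular case of Theorem~\ref{main thm 3-folds}. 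With bigness in hand, \cite[Proposition 3.1]{SohaibDyrefelt} supplies a \emph{finite} set of curves $C\subseteq S$ (respectively on $\tilde S$) for which $(\mu_{\alpha,\beta}(S)\alpha-\beta)\cdot[C]\leq 0$.

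Finally I would compare the two conditions. Writing $\mu_{\alpha,\beta}\alpha-\beta=(\mu_{\alpha,\beta}(S)\alpha-\beta)+(\mu_{\alpha,\beta}-\mu_{\alpha,\beta}(S))\alpha$ and using $\mu_{\alpha,\beta}-\mu_{\alpha,\beta}(S)\geq 0$ together with the nefness of $\alpha$ (so that $(\mu_{\alpha,\beta}-\mu_{\alpha,\beta}(S))\alpha\cdot[C]\geq 0$ for every curve $C$), any curve with $(\mu_{\alpha,\beta}\alpha-\beta)\cdot[C]\leq 0$ necessarily also satisfies $(\mu_{\alpha,\beta}(S)\alpha-\beta)\cdot[C]\leq 0$, hence lies in the finite set just produced. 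Equivalently, the destabilising curves form a subset of $\{C\subseteq S:\mu_{\alpha,\beta}(C)\geq\mu_{\alpha,\beta}(S)\}$, which is finite; this gives the claim, and in the boundary case $\mu_{\alpha,\beta}(S)=\mu_{\alpha,\beta}$ the two conditions simply coincide.

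I expect the only genuine bookkeeping obstacle to be the singular case: one must verify that the finitely many negative curves found on the resolution $\tilde S$ descend to finitely many curves on $S$. This requires separating the curves contained in the singular locus of $S$ (finitely many, since $\mathrm{Sing}(S)$ has finitely many irreducible components) from those whose strict transform is a genuine negative curve on $\tilde S$. This is precisely the dichotomy recorded in the rigidity statement of Main Theorem~\ref{main thm 1 intro}, and the argument of Theorem~\ref{main thm 3-folds} applies here verbatim.
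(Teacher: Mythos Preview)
Your proposal is correct and follows essentially the same route as the paper's own argument: the paper's one-line proof simply observes that the hypothesis $\mu_{\alpha,\beta}(S)\leq\mu_{\alpha,\beta}$ is exactly what is needed to make $\mu_{\alpha,\beta}\alpha-\beta$ big on $S$ (equivalently, to run the comparison step you spell out), after which the argument from the proof of Theorem~\ref{main thm 3-folds} applies verbatim. Your version is more explicit---you do the comparison at the level of individual curves rather than at the level of classes (big $+$ nef $=$ big)---but the two are logically interchangeable and your handling of the singular case via resolution is exactly what the paper does.
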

\begin{proof}
    Observe that $\mu_{\alpha,\beta}(S) \leq \mu_{\alpha,\beta}$ is all one needs to conclude that the class $\mu_{\alpha,\beta}\alpha-\beta$ is big on $S$.
\end{proof}

\begin{lemma}
    Let $\alpha, \beta_i, i = 1, 2$ be Kähler classes on a compact Kähler 3-fold $X$ such that $\mu_{\alpha,\beta_i}\alpha - 2\beta_i$ is a big class for $i=1,2$. Then for any $t\in [0,1]$ we have 
    $$
    V_{\alpha,\beta_1} \cap V_{\alpha,\beta_2} \subseteq V_{\alpha,t\beta_1+(1-t)\beta_2} \subseteq V_{\alpha,\beta_1}\cup V_{\alpha,\beta_2}.
    $$
\end{lemma}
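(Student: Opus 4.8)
The plan is to reduce everything to the observation that, for a fixed Kähler class $\alpha$ and a fixed irreducible subvariety $Z$ of dimension $d$, the slope $\mu_{\alpha,\beta}(Z)=d\,\frac{\alpha^{d-1}\cdot\beta\cdot[Z]}{\alpha^{d}\cdot[Z]}$ is a \emph{linear} function of $\beta$ (the numerator is linear in $\beta$, the denominator a fixed positive number), and likewise $\mu_{\alpha,\beta}=\mu_{\alpha,\beta}(X)$ is linear in $\beta$. Writing $\beta_t:=t\beta_1+(1-t)\beta_2$ and
\[
\phi_Z(t):=\mu_{\alpha,\beta_t}(Z)-\mu_{\alpha,\beta_t},
\]
the map $t\mapsto\phi_Z(t)$ is therefore affine, so $\phi_Z(t)=t\,\phi_Z(1)+(1-t)\,\phi_Z(0)$. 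By definition $Z$ destabilises $\beta_t$ precisely when $\phi_Z(t)\ge0$, and $V_{\alpha,\beta_t}=\bigcup_{\phi_Z(t)\ge0}Z$.

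For the second inclusion $V_{\alpha,\beta_t}\subseteq V_{\alpha,\beta_1}\cup V_{\alpha,\beta_2}$ this already suffices. If $Z$ is irreducible with $\phi_Z(t)\ge0$ for some $t\in[0,1]$, then since an affine function on $[0,1]$ attains its maximum at an endpoint we have $\max(\phi_Z(0),\phi_Z(1))\ge\phi_Z(t)\ge0$; hence $Z$ destabilises $\beta_1$ or $\beta_2$, so $Z\subseteq V_{\alpha,\beta_1}\cup V_{\alpha,\beta_2}$. Taking the union over all such $Z$ gives the claim.

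The first inclusion $V_{\alpha,\beta_1}\cap V_{\alpha,\beta_2}\subseteq V_{\alpha,\beta_t}$ is the substantial one. First I would invoke Theorem \ref{thm analyticity} (valid since each $\mu_{\alpha,\beta_i}\alpha-2\beta_i$ is big) to write $V_{\alpha,\beta_1}\cap V_{\alpha,\beta_2}$ as a \emph{finite} union of irreducible components. A two–dimensional component is a surface lying in both $V_{\alpha,\beta_1}$ and $V_{\alpha,\beta_2}$, i.e. a common destabilising surface with $\phi_S(0)\ge0$ and $\phi_S(1)\ge0$; affineness then forces $\phi_S(t)\ge0$ for all $t$, so $S\subseteq V_{\alpha,\beta_t}$. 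The genuinely delicate case is a one–dimensional component $C$: such a curve need \emph{not} destabilise either $\beta_1$ or $\beta_2$, since it may enter the intersection only through destabilising surfaces $A_1\supseteq C$ (for $\beta_1$) and $A_2\supseteq C$ (for $\beta_2$). To control it I would record the convexity of non-Kähler loci: using Boucksom's Kähler currents \cite[Theorem 3.17]{Boucksomthesis} realising $E_+=E_{nK}$ together with additivity of Lelong numbers, one obtains $E_{nK}(\mu_{\alpha,\beta_t}\alpha-2\beta_t)\subseteq E_{nK}(\mu_{\alpha,\beta_1}\alpha-2\beta_1)\cup E_{nK}(\mu_{\alpha,\beta_2}\alpha-2\beta_2)$ for all $t$ (each $\mu_{\alpha,\beta_t}\alpha-2\beta_t$ being big as a convex combination of big classes). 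Since any destabilising surface for any $\beta_t$ lies in $E_{nK}(\mu_{\alpha,\beta_t}\alpha-2\beta_t)$ by Lemma \ref{lem:main lemma}, only the finitely many surface components $S_1,\dots,S_k$ of the right-hand side can ever destabilise along the segment; among these let $S_{i_1},\dots,S_{i_m}$ be those containing $C$. Thus $C\subseteq V_{\alpha,\beta_t}$ if and only if $\phi_C(t)\ge0$ or $\phi_{S_{i_j}}(t)\ge0$ for some $j$, and the problem reduces to showing that the finitely many closed parameter-intervals $\{\phi_C\ge0\}$ and $\{\phi_{S_{i_j}}\ge0\}$ together cover all of $[0,1]$.

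The main obstacle is precisely this covering statement: a priori the destabilising intervals of $C$ and of the surfaces through it could leave a \emph{gap} of intermediate parameters at which neither $C$ nor any relevant surface destabilises (soft convexity is useless here, since $t\mapsto\max_j\phi_{S_{i_j}}(t)$ is only convex, hence may dip below zero between endpoints where it is nonnegative). To rule this out I would pass to a resolution $f\colon\tilde S\to S$ of a destabilising surface $S\supseteq C$ as in the proof of Theorem \ref{main thm 3-folds}, and exploit the decomposition $\phi_C(t)=\phi_S(t)+\psi(t)$, where $\psi(t):=\mu_{\alpha,\beta_t}(C)-\mu_{\alpha,\beta_t}(S)$ is the \emph{within–surface} slope discrepancy of the strict transform of $C$ in $\tilde S$ (here the restricted and ambient slopes agree by the projection formula). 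One then analyses the critical parameters at which $\phi_S$ or $\phi_C$ vanishes, using the surface-level bigness and finiteness results of \cite[Proposition 3.1, Theorem 5.3]{SohaibDyrefelt} applied on $\tilde S$, to show that whenever $S$ ceases to destabilise, either $C$ itself has already begun to destabilise or another surface from the finite list takes over. This interleaving of destabilising intervals is the crux of the argument; once it is established the finite union of intervals exhausts $[0,1]$ and $V_{\alpha,\beta_1}\cap V_{\alpha,\beta_2}\subseteq V_{\alpha,\beta_t}$ follows.
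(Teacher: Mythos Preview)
Your argument for the second inclusion $V_{\alpha,\beta_t}\subseteq V_{\alpha,\beta_1}\cup V_{\alpha,\beta_2}$ is exactly the paper's: both follow in one line from the identity $\mu_{\alpha,t\beta_1+(1-t)\beta_2}(Z)=t\,\mu_{\alpha,\beta_1}(Z)+(1-t)\,\mu_{\alpha,\beta_2}(Z)$. For the first inclusion the paper records nothing beyond this same linearity; the intended argument is simply that any irreducible $Z$ destabilising both $\beta_1$ and $\beta_2$ has $\phi_Z(0),\phi_Z(1)\ge0$, hence $\phi_Z(t)\ge0$ by affineness, so $Z\subseteq V_{\alpha,\beta_t}$. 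You have read the inclusion more stringently, as a pointwise statement about subsets of $X$, and observed that a point might lie in $V_{\alpha,\beta_1}\cap V_{\alpha,\beta_2}$ only via two \emph{distinct} destabilisers $Z_1\ni x$ and $Z_2\ni x$; the paper does not engage with this (and in fact only the second inclusion is ever invoked afterwards, in Proposition~\ref{prop:finitelymanyirreds}).

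That said, your proposed proof of the pointwise first inclusion has a genuine gap. After reducing to a curve component $C$ and the finitely many surfaces $S_{i_j}\supseteq C$ that can ever destabilise along the segment, you assert that the closed intervals $\{t:\phi_C(t)\ge0\}$ and $\{t:\phi_{S_{i_j}}(t)\ge0\}$ must together cover $[0,1]$, label this ``the crux of the argument'' --- and then stop. The decomposition $\phi_C=\phi_S+\psi$ and the passage to a resolution $\tilde S$ do not by themselves yield the required interleaving: the results you cite from \cite{SohaibDyrefelt} control \emph{which} curves on $\tilde S$ can satisfy $\psi\le0$, not the sign of $\psi$ at a prescribed parameter, and nothing you have written rules out $\phi_C,\phi_{S_{i_1}},\dots,\phi_{S_{i_m}}$ being simultaneously negative at some interior $t$. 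The sketch ends exactly where the real work would have to begin.
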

\begin{proof}
    This is an immediate consequence of 
    \begin{equation}\label{eqn:linearityofslope}
    \mu_{\alpha,t\beta_1 +(1-t) \beta_2}(Z) = t\mu_{\alpha,\beta_1}(Z)+(1-t)\mu_{\alpha,\beta_2}(Z).
    \end{equation}
    
\end{proof}

\begin{proposition}\label{prop:finitelymanyirreds}
    Let $\alpha, \beta_i$ be Kähler classes for $i=1,\dots,s$ on a compact Kähler 3-fold $X$ such that $\mu_{\alpha,\beta_i}\alpha - 2\beta_i$ is a big class for each $i$. Then as $\beta$ ranges over the convex hull of $\beta_1,\dots,\beta_s$, the collection of irreducible subvarieties of $X$ occurring as irreducible components $V_{\alpha,\beta}$ is a finite set.
\end{proposition}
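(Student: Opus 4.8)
The plan is to prove finiteness of the total collection $\bigcup_{\beta \in P}\{\text{irreducible components of } V_{\alpha,\beta}\}$, where $P := \mathrm{conv}(\beta_1,\dots,\beta_s)$, by systematically exploiting that both $\mu_{\alpha,\beta}(Z)$ and $\mu_{\alpha,\beta}$ are \emph{linear} in $\beta$ (as recorded in \eqref{eqn:linearityofslope}). Consequently the auxiliary classes $\tau_{p,\beta} := \mu_{\alpha,\beta}\alpha - p\beta$ (for $p=1,2$) depend linearly on $\beta$, so that $\tau_{p,\beta} = \sum_i \lambda_i \tau_{p,\beta_i}$ whenever $\beta = \sum_i \lambda_i \beta_i$ with $\lambda_i \geq 0$ and $\sum_i \lambda_i = 1$. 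First I would observe that this lets Theorem \ref{thm analyticity} be applied uniformly over all of $P$: since the big cone is convex and each $\tau_{2,\beta_i}$ is big by hypothesis, $\tau_{2,\beta}$ is big for every $\beta \in P$, and $\tau_{1,\beta} = \tau_{2,\beta} + \beta$ is big-plus-nef hence big. So each $V_{\alpha,\beta}$ is a proper analytic subset, with finitely many components, and its two-dimensional components are exactly the destabilising surfaces.

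The key engine will be the elementary fact that for big classes $\sigma_1,\dots,\sigma_s$ and scalars $\lambda_i \geq 0$ not all zero, one has $E_{nK}\!\left(\sum_i \lambda_i \sigma_i\right) \subseteq \bigcup_i E_{nK}(\sigma_i)$: choosing for each $i$ a Kähler current $T_i \in \sigma_i$ with analytic singularities and $E_+(T_i) = E_{nK}(\sigma_i)$ (by \cite[Theorem 3.17(ii)]{Boucksomthesis}), the positive combination $\sum_i \lambda_i T_i$ is a Kähler current in the class, and since Lelong numbers add, its positivity locus is $\bigcup_i E_+(T_i)$. Applying this with $\sigma_i = \tau_{2,\beta_i}$ shows that every destabilising surface at any $\beta\in P$ lies in the \emph{fixed} analytic set $W := \bigcup_i E_{nK}(\tau_{2,\beta_i})$, hence must be one of its finitely many surface components. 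Applying it with $\sigma_i = \tau_{1,\beta_i}$ shows every destabilising curve at any $\beta$ lies in the fixed set $W' := \bigcup_i E_{nK}(\tau_{1,\beta_i})$; those that coincide with a one-dimensional component of $W'$ contribute only finitely many curves.

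The hard part will be bounding, uniformly in $\beta$, the curve components $C$ of $V_{\alpha,\beta}$ that lie inside a fixed surface component $S'$ of $W'$, since a priori infinitely many such curves could appear as $\beta$ sweeps across $P$. Here I would use that $C$ being a \emph{component} (rather than being swallowed by a larger destabiliser) forces $S'$ to be non-destabilising, i.e. $\mu_{\alpha,\beta}(S') < \mu_{\alpha,\beta}$; combined with the destabilising inequality $(\mu_{\alpha,\beta}\alpha - \beta)\cdot C \leq 0$ and $\alpha\cdot C > 0$, this yields $(\mu_{\alpha,\beta}(S')\alpha - \beta)\cdot C \leq 0$. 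The crucial point is that $G_\beta := (\mu_{\alpha,\beta}(S')\alpha - \beta)\vert_{\tilde S'}$, pulled back to a fixed resolution $\tilde S'$ of $S'$, is again linear in $\beta$, and that each vertex class $G_{\beta_i}$ is \emph{big} on $\tilde S'$ by Lamari's criterion --- precisely the computation $\beta_i^2\cdot[S'] > 0$ and $\alpha\cdot\beta_i\cdot[S'] > 0$ already carried out in the proof of Theorem \ref{main thm 3-folds}. Writing $G_\beta = \sum_i \lambda_i G_{\beta_i}$ with $\lambda_i \geq 0$, the inequality $G_\beta \cdot \tilde C \leq 0$ forces $G_{\beta_i}\cdot \tilde C \leq 0$ for some vertex $i$, and by the surface result \cite[Proposition 3.1]{SohaibDyrefelt} each big class $G_{\beta_i}$ admits only finitely many irreducible curves of non-positive intersection. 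Taking the union over the finitely many vertices $i$, over the finitely many surface components $S'$, and adjoining the finitely many curves lying in the singular loci of the $S'$, bounds all curve components independently of $\beta$ and completes the argument.
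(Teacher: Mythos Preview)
Your proof is correct and follows the same overall architecture as the paper's: both reduce to a fixed finite set of candidate surfaces via the convex dependence of $\tau_{2,\beta}$ on $\beta$, and then handle curve components lying inside these surfaces by passing to a resolution and invoking the surface result from \cite{SohaibDyrefelt}. The ``key engine'' inclusion $E_{nK}(\sum \lambda_i\sigma_i)\subseteq \bigcup_i E_{nK}(\sigma_i)$ is exactly what underlies the paper's preliminary Lemma (the containment $V_{\alpha,\beta}\subseteq \bigcup_i V_{\alpha,\beta_i}$).

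There is one genuine tactical difference worth noting. In the curve-in-surface step, the paper fixes a surface $S_i$ and partitions the polytope $K$ into $K_1^{(i)}=\{\beta:\mu_{\alpha,\beta}(S_i)\le\mu_{\alpha,\beta}\}$ and its complement, then works at the \emph{new} vertices $\beta_j^{(i)}$ of the sub-polytope $K_1^{(i)}$, where Corollary~\ref{cor:finiteatsemistability} applies to the class $\mu_{\alpha,\beta_j^{(i)}}\alpha-\beta_j^{(i)}$. You instead work with the class $G_\beta=(\mu_{\alpha,\beta}(S')\alpha-\beta)|_{\tilde S'}$, which is big at every \emph{original} vertex $\beta_i$ regardless of whether $S'$ destabilises there, so no sub-polytope is needed. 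This is a clean simplification: it replaces the case split and the auxiliary vertices $\beta_j^{(i)}$ by a single linear-combination argument at the $\beta_i$ themselves. The paper's route, on the other hand, packages the surface step into the reusable Corollary~\ref{cor:finiteatsemistability}, which is phrased in terms of the slope inequality $\mu_{\alpha,\beta}(S)\le\mu_{\alpha,\beta}$ rather than the auxiliary class $G_\beta$.
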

\begin{proof}
    By the lemma, we have that 
    \begin{equation}\label{eqn:convexinclusion}
        V_{\alpha,\beta} \subseteq \bigcup_{i=1}^s V_{\alpha,\beta_i}
    \end{equation}
    as $\beta$ ranges $\mathrm{conv}(\beta_1,\dots,\beta_s)$. This implies that the irreducible surface components of $V_{\alpha,\beta}$ are among the finitely many irreducible surface components of $V_{\alpha,\beta_i}$ for $i = 1, \dots, s$. Therefore, we only need to prove that the irreducible curve components of $V_{\alpha,\beta}$ only take values in a finite set. 

    Denote by $S_1,\dots,S_n$  the finitely many irreducible surfaces occurring as irreducible components of all the $V_{\alpha,\beta_i}$ for $i = 1, \dots, s$, and let $C$ be an irreducible curve component of some $V_{\alpha,\beta}$ for $\beta \in \mathrm{conv}(\beta_1,\dots,\beta_s)$. Then, by (\ref{eqn:convexinclusion}) we have $C \subseteq V_{\alpha,\beta_i}$ for some $i$. If $C$ is not contained in any $S_i$, then $C$ can only be one of the finitely many irreducible curve components of $V_{\alpha,\beta_i}$. Thus, all the curves $C$ that occur as irreducible components of some $V_{\alpha,\beta}$ not contained in one of the $S_i$ are among the finitely many irreducible curve components of $V_{\alpha,\beta_i}$ for $i=1,\dots,s$. It remains to treat the case of curves lying in some $S_i$. 

    For each $i=1,\dots,n$, consider the following partition
    $$
    K:= \mathrm{conv}(\beta_1,\dots,\beta_s) = K^{(i)}_1 \cup K^{(i)}_2,$$
    
    where
    $$
    K^{(i)}_1:= \{ \beta \in K \ | \ \mu_{\alpha,\beta}(S_i) \leq \mu_{\alpha,\beta} \}, \quad K^{(i)}_2 := \{ \beta \in K \ | \ \mu_{\alpha,\beta}(S_i) > \mu_{\alpha,\beta} \}.
    $$
    Then, it is clear that $S_i$ occurs as an irreducible component of $V_{\alpha,\beta}$ for each $\beta \in K^{(i)}_2$. On the other hand, $\beta \mapsto \mu_{\alpha,\beta}(S_i)-\mu_{\alpha,\beta}$ is a linear map. Thus, the closed set $K^{(i)}_1$ is also the convex hull of finitely many points, say $\beta^{(i)}_1,\dots,\beta^{(i)}_{r_i}$. Now, if $\beta \in K^{(i)}_1$ and $C$ is an irreducible component of $V_{\alpha,\beta}$ lying in $S_i$, then, by definition, we have $\mu_{\alpha,\beta}(C) \geq \mu_{\alpha,\beta}$. But then (\ref{eqn:linearityofslope}) implies that $$
    \mu_{\alpha,\beta^{(i)}_j}(C) \geq \mu_{\alpha,\beta^{(i)}_j}
    $$
    for some $1\leq j \leq r_i$. But we also have that  $\mu_{\alpha,\beta^{(i)}_j}(S_i) \leq \mu_{\alpha,\beta^{(i)}_j}$ and thus, by the Corollary \ref{cor:finiteatsemistability}, there are only finitely many curves $C$ in $S_i$ such that $\mu_{\alpha,\beta^{(i)}_j}(C) \geq \mu_{\alpha,\beta^{(i)}_j}.$ Therefore, the irreducible components of $V_{\alpha,\beta}$ for $\beta \in K$ are among the following finite list:
    \begin{enumerate}
        \item the irreducible curve components of $E_{nK}(\mu_{\alpha,\beta_i}\alpha-\beta_i)$ for $i = 1, \dots, s$.
        \item the irreducible surface components $S_1,\dots, S_n$ of $E_{nK}(\mu_{\alpha,\beta_i}\alpha-2\beta_i)$ for $i = 1, \dots, s$.
        \item the finitely many curves $C$ in $S_i$ such that $$\mu_{\alpha,\beta^{(i)}_j}(C)\geq \mu_{\alpha,\beta^{(i)}_j}$$ for $i=1,\dots, n$, and $j = 1,\dots,r_i$.
    \end{enumerate}
\end{proof}

\subsection{Rigidity}
Aside from the production of a finite list of subvarieties that destabilise (or optimally destabilise) the J-equation, it is also desirable to note that they have certain special properties in common. Most notably, we have the following rigidity statement in the case of threefolds. 

\begin{proposition}\label{prop:rigidity}
    Let $(X,\alpha,\beta)$ and $V_{\alpha,\beta}$ be as in the statement of Theorem \ref{thm analyticity}, with $X$ projective. Then every irreducible component of $V_{\alpha,\beta}$ is rigid. More precisely, every irreducible surface component of $V_{\alpha,\beta}$ is the unique effective cycle representing its homology class, and every irreducible curve component $C$ of $V_{\alpha,\beta}$ satisfies the following: for every irreducible surface $S$ containing $C$, either $C$ is an irreducible component of the singular locus of $S$ or the strict transform of $C$ under any resolution of singularities of $S$ is a curve of negative self-intersection. 
\end{proposition}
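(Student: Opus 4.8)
The plan is to treat the surface components and the curve components by two entirely different mechanisms, exploiting the structural descriptions obtained in the proofs of Theorems \ref{main thm 3-folds} and \ref{thm analyticity}. Throughout write $\tau := \mu_{\alpha,\beta}\alpha - 2\beta$, which is big by hypothesis.

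For the surface components, I would first observe that each irreducible surface component $S$ of $V_{\alpha,\beta}$ is a divisorial (codimension one) component of the non-Kähler locus $E_{nK}(\tau)$, equivalently a prime divisor with $\nu(\tau,S)>0$, i.e. one of the components $S_1,\dots,S_k$ of the negative part $N(\tau)$ of the divisorial Zariski decomposition of $\tau$. By Boucksom's theory \cite{Boucksomthesis} the classes $[S_1],\dots,[S_k]$ are linearly independent in $H^{1,1}(X,\mathbb R)$. To show a given $S = S_i$ is the unique effective cycle in its homology class, suppose $D = \sum_j a_j T_j$ (with $a_j>0$ and $T_j$ distinct prime divisors) is effective with $[D]=[S_i]$, and pair both sides with the positive intersection product $\langle \tau^2\rangle$, a movable curve class. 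The two inputs are the orthogonality relation $\langle\tau^2\rangle\cdot S_j = 0$ for each negative-part component, and the fact that $\langle\tau^2\rangle$ pairs non-negatively with every effective divisor, vanishing against a prime divisor $T$ exactly when $T\subseteq E_{nK}(\tau)$ (via the identification of $\langle\tau^2\rangle\cdot T$ with the restricted volume $\mathrm{vol}_{X|T}(\tau)$ and the equality $E_{nK}(\tau)=\mathbb B_+(\tau)$ for big classes). From $0 = \langle\tau^2\rangle\cdot [S_i] = \sum_j a_j\,\langle\tau^2\rangle\cdot T_j$ and non-negativity of each summand, every $T_j$ must lie in $E_{nK}(\tau)$ and hence be one of $S_1,\dots,S_k$; linear independence of their classes then forces $D = S_i$.

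For an irreducible curve component $C$ of $V_{\alpha,\beta}$ and an arbitrary irreducible surface $S\supseteq C$, the crucial observation is that, because $C$ is itself an irreducible component of $V_{\alpha,\beta}$, the surface $S$ cannot be contained in $V_{\alpha,\beta}$; hence $\mu_{\alpha,\beta}(S) < \mu_{\alpha,\beta}$. Assuming $C$ is not a component of the singular locus $\mathrm{Sing}(S)$ (otherwise we are in the first alternative), I would take a resolution $f:\tilde X\to X$ as in the proof of Theorem \ref{main thm 3-folds}, with $\tilde S$ smooth and $\tilde C$ the strict transform of $C$. The same intersection computations show $(\mu_{\alpha,\beta}(S)\alpha-\beta)|_{\tilde S}$ is big by Lamari's criterion, and since $\mu_{\alpha,\beta}(S)<\mu_{\alpha,\beta}$ the class $(\mu_{\alpha,\beta}\alpha-\beta)|_{\tilde S} = (\mu_{\alpha,\beta}(S)\alpha-\beta)|_{\tilde S} + (\mu_{\alpha,\beta}-\mu_{\alpha,\beta}(S))f^*\alpha|_{\tilde S}$ remains big (big plus nef). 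On the other hand, the projection formula together with $\mu_{\alpha,\beta}(C)\geq\mu_{\alpha,\beta}$ gives $(\mu_{\alpha,\beta}\alpha-\beta)\cdot\tilde C \leq 0$. The surface case \cite[Proposition 3.1]{SohaibDyrefelt} then shows the finitely many irreducible curves with non-positive intersection against this big class all have negative self-intersection, whence $\tilde C^2<0$.

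The curve part is essentially a bookkeeping reduction to the already established surface theory, the one genuinely new ingredient being the maximality observation that rules out $S$ being destabilizing. I expect the main obstacle to be the surface part, and more precisely the positivity input $\langle\tau^2\rangle\cdot T = 0 \iff T\subseteq E_{nK}(\tau)$: one must check that the positive product localizes correctly and vanishes exactly along the divisorial components of the non-Kähler locus, for which I would invoke the identification with restricted volumes together with $E_{nK}(\tau)=\mathbb B_+(\tau)$ for big classes. Once this, and the linear independence of the negative-part classes, are in hand, uniqueness of the effective representative follows formally.
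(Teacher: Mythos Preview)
Your treatment of curve components matches the paper's almost exactly: both observe that any surface $S\supseteq C$ containing a curve \emph{component} of $V_{\alpha,\beta}$ cannot itself lie in $V_{\alpha,\beta}$ (so $\mu_{\alpha,\beta}(S)<\mu_{\alpha,\beta}$), then pass to a resolution, verify bigness of $(\mu_{\alpha,\beta}\alpha-\beta)|_{\tilde S}$ via the computations of Theorem~\ref{main thm 3-folds}, and conclude that $\tilde C$ sits in the negative part of a surface Zariski decomposition, hence $\tilde C^2<0$.

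For surface components the paper is much terser: it notes that a destabilising surface is a component of $N(\tau)$, hence exceptional, and invokes \cite[Proposition~3.13]{Boucksomthesis} directly. Your positive-product argument is a genuinely different route, but it has a gap. After pairing with $\langle\tau^2\rangle$ you correctly deduce each $T_j\subseteq E_{nK}(\tau)$, and then assert that $T_j$ must therefore be one of $S_1,\dots,S_k$, the components of $N(\tau)$. This implication fails in general: the divisorial part of $E_{nK}(\tau)=\mathbb B_+(\tau)$ can be strictly larger than that of the non-nef locus $E_{nn}(\tau)$, and only the latter coincides with the support of $N(\tau)$. (A big \emph{nef} class, for instance, has $N(\tau)=0$ while $E_{nK}(\tau)$ equals the null locus and may well contain prime divisors.) Boucksom's linear-independence statement applies to the components of $N(\tau)$, not to all divisorial components of $E_{nK}(\tau)$, so from $T_j\subseteq E_{nK}(\tau)$ alone you cannot feed into that independence and conclude $D=S_i$. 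The same conflation appears in your opening line for the surface case, where ``divisorial component of $E_{nK}(\tau)$'' is equated with ``$\nu(\tau,S)>0$''; the condition $\nu(\tau,S)>0$ characterises the divisorial part of $E_{nn}(\tau)$, not of $E_{nK}(\tau)$. The simplest repair is the paper's: once you know $S$ lies in $N(\tau)$, cite \cite[Proposition~3.13]{Boucksomthesis} and be done.
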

\begin{proof}
    Let $S$ be a surface component of $V_{\alpha,\beta}$. Then the conclusion about $S$ follows immediately from \cite[Proposition 3.13]{Boucksomthesis} noting that $S$ is then an irreducible component of the negative part of the divisorial Boucksom-Zariski decomposition, and hence exceptional. 
    Now let $C$ be a curve component of $V_{\alpha,\beta}$. Then, by the definition of $V_{\alpha,\beta}$, it follows that if $S^\prime$ is any irreducible surface that contains $C$, then $\mu_{\alpha,\beta}(S^\prime) < \mu_{\alpha,\beta}$. Since $X$ is projective, we can always find some $S^\prime$ with this property. But then the proof of \ref{thm analyticity} shows that either $C$ is contained in the singular locus of $S$ or the proper transform of $C$ is an irreducible component in the negative part of the Zariski decomposition of the class $\mu_{\alpha,\beta}(S^\prime)\alpha-\beta$ on any resolution of singularities of $S^\prime$. Thus, $C$ has negative self-intersection as a divisor in any resolution of singularities of $S^\prime$.
\end{proof}

\subsection{Extension of Theorem \ref{main thm 3-folds} to dimension $\geq 4$}

In the theory of divisorial Zariski decompositions the convex \emph{modified K\"ahler} (and \emph{modified nef}) cones in $H^{1,1}(X,\mathbb{R})$ play central roles (see Section \ref{Section prelim}). The same argument as for 3-folds can still be carried out on compact K\"ahler manifolds of arbitrary dimension as long as the restrictions of certain classes are always big. This is guaranteed by assuming a stronger positivity conditions, as per the following statement:

\begin{theorem} \label{main theorem higher dim}
Let $X$ be a compact K\"ahler manifold and $\alpha,\beta$ K\"ahler classes on $X$. Suppose moreover that $\mu_{\alpha,\beta}\alpha - p\beta \in \mathcal M_{p+1}\mathcal K$ is $(p+1)$-modified K\"ahler on $X$ for every $p = 1,2,\dots,n-1$. Then, there exist at most finitely many irreducible subvarieties $V\subseteq X$ such that 
    \[
    \mu_{\alpha,\beta}(V) \geq \mu_{\alpha,\beta}(X) = \mu_{\alpha, \beta}.
    \]
\end{theorem}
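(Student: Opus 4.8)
The plan is to reduce the destabilizing inequality to a purely numerical intersection condition, dimension by dimension, and then to locate the destabilizers inside the non-Kähler loci of the \emph{factor classes} $\tau_p := \mu_{\alpha,\beta}\alpha - p\beta$, using exactly the two preliminary lemmas. The whole argument is organized as a sweep over the dimension $p$ of the candidate subvariety, and—crucially—requires no semistability hypothesis.

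First I would rewrite the condition numerically. Fix an irreducible $V$ of dimension $p$, with $1 \le p \le n-1$. Since $\alpha$ is K\"ahler we have $\alpha^{p}\cdot[V] > 0$, so clearing this positive denominator in the definition of $\mu_{\alpha,\beta}(V)$ gives
$$
\mu_{\alpha,\beta}(V) \ge \mu_{\alpha,\beta} \iff (\mu_{\alpha,\beta}\alpha - p\beta)\cdot\alpha^{p-1}\cdot[V] \le 0 \iff \tau_p\cdot\alpha^{p-1}\cdot[V] \le 0.
$$
Thus the destabilizers of dimension $p$ are precisely the $p$-dimensional irreducible subvarieties on which $\tau_p$ intersects the $(p-1)$-st power of $\alpha$ non-positively. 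Next I would invoke positivity: by hypothesis $\tau_p \in \mathcal M_{p+1}\mathcal K$, and since $\mathcal M_{p+1}\mathcal K \subseteq \mathcal M_n\mathcal K = \mathcal B$ the class $\tau_p$ is big. Taking $\omega_1 = \cdots = \omega_{p-1}$ to be a fixed K\"ahler form representing $\alpha$ in Lemma \ref{lem:main lemma}, I obtain $\tau_p\cdot\alpha^{p-1}\cdot[V] > 0$ for every $p$-dimensional $V$ with $V\not\subseteq E_{nK}(\tau_p)$. Contraposing, any $p$-dimensional destabilizer must satisfy $V\subseteq E_{nK}(\tau_p)$.

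Finally I would count. Because $\tau_p$ is $(p+1)$-modified K\"ahler, Lemma \ref{lem:EnK} guarantees that every irreducible component of $E_{nK}(\tau_p)$ has dimension strictly less than $p+1$, hence at most $p$. An irreducible $V$ of dimension exactly $p$ contained in $E_{nK}(\tau_p)$ must therefore coincide with one of its (finitely many) top-dimensional components. Running $p$ from $1$ to $n-1$ and taking the union of these finite lists yields the finiteness claim.

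The point worth emphasising—and the only genuinely delicate issue—is that this clean dimension-by-dimension argument works precisely because the hypothesis controls each $\tau_p$ \emph{at its own codimension}: $(p+1)$-modified K\"ahlerness forces $E_{nK}(\tau_p)$ to have dimension $\le p$, so no $p$-dimensional destabilizer can hide inside a higher-dimensional component. This is exactly what breaks down in the weaker threefold setting of Theorem \ref{main thm 3-folds}, where $\tau_1 = \mu_{\alpha,\beta}\alpha - \beta$ is only assumed big: there $E_{nK}(\tau_1)$ may contain surfaces, destabilizing curves can accumulate inside such surfaces, and one is forced to restrict to each surface, establish bigness of the restricted class via Lamari's criterion, and then use semistability to rule out infinitely many curves. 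Under the present uniform positivity hypothesis no such restriction argument—nor any semistability assumption—is needed, and I expect the write-up to be correspondingly short.
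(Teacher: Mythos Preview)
Your proposal is correct and follows essentially the same approach as the paper: rewrite the destabilising inequality as $\int_V \tau_p\cdot\alpha^{p-1}\le 0$, apply Lemma~\ref{lem:main lemma} to force $V\subseteq E_{nK}(\tau_p)$, and then use Lemma~\ref{lem:EnK} to conclude that $V$ is one of the finitely many top-dimensional components of this set. The paper's write-up is indeed as short as you anticipate, and your additional remarks on why no restriction-to-surfaces or semistability argument is needed here (in contrast to Theorem~\ref{main thm 3-folds}) are a welcome clarification.
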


\begin{proof}
Let $1\leq p \leq n-1$ and $Z$ an irreducible analytic subvariety of $X$ of dimension $p$. Note that $\mu_{\alpha,\beta}(Z)\geq \mu_{\alpha,\beta}$ is equivalent to 
\[
\int_V (\mu_{\alpha,\beta}\alpha - p \beta)\cdot \alpha^{p - 1} \leq 0,
\]
which implies, by Lemma \ref{lem:main lemma} that $Z \subseteq E_{nK}(\mu_{\alpha,\beta}\alpha-p\beta)$. But the class $\mu_{\alpha,\beta}\alpha-p\beta \in \mathcal M_{p + 1}\mathcal K$ is a $(p+1)$-modified Kähler class. Thus, $Z$ must be one of the finitely many $p$-dimensional irreducible components of the proper analytic subset $E_{nK}(\mu_{\alpha,\beta}\alpha - p\beta)$ because this set does not contain any irreducible subvarieties of dimension greater than or equal to $p+1$ thanks to Lemma \ref{lem:EnK}.
\end{proof}

\noindent An improved version for optimal destabilisers, proving Theorem \ref{main thm 1 intro} (4), is the following:
\begin{theorem}\label{Thm main higher dim optimal}
Let $X$ be a compact K\"ahler manifold and $\alpha,\beta$ K\"ahler classes on $X$ such that $(X,\alpha,\beta)$ is not J-stable, i.e. $\Delta^\mathrm{pp}_{\beta}(\alpha) \leq 0$. Suppose moreover that 
\begin{equation} \label{Eq 1111} 
\left(\mu_{\alpha,\beta} - (n-p)\Delta^\mathrm{pp}_{\beta}(\alpha)\right)\alpha - p\beta \in \mathcal{M}_{p+1}\mathcal{K}_X
\end{equation} 
for all $p = 2,\dots,n-1$ (note that we do not require this for $p = 1$ here). Then $\mathrm{Dest}_{\alpha,\beta}^{\mathrm{opt}}$ is a non-empty and finite. 

Moreover, there is a uniform constant $\lambda > 0$ depending only on $(X,\alpha,\beta)$ such that all except finitely many subvarieties $V \subset X$ satisfy
    $$
    \frac{1}{\dim X-\dim V}\left(\mu_{\alpha,\beta} - \mu_{\alpha,\beta}(V)\right) \geq  
    \Delta^{\mathrm{pp}}_{\beta}(\alpha) + \lambda.
    $$
\end{theorem}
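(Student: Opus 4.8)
The plan is to mirror the structure of Theorem \ref{main theorem higher dim}, but now working at the optimal threshold rather than at the semistability threshold. Write $\Delta := \Delta^{\mathrm{pp}}_{\beta}(\alpha) \leq 0$ for brevity. The starting observation is that a subvariety $V$ of dimension $p$ lies in $\mathrm{Dest}^{\mathrm{opt}}_{\alpha,\beta}$ precisely when
\[
\frac{1}{n-p}\left(\mu_{\alpha,\beta} - \mu_{\alpha,\beta}(V)\right) = \Delta,
\]
which, after clearing the positive denominator $\alpha^{p}\cdot[V]$ and using the definition of $\mu_{\alpha,\beta}(V)$, is equivalent to the numerical identity $\int_V \big((\mu_{\alpha,\beta} - (n-p)\Delta)\alpha - p\beta\big)\cdot\alpha^{p-1} = 0$. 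Thus the optimal destabilizers of dimension $p$ are exactly the $p$-dimensional subvarieties on which the class $\tau_p^{\mathrm{opt}} := (\mu_{\alpha,\beta} - (n-p)\Delta)\alpha - p\beta$ integrates (against $\alpha^{p-1}$) to zero. By hypothesis \eqref{Eq 1111} this class is $(p+1)$-modified K\"ahler for $p = 2,\dots,n-1$, so Lemma \ref{lem:main lemma} forces any such $V$ into $E_{nK}(\tau_p^{\mathrm{opt}})$, and Lemma \ref{lem:EnK} guarantees this non-K\"ahler locus has only finitely many $p$-dimensional irreducible components. This handles finiteness in every dimension $2 \leq p \leq n-1$ at one stroke.

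First I would dispose of divisors ($p = n-1$) and the apparently excluded curve case ($p=1$). The divisor case is covered directly by the argument above. For curves I would argue that they cannot be optimal destabilizers unless they already sit inside the finitely many higher-dimensional optimal destabilizers, \emph{or} appear among finitely many curves by a restriction-to-a-surface argument exactly as in the proof of Theorem \ref{main thm 3-folds}: a destabilizing curve lies in some $E_{nK}$ and either is one of finitely many curve components or sits in a surface on which the relevant restricted class is big, yielding finiteness via negative-part-of-Zariski-decomposition finiteness. This is why the hypothesis is only imposed for $p \geq 2$: the $p=1$ slack is recovered geometrically rather than assumed. I would also need non-emptiness, which follows because unstability ($\Delta \leq 0$) means the infimum defining $\Delta$ is attained by \emph{some} $V$ (a standard compactness/attainment fact about the stability threshold from \cite{SD4}), and any attaining $V$ is by definition in $\mathrm{Dest}^{\mathrm{opt}}_{\alpha,\beta}$.

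For the final ``gap'' assertion, the idea is to exploit the \textbf{openness} of the modified K\"ahler cones, precisely as in Lemma \ref{lem:wallchamberlemma}. Since each $\tau_p^{\mathrm{opt}} = (\mu_{\alpha,\beta} - (n-p)\Delta)\alpha - p\beta$ lies in the open cone $\mathcal M_{p+1}\mathcal K$, there is room to perturb: for a sufficiently small $\lambda > 0$ the classes $(\mu_{\alpha,\beta} - (n-p)(\Delta+\lambda))\alpha - p\beta$ remain $(p+1)$-modified K\"ahler for all $p = 2,\dots,n-1$ simultaneously (finitely many open conditions). Re-running the finiteness argument with $\Delta$ replaced by $\Delta + \lambda$ then shows that only finitely many subvarieties $V$ satisfy $\frac{1}{n-\dim V}(\mu_{\alpha,\beta} - \mu_{\alpha,\beta}(V)) \leq \Delta + \lambda$, which is the stated gap.

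The main obstacle I anticipate is the curve case and the interaction between codimensions: the hypothesis deliberately omits $p=1$, so one cannot simply quote Lemma \ref{lem:EnK} for curves, and must instead reproduce the more delicate surface-restriction argument from Theorem \ref{main thm 3-folds} in arbitrary dimension, keeping careful track that the class restricted to a resolved surface remains big and that Zariski-decomposition finiteness applies. A secondary subtlety is verifying that the value $\Delta$ is genuinely attained so that $\mathrm{Dest}^{\mathrm{opt}}_{\alpha,\beta}$ is non-empty; this requires knowing the infimum in the definition of $\Delta^{\mathrm{pp}}_{\beta}(\alpha)$ is a minimum, for which one leans on the finiteness just established together with the fact that the slopes $\mu_{\alpha,\beta}(V)$ cannot accumulate at the infimum except along the finitely many loci detected by the non-K\"ahler loci.
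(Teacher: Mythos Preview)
Your approach is essentially the same as the paper's, with one cosmetic difference worth noting: the paper makes the substitution $\beta' := \beta - \Delta^{pp}_{\beta}(\alpha)\,\alpha$, observes that $\mu_{\alpha,\beta'}\alpha - p\beta' = (\mu_{\alpha,\beta} - (n-p)\Delta)\alpha - p\beta$ and that $\Delta^{pp}_{\beta'}(\alpha)=0$, thereby reducing the optimal-destabilizer problem for $(X,\alpha,\beta)$ to the ordinary destabilizer problem for the \emph{semistable} triple $(X,\alpha,\beta')$; this lets one invoke the machinery of Theorem~\ref{main theorem higher dim} (and the surface-restriction argument of Theorem~\ref{main thm 3-folds} for curves) essentially verbatim, and also explains cleanly why non-emptiness follows (the infimum is realized at semistability). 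You work directly with the shifted classes $\tau_p^{\mathrm{opt}}$ instead, which is equivalent but slightly less transparent.

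One small point to tighten: you need $\tau_1^{\mathrm{opt}} = (\mu - (n-1)\Delta)\alpha - \beta$ to be big in order even to speak of its non-K\"ahler locus in the curve case. This is automatic, since $\tau_1^{\mathrm{opt}} = \tau_2^{\mathrm{opt}} + (\beta - \Delta\alpha)$ with $\tau_2^{\mathrm{opt}}$ big by hypothesis and $\beta - \Delta\alpha$ K\"ahler, and moreover the same argument shows $E_{nK}(\tau_1^{\mathrm{opt}}) \subseteq E_{nK}(\tau_2^{\mathrm{opt}})$ has no components of dimension $\geq 3$, so destabilizing curves really do sit only in curve or surface components --- at which point the surface-restriction argument goes through. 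Your phrase ``cannot be optimal destabilizers unless they already sit inside the finitely many higher-dimensional optimal destabilizers'' is not quite right: the containing surfaces need not themselves be optimal destabilizers, they are merely components of $E_{nK}(\tau_1^{\mathrm{opt}})$.
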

\begin{remark}
Another way of thinking of this hypothesis is to consider $\mu_{\alpha,\beta'}\alpha - p\beta'$ for $\beta' := \beta 
- \Delta^{\mathrm{pp}}_{\beta}(\alpha)\alpha$, as used in the proof below. Note that the hypothesis \eqref{Eq 1111} is invariant under replacing $\beta$ by $\beta + r\alpha$ where $r \in \mathbb{R}$. In addition, since $(X,\alpha,\beta)$ is assumed unstable, by definition $\Delta^{\mathrm{pp}}_{\beta}(\alpha) < 0$, and hence the above theorem in particular holds under the stronger hypothesis that $\mu_{\alpha,\beta}\alpha - p\beta \in \mathcal{M}_{p+1}\mathcal{K}_X$.
\end{remark}
\begin{proof}[Proof of Theorem \ref{Thm main higher dim optimal}]
Write
$$
\beta' := \beta 
- \Delta^{\mathrm{pp}}_{\beta}(\alpha)\alpha
$$
and note that 
$$
\mu_{\alpha,\beta'}\alpha - p\beta'= 
\left(\mu_{\alpha,\beta} - (n-p)\Delta^{\mathrm{pp}}_{\beta}(\alpha)\right)\alpha - p\beta,
$$
as well as
$$
\Delta^{\mathrm{pp}}_{\beta'}(\alpha) = 0.
$$
Hence for this choice of $\beta'$, the triple $(X,\alpha,\beta')$ is semistable and also $\mu_{\alpha,\beta'}\alpha - p\beta'$ is $(p+1)-$modified K\"ahler for all $p = 2,3,\dots,n-1$, which implies that there is a finite number of optimally destabilising subvarieties of dimension $p \geq 2$. It remains to show that there are only finitely many optimal destabilising curves. By the hypothesis for $p = 2$ the class $\mu_{\alpha,\beta'}\alpha - 2\beta'$ is assumed to lie in $\mathcal{M}_{3}\mathcal{K}_X$, and hence $\mu_{\alpha,\beta'}\alpha - \beta'$ is in $\mathcal{M}_{3}\mathcal{K}_X$ as well. Thus the non-Kähler locus $E_{nK}(\mu_{\alpha,\beta'}\alpha - \beta')$ contains only irreducible components which are either curves or surfaces (see Lemma 2.3). Therefore any destabilising curve is either a) an irreducible curve component of $E_{nK}(\mu_{\alpha,\beta^\prime}\alpha - \beta')$, or b) contained in an irreducible surface component $S$ of $E_{nK}(\mu_{\alpha,\beta'}\alpha - \beta')$. In the former case there are finitely many such curves by proper analyticity of the non-K\"ahler locus (recall that any $3$-modified K\"ahler class is automatically big). In the latter case, if $S$ is smooth, restricting the K\"ahler classes $\alpha$ and $\beta$ to $S$ the main result of \cite{SohaibDyrefelt} shows that there are only finitely many destabilising curves for $(S,\alpha_{\vert S}, \beta'_{\vert S})$, i.e. such that $\mu_{\alpha,\beta'}(S) \leq \mu_{\alpha,\beta'}(C)$. The case of $S$ singular is treated as in the proof of Theorem \ref{main thm 3-folds}. Finally, we invoke semistability of $(X,\alpha,\beta')$, i.e. $\mu_{\alpha,\beta'}(V) \leq \mu_{\alpha,\beta'}(X)$ for every irreducible subvariety $V \subset X$. In particular $\mu(S) \leq \mu(X)$ and any destabilising curves $C \subset X$ satisfies $\mu_{\alpha,\beta'}(C) = \mu(X)$. Putting this together we have $$\mu_{\alpha,\beta'}(S) \leq \mu_{\alpha,\beta'}(X) = \mu_{\alpha,\beta'}(C).$$ 
Under the assumption that $(X,\alpha,\beta')$ is semistable, any destabilising curve for $(X,\alpha,\beta')$ must therefore also be a destabilising curve for $(S,\alpha_{\vert S}, \beta'_{\vert S})$ after restriction; and there are finitely many of those by \cite{SohaibDyrefelt}. In conclusion, the positivity hypothesis corresponding to $p = 1$ is not needed in the semistable case, thanks to Lamari's criterion on surfaces.
    
Combined, this implies that $\mathrm{Dest}^{\mathrm{opt}}_{\alpha,\beta'}$ is a finite set. It is moreover non-empty since the infimum defining $\Delta^\mathrm{pp}_{\beta'}(\alpha)$ is realised by a finite number of subvarieties (see also \cite{SD4}). Finally, since
$$
\mathrm{Dest}^{\mathrm{opt}}_{\alpha,\beta'} = \mathrm{Dest}^{\mathrm{opt}}_{\alpha,\beta},
$$
this concludes the proof of the first part.

For the second part, start from $(X,\alpha,\beta')$ which is semistable and such that $\mu_{\alpha,\beta'}\alpha - p\beta' \in \mathcal{M}_{p+1}\mathcal{K}_X$ for all $p = 2,3,\dots,n-1$. By openness of the modified K\"ahler cones, fix $\lambda > 0$ such that 
while $(X,\alpha,\beta' - \lambda\alpha)$ is no longer semistable, the classes $(\mu\alpha - p\beta' - (n-p)\lambda\alpha)_{\vert S_i}$ remain big on each of the finitely many irreducible surface components of $E_{nK}(\mu_{\alpha,\beta'}\alpha - \beta)$. We then conclude as in the proof of Theorem \ref{main thm 3-folds} that all except a finite number of subvarieties $V \subset X$, $\dim V = p$, satisfy
$$
\int_{V} (\mu_{\alpha,\beta'}\alpha - p\beta' - (n-p)\lambda\alpha) \wedge \alpha^{p-1} > 0.
$$
This implies that 
$$
\frac{\int_{V} (\mu_{\alpha,\beta'}\alpha - p\beta' - (n-p)\lambda\alpha) \wedge \alpha^{p-1}}{\int_V (n-p)\alpha^p} = \frac{\int_{V} (\mu_{\alpha,\beta}\alpha - p\beta) \wedge \alpha^{p-1}}{\int_V (n-p)\alpha^p} - \Delta^{\mathrm{pp}}_{\beta}(\alpha) - \lambda =
$$
$$
= \frac{1}{n-p} \left(\mu_{\alpha,\beta} - \mu_{\alpha,\beta}(V)\right) - \Delta^{\mathrm{pp}}_{\beta}(\alpha) - \lambda > 0.
$$
Rearranging terms yields the desired conclusion. 
\end{proof}

\begin{remark}
In Section \ref{Section example} we discuss the hypothesis of $p$-modified K\"ahler classes, and show that it holds in many natural situations.
\end{remark}

\section{Necessity of positivity conditions 
and first examples}\label{Section example}

\noindent In our previous work \cite{SohaibDyrefelt} on surfaces as well as that of \cite{DatarMeteSong} a key observation was that the $(1,1)$-class $\mu\alpha - 2\beta$ playing the central role in the proof, is \emph{always} big, based on Lamari's criterion \cite{Lamari}. We would expect that producing an analogue of this in higher dimension will be of importance to further work also on, for example, the conjectures \cite{DatarMeteSong} of Datar-Mete-Song. 

In particular, it is natural to ask when the hypotheses in the main theorem \ref{main thm 3-folds} are optimal. 

\subsection{Necessity of positivity assumptions on $\mu\alpha - p\beta$} \label{Subsection ex} As a first remark, we show that the hypotheses in \ref{main thm 1 intro} are necessary, in the sense that if we drop these hypotheses there are examples with an infinite number of destabilisers. In particular, the hypothesis that $\mu\alpha - (n-2)\beta$ be $(n-1)$-modified K\"ahler cannot be dropped in general, as illustrated already for blowups of $\mathbb{P}^n$ for $n\geq 3$:  
\begin{example}
\emph{(Infinitely many destabilisers if $\mu\alpha - (n-2)\beta$ is not $(n-1)$-modified K\"ahler)} 
Let $X = Bl_p \mathbb P^n$ be the blowup of $\mathbb P^n$ in a point $p \in \mathbb P^n$. Let $\alpha,\beta$ be Kähler classes with $\alpha = aH-bE = 3H-\frac{1}{9}E, \beta = cH-dE= \frac{1}{3}H-\frac{1}{9}E$ where $H$ is the pullback of the class of a linear hyperplane and $E$ is the exceptional divisor. One checks that
\[
\mu_{\alpha,\beta} = n \frac{\alpha^{n-1}\cdot\beta\cdot [X]}{\alpha^n\cdot [X]} = n\frac{a^{n-1}c-b^{n-1}d}{a^n-b^n} = n\frac{3^{n-2}-\frac{1}{9^n}}{3^n-\frac{1}{9^n}} = n\frac{3^{3n-2}-1}{3^{3n}-1} < \frac{n}{3}.
\]
On the other hand, the class of any linear hyperplane $P\cong \mathbb{P}^{n-2}$ contained in $E\cong \mathbb{P}^{n-1}$ is $-[E]\cdot[E]$ and using this, we can calculate 
\[
\mu_{\alpha,\beta}(P) = (n-2)\frac{\alpha^{n-3}\cdot \beta \cdot [P]}{\alpha^{n-2}\cdot [P]} = (n-2)\frac{d}{b}=n-2.
\]
If $n \geq 3$, then we see that $\mu_{\alpha,\beta} < \frac{n}{3} \leq n-2=\mu_{\alpha,\beta}(P)$ and so there are infinitely many destabilisers in this case. However, note that the inequality $\mu_{\alpha,\beta}\leq \mu_{\alpha,\beta}$ is equivalent to 
\[
\int_P (\mu_{\alpha,\beta}\alpha-(n-2)\beta)\cdot \alpha^{n-2}
\leq 0.
\]
Since $E\cong \mathbb{P}^{n-1}$ is a projective space, this shows that $(\mu_{\alpha,\beta}\alpha - (n-2)\beta)|_E$ is not Kähler, and hence not big. It follows that $\mu_{\alpha,\beta}\alpha-(n-2)\beta$ is not $(n-1)$-modified Kähler by \cite[Proposition 2.4]{Boucksomthesis}. (Recall that $(n-1)$-modified Kähler is simply called modified Kähler in \cite{Boucksomthesis}.)
\end{example} 

\noindent The above simple example motivates a deeper study of the imposed hypotheses that $\mu\alpha - p\beta$ be $(p+1)$-modified K\"ahler. Other aspects are investigated in the following subsection. 

\subsection{Pairwise distinct 
modified K\"ahler cones: The Cutkoski construction and examples of Wu} \label{Subsection ex Cutkosky} 
Here, we recall an example (a special case of an example found in \cite{Wu}) of a manifold $X$ of dimension $n$ such that on $X$ the cones $\mathcal M_{p}\mathcal K$ of $p$-modified Kähler classes are all pairwise distinct (except, of course, $\mathcal M_1\mathcal K = \mathcal M_0 \mathcal K = \mathcal K$). To begin with, we proceed in slightly more generality than we need.
Let $Y$ be a smooth projective variety of dimension $d_0$ and $L$ a very ample line bundle on $Y$. Let $E$ denote the vector bundle \[E:= \mathcal O_Y \oplus (L^{\otimes a_1})^{\oplus b_1} \oplus \dots \oplus (L^{\otimes a_r})^{\oplus b_r}\] on $Y$, where $a_0:= 0 < a_1 < a_2 < \dots < a_{r}$ is a strictly increasing sequence of positive integers and each $b_i$ is a positive integer. We shall denote by $X:= \mathbb P (E^\vee)$ the projective bundle of one-dimensional subspaces of the fibres of $E\to Y$ and $\pi:X\to Y$ the associated projection map. Note that $X$ has dimension $n = d_0+b_1+\dots+b_r$. Then (for $r \geq 1$) we have \[
H^{1,1}(X,\mathbb R) = \mathbb R H \oplus \pi^* H^{1,1}(Y,\mathbb R) 
\] where $H = \mathcal O_{\mathbb P(E)}(1)$ is the dual of the tautological sub-line bundle of $\pi^* E$. For $i,j = 1, \dots, r$, denote by $D_{ij}$ the divisor associated to the (dual of the) surjection \[
E \twoheadrightarrow \mathcal O_Y \oplus (L^{\otimes a_1})^{\oplus b_1} \oplus \dots \oplus \widehat{(L^{\otimes a_i})} \oplus \dots \oplus (L^{\otimes a_r})^{\oplus b_r}.
\] where $\widehat{(L^{\otimes a_i})}$ means we omit the $j$-th  summand equal to $L^{\otimes a_i}$. Note that the class of $D_{ij}$ is equal to $H-a_iL$ (where we denote by $L$ again the pullback $\pi^*L$). We can apply \cite[Proposition 4]{Wu} to deduce that the class $aL+bH$ is pseudoeffective (respectively nef) on $X$ if and only if $b\geq 0$ and $a+a_r b \geq 0$ (respectively $b \geq 0$ and $a\geq 0$). Note that if $aL+bH$ is pseudoeffective, then we can write \[
aL+bH = (a+a_r b)L+bD_{rj} \textrm{ for any } j = 1, \dots, b_r
\] whence we see (since $(a+a_r b)L$ is nef) that the non-nef locus of every pseudoeffective class of the form $aL+bH$ satisfies \[
E_{nn}(aL+bH) \subseteq \bigcap_{j=1}^{b_r} D_{rj}.
\] In fact, the non-nef loci of pseudoeffective classes are much more constrained on $X$. Indeed, whenever $a+a_pb \geq 0$ for any $p=0,1,\dots,r$, we can write the decomposition \[
aL+bH = (a+a_pb)L + bD_{pj} \textrm{ for any } j = 1, \dots, b_p.
\] This shows that $\nu(aL+bH,x) = 0$ for $x \not\in V_p$  where \[V_p := \bigcap_{i\geq p} \bigcap_{j=1}^{b_i} D_{ij}.\] In other words, if $a+a_pb\geq 0$ and $b\geq 0$, the non-nef locus of $aL+bH$ is contained in $V_p$, which is a $d_p$-dimensional smooth subvariety of $X$, with $d_p := d_0 + b_1 + \dots + b_p$. This proves that \[
\mathcal M_{d_p+1}\mathcal K \supseteq \{ aL+bH \ \vert \ a+a_pb > 0, b > 0 \}. 
\] On the other hand, let $aL+bH$ be a big class, i.e. $a+a_rb > 0$ and $b>0$. Then, according to the proof of \cite[Proposition 5]{Wu} we get that \[
\nu(aL+bH,V_{p+1})\geq \min \{ t\geq 0 \ \vert \ 0 \in (a + t[a_{p+1},a_{r}] + (b-t)[0,a_p]) \}. 
\] This implies that if $a+a_p b < 0$ then we have \[\nu(aL+bH,V_{p+1}) \geq -\frac{a+a_pb}{a_{r}-a_p}>0.\] Since $d_{p+1} = \dim V_{p+1} \geq d_p + 1$, this shows that $aL+bH \not\in \mathcal M_{d_p+1}\mathcal K$ if $a+a_pb<0$. Thus, we have shown that for $p=1,\dots, n$ we have \begin{equation*}
\mathrm{span}_\mathbb R (L,H)\cap \mathcal M_{p}\mathcal K = \{ aL+bH \ \vert \ a+a_{q-1}b > 0, b > 0\}
\end{equation*}
where $q$ is the unique integer satisfying $d_{q-1} < p \leq d_q$. It is instructive to examine special cases of this construction.
\begin{enumerate}
    \item Let $Y$ be a Riemann surface, $r=n-1$, $L$ any line bundle on $Y$ of degree $d>0$. Set $b_1=b_2=\dots=b_{n-1}=1$, then $X=\mathbb P( \mathcal O_Y \oplus (L^\vee)^{\otimes a_1} \oplus \dots \oplus (L^\vee)^{\otimes a_{n-1}})$ is an $n$-dimensional projective manifold.  We see that the cones $\mathcal M_p \mathcal K$ are given by \begin{equation}\label{eqn:ConeCalculation}
    \mathcal M_p\mathcal K_X = \{aL+bH \ \vert \ a+a_{p-1}b>0, b>0 \} 
    \end{equation} and hence are all pairwise distinct for all $p=1,\dots, n$. Intersection theory on $X$ is given by the formulas \begin{equation}\label{eqn:IntersectionTheory}H^n = d\sum_{i=1}^{n-1}a_i, \quad H^{n-1}\cdot L = d, \quad H^{n-i}\cdot L^i = 0 \textrm{ for }i\geq 2.\end{equation}
    \item In the above special case, let $Y= \mathbb P^1, L= \mathcal O_{\mathbb P^1}(1), r=2$ and $a_1 = 1, a_2 = 3$. Then, $X=\mathbb P(\mathcal O_{\mathbb P^1} \oplus \mathcal O_{\mathbb P^1}(-1)\oplus \mathcal O_{\mathbb P^1}(-3))$. Let $\alpha = L + H$ and $\beta = L + bH$ for $b>0$. Let us examine all the possible destabilising subvarieties for the $J$-equation on $(X,\alpha,\beta)$. Denote by $S$ and $C$ respectively the subvarieties
    \[
    S:= \mathbb P(\mathcal O_{\mathbb P^1} \oplus \mathcal O_{\mathbb P^1}(-1)) \subseteq X, \quad C:= \mathbb P(\mathcal O_{\mathbb P^1})\subseteq X.
    \]
    Making use of \eqref{eqn:ConeCalculation} and \eqref{eqn:IntersectionTheory}, one can show that the class $\mu_{\alpha,\beta}\alpha-2\beta$ is big if and only if $b>1/15$. In the notation of Theorem \ref{thm analyticity} we then have 
    \[
    V_{\alpha,\beta}= 
    \begin{cases}
        S \textrm{ for } 1/15 <b\leq 5/26\\
        C \textrm{ for } 5/26<b\leq 2/9\\
        \varnothing \textrm{ for } b > 2/9.
    \end{cases}
    \]
    In fact, we have  \[
    \operatorname{Dest}_{\alpha,\beta}= 
    \begin{cases}
        \{S,C\} \textrm{ for } 1/15 <b\leq 5/26\\
        \{C\}\textrm{ for } 5/26<b\leq 2/9\\
        \varnothing \textrm{ for } b > 2/9,
    \end{cases}
    \] 
    whereas for $b\leq 1/29$, $\operatorname{Dest}_{\alpha,\beta}$ contains infinitely many curves. 
    \item Let $Y$ be a compact Kähler manifold of dimension $n$, $L$ an ample line bundle on $Y$, $r=1$ and $b_1 = m+1$. Then $X= \mathbb P(\mathcal O_Y \oplus (L^\vee)^{\oplus (m+1)})$ has dimension $n+m+1$. Then, for any big class of the form $aL+bH$, the non-nef locus is empty if and only if $a+b\geq0, b > 0$, and equal to $P= \mathbb P(\mathcal O_Y)\subseteq X$ if and only if $a+b <0, b>0$. Thus, (writing ${\mathcal M_p}^\prime  \mathcal K :=  \mathcal M_p\mathcal K\cap \mathrm{span}(H,L)$) we have 
    \[
    \mathcal M_{0}^\prime\mathcal K = \mathcal M_{1}^\prime\mathcal K = \dots = \mathcal M_{n-1}^\prime\mathcal K = \mathcal M_{n}^\prime\mathcal K \subsetneqq \mathcal M_{n+1}^\prime\mathcal K = \dots = \mathcal M_{n+m+1}^\prime\mathcal K.
    \] In this situation, Datar-Mete-Song \cite{DatarMeteSong} have shown that if $\alpha = L + aH$, $\beta = L + bH$, for $a,b>0$, and  $\omega_0$, $\theta$ are fixed Kähler metrics in $\alpha,\beta$ respectively satisfying a \emph{Calabi-ansatz}, then, in the case that $(X,\alpha,\beta)$ is $J$-unstable, the $J$-flow with initial metric $\omega_0$ converges smoothly away from $P$. It therefore seems natural to ask whether, under suitable hypotheses, in the setting of Theorems \ref{thm analyticity} and \ref{main theorem higher dim} the $J$-flow should always converge smoothly away from the union of some collection of destabilising subvarieties. 
\end{enumerate}

\subsection{Sufficient conditions when the main results \ref{main thm 1 intro}, \ref{Cor finite test intro}, \ref{cor wall chamb dec intro}, \ref{main thm gMA intro}  apply } \label{Subsection optimal dest Jstab} 

\noindent  We finish this section by discussing a sufficient condition for when Theorem \ref{main thm 3-folds} and its higher dimensional analogue \ref{main theorem higher dim} apply. Moreover, note that no examples in this paper exclude the possibility that there may always exist a finite number of \emph{optimal} destabilisers. Motivated by this, we discuss some results also regarding optimal destabilising subvarieties.

To introduce our condition we associate to a pair $(\alpha,\beta)$ of K\"ahler classes on $X$, a third nef class $\eta := \mathrm{proj}_{\alpha}(\beta)$, when it exists, which we call the \emph{projection of $\beta$ through $\alpha$}, defined as the unique $(1,1)$-class on $X$ which is nef, not K\"ahler, and of the form $(1-t)\beta + t\alpha$ for some $t > 0$. Equivalently, $\eta$ is the second boundary point of the nef cone on the affine
line through \(\beta\) and \(\alpha\). Note that while this is not well-defined for all pairs of K\"ahler classes $(\alpha,\beta)$, it can always be defined up to rescaling $(\alpha, \lambda\beta)$ for some $\lambda > 0$ large enough as necessary. 
The set of (optimal) destabilising subvarieties on $(X,\alpha,\lambda\beta)$ is then exactly equal to the set of (optimal) destabilising subvarieties for $(X,\alpha,\beta)$, since 
$$
\Delta^{\mathrm{pp}}_{\lambda\beta}(\alpha) = \lambda \Delta^{\mathrm{pp}}_{\beta}(\alpha),
$$
and therefore rescaling, if necessary, is harmless for what we prove in this section.

In what follows, we always implicitly assume that $(\alpha,\beta)$ are such that the associated class $\eta$ is well-defined.

This allows us to formulate the following result, which provides a useful sufficient condition for when our main results for the J-equation apply (Theorem \ref{main thm 1 intro} and Corollaries \ref{Cor finite test intro} and \ref{cor wall chamb dec intro}): 

\begin{proposition} \label{Prop suff cond}
Suppose that $X$ is a compact K\"ahler and $\alpha_0,\beta$ are K\"ahler classes such that $\eta := \mathrm{proj}_{\alpha_0}(\beta)$ is $k$-modified K\"ahler for some $3 \leq k \leq n-1$. Then there exists a K\"ahler class $\alpha$ such that 
\begin{itemize}
    \item $\eta = \mathrm{proj}_{\alpha}(\beta)$
    \item $(X,\alpha,\beta)$ is unstable 
    \item The classes $$
\gamma(p,\alpha,\beta) := \left(\mu_{\alpha,\beta} - (n-p)\Delta^\mathrm{pp}_{\beta}(\alpha)\right)\alpha - p\beta
$$
are $k$-modified K\"ahler for each $p = 1,2,\dots,n-1$.
\end{itemize}
In particular, Theorem \ref{main thm 1 intro} and Corollaries \ref{Cor finite test intro} and \ref{cor wall chamb dec intro} all apply to  $(X,\alpha,\beta)$. Moreover, this holds true for all $\alpha'$ of the form $(1-r)\eta + r\beta$ with $r > 0$ small enough.
\end{proposition}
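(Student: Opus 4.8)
The plan is to take $\alpha$ on the open segment joining $\eta$ to $\beta$ and to let it run toward $\eta$. Explicitly, set $\alpha = \alpha(r) := (1-r)\eta + r\beta$ for $r \in (0,1)$; since $\eta$ is nef and $\beta$ is Kähler, each $\alpha(r)$ is Kähler. First I would verify the projection condition: solving $(1-t)\beta + t\alpha(r) = \eta$ yields $t = (1-r)^{-1} > 0$, and since the ray through $\beta$ and $\alpha(r)$ (which starts at the interior point $\beta$) exits the convex open cone $\mathcal K$ exactly once, at $\eta$, we get $\eta = \mathrm{proj}_{\alpha(r)}(\beta)$ for every $r \in (0,1)$. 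The class promised by the proposition will be $\alpha(r)$ for $r$ small, and the final clause about all $\alpha' = (1-r)\eta + r\beta$ with small $r$ is then built into the argument.

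Next I would extract instability directly from the non-Kählerness of $\eta$. As $\eta$ is nef but not Kähler, the Demailly--Paun characterisation of the Kähler cone \cite{DemaillyPaun} produces an irreducible (necessarily proper, since $\eta^{n}>0$) subvariety $V_0$, of dimension $d \ge 1$, with $\eta^{d}\cdot[V_0] = 0$. Let $m := \max\{j : \eta^{j}\beta^{d-j}\cdot[V_0] > 0\} \le d-1$. Expanding $\alpha(r)^{d}\cdot[V_0]$ and $\alpha(r)^{d-1}\beta\cdot[V_0]$ in powers of $r$ and comparing lowest-order terms gives
\[
\mu_{\alpha(r),\beta}(V_0) = (d-m)\,r^{-1}\big(1+o(1)\big)\longrightarrow +\infty,
\]
whereas $\mu_{\alpha(r),\beta}\to \mu_{\eta,\beta} = n\,\eta^{n-1}\beta/\eta^{n}$ remains finite because $\eta^{n} > 0$ ($\eta$ is big, being $k$-modified Kähler). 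Thus $(X,\alpha(r),\beta)$ is unstable for small $r$, and, more usefully,
\[
\Delta^{\mathrm{pp}}_{\beta}(\alpha(r)) \le \tfrac{1}{n-d}\big(\mu_{\alpha(r),\beta}-\mu_{\alpha(r),\beta}(V_0)\big)\longrightarrow -\infty.
\]

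The core of the proof is the third bullet. Writing $\delta := \Delta^{\mathrm{pp}}_{\beta}(\alpha(r))$ and $c(p) := \mu_{\alpha(r),\beta}-(n-p)\delta$, one has $\gamma(p,\alpha,\beta) = c(p)\alpha(r) - p\beta$. Since $\delta \to -\infty$ and $n-p \ge 1$, we get $c(p)\to+\infty$ for each $p=1,\dots,n-1$, so after normalising,
\[
\frac{1}{c(p)}\gamma(p,\alpha,\beta) = \alpha(r)-\frac{p}{c(p)}\beta = (1-r)\eta + \Big(r-\frac{p}{c(p)}\Big)\beta \longrightarrow \eta \qquad (r\to 0).
\]
As $\mathcal M_{k}\mathcal K$ is an open cone containing $\eta$, this convergence places $\gamma(p,\alpha,\beta)$ in $\mathcal M_{k}\mathcal K$ once $r$ is small; taking the minimum of the finitely many resulting thresholds over $p$ yields a single $r^{\star}$ valid for all $p$ at once, which is the third bullet. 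The applicability of Theorem \ref{main thm 1 intro} and Corollaries \ref{Cor finite test intro}, \ref{cor wall chamb dec intro} then follows by matching these against their hypotheses: $\mathcal M_{k}\mathcal K \subseteq \mathcal M_{p+1}\mathcal K$ whenever $k \le p+1$ (automatic for all $p\ge 2$ when $k=3$, the case of chief interest), while in general $\eta \in \mathcal M_{k}\mathcal K$ already forbids destabilisers of dimension $\ge k$.

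The step I expect to be the main obstacle is the control of the threshold $\Delta^{\mathrm{pp}}_{\beta}(\alpha(r))$, which is an infimum over all---possibly infinitely many---subvarieties: one must be certain that it genuinely diverges to $-\infty$, and here the existence of the null subvariety $V_0$ together with the $r^{-1}$ blow-up of its slope is decisive, since it furnishes a single explicit upper bound tending to $-\infty$. The conceptually delicate point is that it is precisely this divergence of $c(p)$ that drags $\gamma(p,\alpha,\beta)$ back toward $\eta$ and hence into the open cone $\mathcal M_{k}\mathcal K$. A minor technical nuisance is the bookkeeping of the binomial asymptotics when $m < d-1$, where the lowest-order balance in the ratio defining $\mu_{\alpha(r),\beta}(V_0)$ shifts but the divergence persists.
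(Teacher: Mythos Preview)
Your proof is correct and complete, but it follows a genuinely different route from the paper's. The paper establishes an auxiliary double inequality (Lemma~\ref{Lemma double ineq})
\[
\mu_{\alpha_t,\beta} - (n-1)t^{-1} \;\le\; \Delta^{\mathrm{pp}}_{\beta}(\alpha_t) \;\le\; \tfrac{1}{n-1}\bigl(\mu_{\alpha_t,\beta} - t^{-1}\bigr),
\]
whose proof draws on the linearity of $\Delta^{\mathrm{pp}}_{\beta}$ in the $\beta$-argument from \cite{SD4} together with the implication ``J-stable $\Rightarrow$ $\mu\alpha-\beta$ K\"ahler'' from \cite{CollinsGabor, DonaldsonJobservation}. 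The paper then plugs this explicit upper bound into $\gamma(p,\alpha_R,\beta)$ and manipulates the resulting expression algebraically to land in the cone. You instead invoke Demailly--Paun \cite{DemaillyPaun} to produce a single concrete null subvariety $V_0$ of $\eta$, compute the blow-up rate $\mu_{\alpha(r),\beta}(V_0)\sim (d-m)r^{-1}$ by binomial asymptotics, and conclude $\Delta^{\mathrm{pp}}_{\beta}(\alpha(r))\to-\infty$ directly from the definition as an infimum; the normalisation $c(p)^{-1}\gamma(p,\alpha,\beta)\to\eta$ then finishes by openness. Your argument is more elementary and self-contained (it avoids the machinery of \cite{SD4} and the existence theory for the J-equation), while the paper's approach yields the sharper two-sided quantitative control of Lemma~\ref{Lemma double ineq}, which is of independent interest and is re-used elsewhere in Section~\ref{Subsection optimal dest Jstab}.
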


\begin{remark}
Note that it is easy to find many examples when $\mathrm{proj}_{\alpha}(\beta)$ is big \emph{(i.e $n$-modified K\"ahler where $n := \dim X$)} as long as we consider a K\"ahler manifold $X$ for which the inclusion of the K\"ahler cone in the big cone is strict, i.e. $\mathcal{K}_X \neq \mathcal{B}_X$. 
\end{remark}

\begin{proof}[Proof of Proposition \ref{Prop suff cond}]

Fix $\alpha, \beta \in \mathcal{K}_X$ and $\eta := \mathrm{proj}_{\alpha}(\beta) \in \partial \mathcal{K}_X$ the projection of $\beta$ through $\alpha$. By definition of $\mathrm{proj}_{\alpha}(\beta)$ we can then write $\alpha := \alpha_R$, where 
$$\alpha_R := (1-R)\eta + R\beta$$ for some $R \in (0,1]$.  

We then have the following useful double inequality: 
\begin{lemma}\label{Lemma double ineq}

Suppose that $\alpha_t = (1-t)\eta + t\beta$ with $t \in (0,1)$. If there exists a $t_0 \in (0,1)$ such that $\Delta^\mathrm{pp}_{\beta}(\alpha_{t_0}) \leq 0$, then we have the following double inequality 
\begin{equation} \label{eq double ineq}
\mu_{\beta,\alpha_t} - (n-1)t^{-1} \leq \Delta^\mathrm{pp}_{\beta}(\alpha_t) \leq \frac{1}{n-1} \left(\mu_{\beta,\alpha_t} - t^{-1} \right)
\end{equation}
for all $t \in (0,1]$.
\end{lemma}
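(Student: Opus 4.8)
The whole argument hinges on re-expressing the pair $(\alpha_t,\beta)$ through $(\alpha_t,\eta)$; throughout I write $\mu_{\alpha_t,\beta}=n\,(\alpha_t^{n-1}\cdot\beta)/(\alpha_t^n)$ for the slope of $X$ with base $\alpha_t$ (this is the quantity that enters the bounds). The plan is to solve $\alpha_t=(1-t)\eta+t\beta$ for $\beta=t^{-1}\alpha_t-(t^{-1}-1)\eta$ and substitute into the slope functional, obtaining for every irreducible $V$ of dimension $p$ the identity
$$
\mu_{\alpha_t,\beta}(V)=p\,t^{-1}-(t^{-1}-1)\,\mu_{\alpha_t,\eta}(V),
$$
whose special case $V=X$ reads $t\,\mu_{\alpha_t,\beta}+(1-t)\,\mu_{\alpha_t,\eta}=n$ (this is just linearity of the slope in its second argument). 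Setting $g_V(t):=\tfrac{1}{n-p}\big(\mu_{\alpha_t,\beta}-\mu_{\alpha_t,\beta}(V)\big)$, so that $\Delta^{\mathrm{pp}}_\beta(\alpha_t)=\inf_V g_V(t)$, subtraction yields the master formula
$$
g_V(t)=t^{-1}-(t^{-1}-1)\,\frac{\mu_{\alpha_t,\eta}-\mu_{\alpha_t,\eta}(V)}{n-p}.
$$
The two inequalities to be proved correspond exactly to trapping the last fraction between $\tfrac{1}{n-1}\mu_{\alpha_t,\eta}$ and $\mu_{\alpha_t,\eta}$.

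The lower bound I would obtain uniformly in $V$, with no appeal to the hypothesis. Since $\eta$ is nef and $\alpha_t$ is Kähler one has $\mu_{\alpha_t,\eta}(V)\geq0$, and $n-p\geq1$ for proper $V$, so the last fraction is $\leq\mu_{\alpha_t,\eta}$; as $t^{-1}-1>0$ the master formula gives $g_V(t)\geq t^{-1}-(t^{-1}-1)\mu_{\alpha_t,\eta}$, and the identity $t\,\mu_{\alpha_t,\beta}+(1-t)\,\mu_{\alpha_t,\eta}=n$ rewrites the right-hand side as $\mu_{\alpha_t,\beta}-(n-1)t^{-1}$. Taking the infimum over $V$ gives the lower bound.

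For the upper bound the idea is to saturate it with a single $\eta$-degenerate test locus. A curve $C$ with $\eta\cdot C=0$ has $\mu_{\alpha_t,\eta}(C)=0$ and $\alpha_t\cdot C=t\,\beta\cdot C$, hence $\mu_{\alpha_t,\beta}(C)=t^{-1}$, and the master formula collapses to $g_C(t)=\tfrac{1}{n-1}(\mu_{\alpha_t,\beta}-t^{-1})$; the estimate $\Delta^{\mathrm{pp}}_\beta(\alpha_t)\leq g_C(t)$ then holds for all $t$ simultaneously. Any positive-dimensional $V$ with $\mu_{\alpha_t,\eta}(V)=0$, or even a sequence of curves with $\mu_{\alpha_t,\eta}(C_k)\to0$, would serve equally well. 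The real content is therefore the production of such a locus, and this is the only place the hypotheses are used: since $\eta=\mathrm{proj}_\alpha(\beta)$ is nef but not Kähler it fails Seshadri positivity, so on a projective $X$ Kleiman's criterion directly yields curves with $\eta\cdot C/\alpha_t\cdot C\to0$, whereas in general the assumption $\Delta^{\mathrm{pp}}_\beta(\alpha_{t_0})\leq0$ furnishes a genuine destabilising subvariety. I expect the main obstacle to be verifying that an (optimal) destabiliser must lie in the null locus of $\eta$, i.e.\ satisfies $\mu_{\alpha_t,\eta}(V_0)=0$, so that it can play the role of $C$ above. To finish I would treat $t=1$ directly, where $\alpha_1=\beta$ and both bounds equal $1=\Delta^{\mathrm{pp}}_\beta(\beta)$, and note that for $n=2$ the two bounds coincide, recovering the sharp surface identity $\Delta^{\mathrm{pp}}_\beta(\alpha_t)=\mu_{\alpha_t,\beta}-t^{-1}$.
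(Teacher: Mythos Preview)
Your substitution $\beta=t^{-1}\alpha_t-(t^{-1}-1)\eta$ and the resulting master formula
\[
g_V(t)=t^{-1}-(t^{-1}-1)\,\frac{\mu_{\alpha_t,\eta}-\mu_{\alpha_t,\eta}(V)}{n-p}
\]
are correct, and your derivation of the lower bound from $\mu_{\alpha_t,\eta}(V)\geq 0$ (nefness of $\eta$) together with $n-p\geq 1$ is clean, elementary, and in fact sharper than what the paper does: the paper only says the lower bound ``is proven similarly'' to the upper bound, whereas your argument makes no use of the hypothesis $\Delta^{pp}_\beta(\alpha_{t_0})\leq 0$ and needs nothing beyond nefness.

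For the upper bound, however, there is a genuine gap. Your reduction is accurate: what one needs is a curve $C$ (or sequence $C_k$) with $\eta\cdot C=0$ (resp.\ $\eta\cdot C_k/\beta\cdot C_k\to 0$), and in the projective case Kleiman's criterion delivers this since $\eta\in\partial\mathrm{Nef}(X)$. But in the general K\"ahler setting the dual of the nef cone is the cone of pseudoeffective $(n-1,n-1)$-currents, not the closure of the curve cone, so a class $\gamma$ with $\eta\cdot\gamma=0$ need not be approximable by curves. Your fallback---using a destabilizer $V_0$ coming from the hypothesis $\Delta^{pp}_\beta(\alpha_{t_0})\leq 0$---does not close this gap: from $g_{V_0}(t_0)\leq 0$ one only extracts
\[
\frac{\mu_{\alpha_{t_0},\eta}-\mu_{\alpha_{t_0},\eta}(V_0)}{n-p_0}\geq \frac{1}{1-t_0},
\]
which is a statement at the single value $t_0$ and does not force $\mu_{\alpha_t,\eta}(V_0)=0$, nor even the inequality $\mu_{\alpha_t,\eta}(V_0)\leq\tfrac{p_0-1}{n-1}\mu_{\alpha_t,\eta}$ that you would need for general $t$. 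You anticipated this obstacle yourself (``verifying that an optimal destabiliser must lie in the null locus of $\eta$''), and indeed it is the missing piece.

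The paper's route is genuinely different and avoids this difficulty. It fixes $\alpha_t$ and instead varies $\beta$ along a segment $\beta_s$ starting at $\beta_0=\alpha_t$. The key input is a result of \cite{SD4} asserting that $s\mapsto \Delta^{pp}_{\beta_s}(\alpha_t)$ is \emph{affine}; since the right-hand side $\tfrac{1}{n-1}(\mu_{\alpha_t,\beta_s}-t^{-1})$ is trivially affine in $\beta_s$, one is comparing two affine functions that agree at $s=0$. The existence criterion (Collins--Sz\'ekelyhidi/Donaldson) gives the implication $L(s)>0\Rightarrow R(s)>0$, and the hypothesis $\Delta^{pp}_\beta(\alpha_{t_0})\leq 0$ guarantees $L$ has a zero; affinity then forces $L\leq R$ everywhere. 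So the paper trades your explicit test curve for an abstract linearity theorem plus the Song--Weinkove type existence result---less elementary, but robust in the K\"ahler category.
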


\begin{proof}[Proof of Lemma] 
This is essentially proven in \cite{SD4}, but for completeness we here include an argument. We first prove the second inequality in \eqref{eq double ineq}. Start by observing that if $(X,\alpha_t,\beta)$ is stable (i.e. $\Delta^\mathrm{pp}_{\beta}(\alpha_t) > 0$) then \cite{CollinsGabor} and \cite{DonaldsonJobservation} implies that $\mu_{\beta,\alpha_t}\alpha_t - \beta > 0$, which by a straightforward modification of the proof of  \cite[Lemma 16]{SD4} is equivalent to
$
\mu_{\beta,\alpha_t} - t^{-1} > 0.
$
To prove the lemma we then exploit linearity of $\Delta^\mathrm{pp}_{\beta}(\alpha)$ in the $\beta$-argument (see \cite{SD4}). To this end, fix any $t \in (0,1)$ and consider the corresponding class $\alpha_t$. Let $\eta^*$ be the unique nef but not K\"ahler class of the form $(1-t)\eta + t\alpha$ for $t > 1$. Equivalently, \(\eta^*\) is the second boundary point of the nef cone on the
affine line through \(\eta\) and \(\alpha\); its uniqueness follows from convexity of
the nef cone and the fact that \(\alpha\) is Kähler. Consider a second path
$$
\beta_s := (1-s)\alpha_{t_s} + s\eta^*
$$
where $t_s \in (0,1]$ is defined so that
$
\alpha_{t_s} := (1-t_s)\eta + t_s\beta = \alpha_t
$
(so for some $s_0 \in (0,1)$ we have $\beta_{s_0}=\beta$, $t_{s_0} = t$, and $\alpha_{t_{s}} = \alpha_t$ for all $s \in [0,1)$). Now consider, for short, the functions
$$
L(s) := \Delta^\mathrm{pp}_{\beta_s}(\alpha_{t_s}), \; \; R(s) := \frac{1}{n-1}\left(\mu_{\beta_s,\alpha_{t_s}} - {t_s}^{-1}\right).
$$
As recalled above we conclude that whenever $\Delta^\mathrm{pp}_{\beta_s}(\alpha_{t_s}) \geq 0$ then also $
\mu_{\beta_s,\alpha_{t_s}} - {t_s}^{-1} \geq 0
$, i.e. $L(s) \geq 0$ implies $R(s) \geq 0$. Moreover, at $s = 0$ (corresponding to $t_0 = 1$) we have $\Delta^\mathrm{pp}_{\beta_0}(\alpha_{t_0}) = 1$ and 
$$
\frac{1}{n-1}\left(\mu_{\beta_0,\alpha_{t_0}} - t_0^{-1} \right) = \frac{1}{n-1} \cdot (n-1) = 1,
$$
i.e. $L(0) = R(0) = 1$. 
Finally we claim that both sides of the sought inequality are linear in the $\beta$-argument, as we move along $\beta_s := (1-s)\alpha_{t_s} + s\eta^*$ for $s \geq 0$ (this is clear for $R(s)$, and for $L(s)$ this is the main theorem of \cite{SD4}). To summarise, we have $L(0) = R(0) = 1$, and $L(s) \geq 0$ implies $R(s) \geq 0$. Moreover, by assumption, there exists an $s$ such that $L(s) \leq 0$. Putting it all together we find that $L(s) \leq R(s)$ for all $s \geq 0$. 

This gives the second inequality in \eqref{eq double ineq}.  The first inequality is proven similarly.
\end{proof}

\noindent As an application of the lemma one computes
$$
\gamma(p,\alpha_R,\beta) \geq 
\mu_{\alpha_R,\beta} - \left(\frac{n-p}{n-1}\mu_{\alpha_R,\beta} - \frac{n-p}{n-1} \frac{1}{R} \right)\alpha_R - p\beta = 
$$
$$
= \left[\frac{p-1}{n-1}\mu_{\alpha_R,\beta} + \frac{n-p}{n-1}\frac{1}{R} \right]\left[(1-R)\eta + R\beta \right] - p\beta
$$
where as usual $\theta \geq \theta'$ means that the cohomology class $\theta - \theta'$ is nef. Using the shorthand notations
$$
f(p,R) := \frac{p-1}{n-1}\mu_{\alpha_R,\beta} + \frac{n-p}{n-1}\frac{1}{R}, \; \; C_R := 
n\frac{\eta \cdot \alpha_R^{n-1}}{\alpha_R^{n}}
$$
we thus have
\begin{equation}\label{Eq17}
\gamma(p,\alpha_R,\beta) \geq (1-R)f(p,R)\eta - (p-Rf(p,R))\beta. 
\end{equation}
Now note that
$$
p - f(p,R)R = 
\frac{1}{n-1}(p-1)(n-\mu_{\alpha_R,\beta}R) 
$$
$$
= \frac{p-1}{n-1}n\left(\frac{((1-R)\eta + R\beta) \cdot \alpha_R^{n-1}}{\alpha_R^n} - \frac{R\beta \cdot \alpha_R^{n-1}}{\alpha_R^n}\right) = \frac{p-1}{n-1}(1-R) n\frac{\eta \cdot \alpha_R^{n-1}}{\alpha_R^n}
$$
$$
 = \frac{p-1}{n-1}(1-R)C_R
$$
and moreover $f(p,R) > 0$ always holds, because $\alpha_R$ and $\beta$ are K\"ahler classes. Therefore, if $R \in (0,1)$, the right hand side of \eqref{Eq17} lies in any given convex open cone $\mathcal{C} \subset H^{1,1}(X,\mathbb{R})$ if and only if
$$
\eta - \left[\frac{p-1}{n-1} \frac{n - \mu_{\alpha_R,\beta}R}{(1-R)f(p,R)} \right]\beta = \eta - \left[\frac{p-1}{n-1} \frac{C_R}{f(p,R)} \right]\beta \in \mathcal{C}.
$$
Applying this to any of the convex open cones $\mathcal{M}_{k}\mathcal{K}_X$ yields the desired conclusion. Note in particular that Lemma \ref{Lemma double ineq} implies that the stability threshold $\Delta^\mathrm{pp}_{\beta}(\alpha_R)$ tends to minus infinity as $R$ approaches zero, so for small enough $R > 0$ this quantity is always negative.
\end{proof}

\noindent As a first consequence, we now deduce the following statement, which has the advantage that the hypothesis is possible to verify in many examples.

\begin{corollary}
Suppose $(X,\alpha,\beta)$ satisfies that $\eta := \mathrm{proj}_{\alpha}(\beta)$ is $3$-modified K\"ahler.
Then the conclusion of Theorem \ref{Thm main higher dim optimal} holds. Namely, there is a finite number of subvarieties achieving the following infimum
$$
\inf_{V \subset X} \left(\frac{1}{\dim X - \dim V}\right)(\mu_{\alpha,\beta} - \mu_{\alpha,\beta}(V)) = \Delta^{\mathrm{pp}}_{\beta}(\alpha),
$$
and there is a uniform $\lambda := \lambda(\alpha,\beta) > 0$ such that only finitely many subvarieties $V' \subset X$, satisfy
$$
\frac{1}{\dim X - \dim V'}(\mu_{\alpha,\beta} - \mu_{\alpha,\beta}(V')) \leq \Delta_{\beta}^\mathrm{pp}(\alpha) + \lambda.
$$
\end{corollary}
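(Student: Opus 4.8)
The plan is to obtain the corollary by verifying, via Proposition \ref{Prop suff cond}, the two hypotheses of Theorem \ref{Thm main higher dim optimal}, and then quoting that theorem verbatim. I would apply Proposition \ref{Prop suff cond} with $k = 3$: since $\eta := \mathrm{proj}_{\alpha}(\beta)$ is assumed $3$-modified K\"ahler, the proposition provides that $(X,\alpha,\beta)$ is unstable (so in particular $\Delta^{\mathrm{pp}}_{\beta}(\alpha) \leq 0$ and the triple is not J-stable) and that each class
$$
\gamma(p,\alpha,\beta) = \bigl(\mu_{\alpha,\beta} - (n-p)\Delta^{\mathrm{pp}}_{\beta}(\alpha)\bigr)\alpha - p\beta
$$
is $3$-modified K\"ahler for every $p = 1,2,\dots,n-1$.

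The next step is to upgrade this to the positivity hypothesis \eqref{Eq 1111}. Here I would invoke the chain of inclusions $\mathcal{M}_3\mathcal{K} \subseteq \mathcal{M}_4\mathcal{K} \subseteq \dots \subseteq \mathcal{M}_n\mathcal{K}$ recalled in Section \ref{Section prelim}. For every $p \geq 2$ we have $3 \leq p+1$, hence $\mathcal{M}_3\mathcal{K} \subseteq \mathcal{M}_{p+1}\mathcal{K}$, and therefore $\gamma(p,\alpha,\beta) \in \mathcal{M}_{p+1}\mathcal{K}$ for all $p = 2,3,\dots,n-1$. This is exactly condition \eqref{Eq 1111}; the case $p = 1$, which Theorem \ref{Thm main higher dim optimal} explicitly does not require, is the only one for which the cone inclusions would point the wrong way, so its omission is harmless.

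Having checked both hypotheses, I would apply Theorem \ref{Thm main higher dim optimal}. Its conclusion that $\mathrm{Dest}_{\alpha,\beta}^{\mathrm{opt}}$ is non-empty and finite unwinds, by the definition of $\mathrm{Dest}_{\alpha,\beta}^{\mathrm{opt}}$, to the statement that the infimum defining $\Delta^{\mathrm{pp}}_{\beta}(\alpha)$ is attained by a finite and non-empty collection of subvarieties, which is the first assertion of the corollary. The uniform gap constant $\lambda > 0$ produced by the theorem gives the second assertion: the theorem guarantees that all but finitely many $V$ satisfy $\tfrac{1}{\dim X - \dim V}(\mu_{\alpha,\beta} - \mu_{\alpha,\beta}(V)) \geq \Delta^{\mathrm{pp}}_{\beta}(\alpha) + \lambda$, so after replacing $\lambda$ by $\lambda/2$ the finitely many $V'$ obeying the corollary's non-strict inequality form a subset of this exceptional finite set.

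The one point genuinely requiring care, and the main (if minor) obstacle, is the matching of the K\"ahler class $\alpha$. Proposition \ref{Prop suff cond} delivers its conclusions for classes of the form $(1-r)\eta + r\beta$ with $r > 0$ small enough, so one must ensure that the given $\alpha$ — characterised by $\eta = \mathrm{proj}_{\alpha}(\beta)$, hence of the form $(1-R)\eta + R\beta$ for some $R \in (0,1]$ — lies in this small-$r$ regime, which is also precisely where unstability holds (recall from the proof of Proposition \ref{Prop suff cond} that $\Delta^{\mathrm{pp}}_{\beta}(\alpha_R) \to -\infty$ as $R \to 0$). This membership, and with it the whole corollary, follows from the definition of the projection together with the final sentence of Proposition \ref{Prop suff cond}; no further estimate is needed.
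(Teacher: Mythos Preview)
Your approach matches the paper's one-line proof (``Combine Proposition~\ref{Prop suff cond} and Theorem~\ref{Thm main higher dim optimal}''), and your elaboration via the cone inclusions $\mathcal{M}_3\mathcal{K} \subseteq \mathcal{M}_{p+1}\mathcal{K}$ for $p \geq 2$ is the correct way to bridge the ``$3$-modified K\"ahler'' output of the proposition with the ``$(p+1)$-modified K\"ahler'' input of the theorem.

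The one point that is not quite resolved is your final paragraph. Proposition~\ref{Prop suff cond} produces its conclusions for classes $\alpha_r = (1-r)\eta + r\beta$ with $r > 0$ \emph{small enough}, and its final sentence reiterates exactly this restriction. Nothing in the definition of the projection forces the given $\alpha$ (which sits at some fixed $R \in (0,1]$) to lie in that small-$r$ regime; in particular, instability is not guaranteed for arbitrary $R$. Your sentence ``no further estimate is needed'' therefore overstates what has been established. That said, the paper's own proof is equally silent on this point, so the gap you flag is really a gap in the statement of the corollary itself (it should be read, as the surrounding context suggests, for $\alpha$ sufficiently close to $\eta$ along the segment), not a defect unique to your argument.
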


\begin{proof}
Combine Proposition \ref{Prop suff cond} and Theorem \ref{Thm main higher dim optimal}. 
\end{proof}

\noindent It is also interesting to note the following. Recalling that we have fixed $\alpha$ and chosen $R \in (0,1)$ so that $\alpha_R = \alpha$ we immediately conclude from the above that
$$
\gamma(1,\alpha,\beta) \geq \frac{(1-R)}{R}\eta,
$$
i.e. this class is always nef. In fact, it is clear that there are optimally destabilising curves precisely if there is equality in Lemma \ref{Lemma double ineq}, and this is equivalent to $\gamma(1)$ being nef but not K\"ahler. 
From this one may also extrapolate bounds for the other classes $\gamma(p,\alpha,\beta)$ for $p \geq 2$, yielding
$$
\gamma(p,\alpha,\beta) = \gamma(1,\alpha,\beta) - (p-1)(\beta - \Delta^\mathrm{pp}_{\beta}(\alpha)\alpha) \geq \frac{(1-R)}{R}\eta - (p-1)(\beta - \Delta^\mathrm{pp}_{\beta}(\alpha)\alpha),
$$
which is true for any given $\alpha,\beta \in \mathcal{K}_X$. From this can equally well conclude that if $\eta \in \mathcal{M}_k\mathcal{K}_X$ and $R > 0$ is small enough, then so is $\gamma(p,\alpha_R,\beta)$ for all $p = 1,2,\dots,n-1$.

The main motivation of the above sufficient criteria, is to ensure that the finiteness of (optimal) destabilising subvarieties holds for a large class of $n$-folds, possibly all compact K\"ahler manifolds. In this direction, the above Corollary can  be summarised in the following slightly more general way: 

\begin{theorem}
Let $X$ be a compact K\"ahler $n$-fold on which there is a strict inclusion of cones $\mathcal K_X \subsetneq  \mathcal{M}_3 \mathcal K_X$. Then, there exist $\alpha, \beta \in \mathcal K_X$ K\"ahler classes for which $(X,\alpha,\beta)$ is not stable, and Theorems \ref{main thm 1 intro}, \ref{main thm gMA intro} apply. In particular, there is a finite number of (optimal) destabilising subvarieties, i.e. $\mathrm{Dest}_{\alpha,\beta}$ and $\mathrm{Dest}_{\alpha,\beta}^{opt}$ are both finite sets. 
\end{theorem}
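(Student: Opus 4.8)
The plan is to obtain this statement as a direct application of Proposition \ref{Prop suff cond}, so that the real work consists only in converting the abstract cone inclusion $\mathcal K_X \subsetneq \mathcal M_3 \mathcal K_X$ into the concrete input required there: a pair of K\"ahler classes whose projection is $3$-modified K\"ahler.

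First I would manufacture an auxiliary class $\eta$ that is simultaneously nef, not K\"ahler, and $3$-modified K\"ahler. Since $\mathcal K_X \subseteq \mathcal M_3 \mathcal K_X$ are nested open convex cones with \emph{strict} inclusion, I may fix $\eta_0 \in \mathcal M_3 \mathcal K_X \setminus \mathcal K_X$ and any $\beta \in \mathcal K_X$. The closed segment $[\beta,\eta_0]$ lies in the open convex cone $\mathcal M_3\mathcal K_X$ (both endpoints do), while it leaves the open cone $\mathcal K_X$. Setting
\[
\lambda^\ast := \sup\{\lambda\in[0,1] : (1-\lambda)\beta + \lambda\eta_0 \in \mathcal K_X\}, \qquad \eta := (1-\lambda^\ast)\beta + \lambda^\ast\eta_0,
\]
openness of $\mathcal K_X$ forces $\eta \in \partial \mathcal K_X$, that is $\eta$ is nef but not K\"ahler, while $\eta$ remains in $\mathcal M_3\mathcal K_X$ since it lies on the segment.

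Next I would realize $\eta$ as a projection and invoke the proposition. For $R \in (0,1)$ the class $\alpha_R := (1-R)\eta + R\beta$ is K\"ahler (a combination of the nef class $\eta$ and the K\"ahler class $\beta$ with strictly positive weight on the latter), and a short computation gives $\eta = (1-t)\beta + t\alpha_R$ with $t = (1-R)^{-1} > 1$; as $\eta$ is nef, not K\"ahler, and is the unique boundary point of $\mathcal K_X$ on this ray, this exhibits $\eta = \mathrm{proj}_{\alpha_R}(\beta)$ as a $3$-modified K\"ahler class. For $n \geq 4$ the hypotheses of Proposition \ref{Prop suff cond} then hold with $k = 3$, and the proposition produces a K\"ahler class $\alpha$ (of the form $\alpha_{R'}$ with $R'>0$ small, for which $\Delta^{\mathrm{pp}}_{\beta}(\alpha) < 0$ by Lemma \ref{Lemma double ineq}) such that $(X,\alpha,\beta)$ is not stable and all the classes $\gamma(p,\alpha,\beta)$ are $3$-modified, hence $(p+1)$-modified, K\"ahler for $p \geq 2$. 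Since Proposition \ref{Prop suff cond} asserts that Theorem \ref{main thm 1 intro} (and thus also its gMA analogue Theorem \ref{main thm gMA intro}) applies to this pair, its conclusions — via Theorem \ref{Thm main higher dim optimal} for the optimal destabilizers — deliver the finiteness of both $\mathrm{Dest}_{\alpha,\beta}$ and $\mathrm{Dest}^{\mathrm{opt}}_{\alpha,\beta}$.

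The step I expect to be most delicate is the bookkeeping at the boundary of the valid dimension range, together with the lowest-codimension positivity. The case $n = 3$ must be treated separately: there $\mathcal M_3\mathcal K_X = \mathcal B_X$, so $\eta$ is only big and Proposition \ref{Prop suff cond} (which requires $k \leq n-1 = 2$) does not apply verbatim; instead I would substitute $(\alpha_{R'},\beta)$ directly into the threefold Theorems \ref{main thm 3-folds} and \ref{thm analyticity}, whose sole positivity requirement is bigness of $\mu_{\alpha,\beta}\alpha - 2\beta$, and verify that this persists for $R'$ small. A secondary subtlety, already absorbed into Proposition \ref{Prop suff cond}, is that finiteness of the \emph{full} destabilizer set $\mathrm{Dest}_{\alpha,\beta}$ — unlike that of the optimal one, for which the $p=1$ condition is dispensable — is sensitive to the lowest-codimension part of $E_{nK}$; this is controlled there through the estimates on the classes $\gamma(p,\alpha,\beta)$ and the behaviour of $\Delta^{\mathrm{pp}}_{\beta}(\alpha_R)$ along the path, and in the worst case can be forced by selecting, via the intermediate value theorem, a semistable parameter on the ray $\alpha_R$, where $\gamma(p,\alpha,\beta) = \mu_{\alpha,\beta}\alpha - p\beta$.
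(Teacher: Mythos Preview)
Your proposal is correct and matches the paper's approach. The paper presents this theorem as a direct repackaging of Proposition \ref{Prop suff cond} and the Corollary immediately preceding the statement (whose proof is literally ``Combine Proposition \ref{Prop suff cond} and Theorem \ref{Thm main higher dim optimal}''), without spelling out how to extract a suitable $\eta$ from the abstract hypothesis $\mathcal K_X \subsetneq \mathcal M_3\mathcal K_X$; you supply exactly this missing step via the segment construction, and you correctly flag the edge cases ($n=3$, and the $p=1$ positivity needed for finiteness of the full $\mathrm{Dest}_{\alpha,\beta}$ as opposed to just $\mathrm{Dest}^{\mathrm{opt}}_{\alpha,\beta}$) that the paper leaves implicit.
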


\noindent This holds in particular when $X$ is an $n$-fold of the form $X = \mathbb P (\mathcal O_Y \oplus (L^\vee)^{a_1}\oplus\dots\oplus (L^\vee)^{a_n})$ for some $a_1 < a_2 < \dots < a_n$ (see Section \ref{Subsection ex Cutkosky}), providing general families of manifolds in arbitrary dimension where our main results hold. 

It is moreover interesting to observe that the result holds on blow-ups, starting from any initial threefold, in the following sense: 

\begin{corollary}
Let $X$ be any compact K\"ahler threefold, and let $\pi: Y \rightarrow X$ be the blow up of $X$ at a point $x \in X$. If $\alpha,\beta$ are K\"ahler classes on $X$, then consider the nef and big classes $\pi^*\alpha, \pi^*\beta$ on $Y$ and write $\alpha_{\epsilon} := \pi^*\alpha - \epsilon[E]$, and $\beta_{\epsilon'} := \pi^*\beta - \epsilon'[E]$. If moreover  $0 < \epsilon <<  \epsilon'$ are both small enough, then the set
$
\mathrm{Dest}_{\alpha_{\epsilon},\beta_{\epsilon'}}^{\mathrm{opt}}
$
of optimally destabilising subvarieties is finite, and its elements are precisely the subvarieties with strictly positive generic Lelong numbers 
$$
\nu((\mu - \Delta^\mathrm{pp}_{\beta_{\epsilon'}}(\alpha_{\epsilon}))\alpha_{\epsilon} - (n-1)\beta_{\epsilon'},V) > 0
$$
\end{corollary}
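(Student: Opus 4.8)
The plan is to recognize $(Y,\alpha_\epsilon,\beta_{\epsilon'})$ as an instance covered by Theorem \ref{Thm main higher dim optimal}, the decisive structural feature being that a point blow-up of a threefold satisfies the strict inclusion $\mathcal K_Y \subsetneq \mathcal B_Y = \mathcal M_3\mathcal K_Y$: the class $\pi^*\beta$ is nef and big (its top self-intersection is $\beta^3>0$) but vanishes on every line of the exceptional divisor $E\cong\mathbb P^2$, hence is big but not Kähler. Because $\dim Y = 3$, the only positivity hypothesis of Theorem \ref{Thm main higher dim optimal} that needs checking is the case $p=2$, namely that the single class $\gamma(2):=(\mu-\Delta)\alpha_\epsilon-2\beta_{\epsilon'}$ lies in $\mathcal M_3\mathcal K_Y=\mathcal B_Y$, where I abbreviate $\mu:=\mu_{\alpha_\epsilon,\beta_{\epsilon'}}$ and $\Delta:=\Delta^{\mathrm{pp}}_{\beta_{\epsilon'}}(\alpha_\epsilon)$.

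First I would establish instability together with the bigness of $\gamma(2)$. Restricting to $E$, where $\pi^*\alpha|_E=\pi^*\beta|_E=0$ and $-E|_E$ is the hyperplane class $h$, gives $\alpha_\epsilon|_E=\epsilon h$ and $\beta_{\epsilon'}|_E=\epsilon' h$, so that $\mu_{\alpha_\epsilon,\beta_{\epsilon'}}(E)=2\epsilon'/\epsilon$, whereas $\mu$ stays bounded (it tends to $\mu_{\alpha,\beta}(X)$ as $\epsilon,\epsilon'\to0$). Hence for $\epsilon\ll\epsilon'$ small we have $\mu(E)>\mu$, so the triple is unstable and $\Delta<0$. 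Since $E$ is a competitor in the infimum defining $\Delta$, we get unconditionally $\Delta\le\mu-2\epsilon'/\epsilon$; writing $\mu-\Delta=2\epsilon'/\epsilon+\delta$ with $\delta\ge0$ and using the cancellation $\tfrac{2\epsilon'}{\epsilon}\alpha_\epsilon-2\beta_{\epsilon'}=2\pi^*(\tfrac{\epsilon'}{\epsilon}\alpha-\beta)$ on $Y$, I obtain
$$
\gamma(2)=2\pi^*\Big(\tfrac{\epsilon'}{\epsilon}\alpha-\beta\Big)+\delta\,\alpha_\epsilon .
$$
For $\epsilon\ll\epsilon'$ the class $\tfrac{\epsilon'}{\epsilon}\alpha-\beta$ is Kähler on $X$, so its pullback is big, and $\gamma(2)$ is a sum of a big and a nef class, hence itself big. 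Theorem \ref{Thm main higher dim optimal} then yields that $\mathrm{Dest}^{\mathrm{opt}}_{\alpha_\epsilon,\beta_{\epsilon'}}$ is non-empty and finite. (Equivalently one could feed $\eta:=\mathrm{proj}_{\alpha_\epsilon}(\beta_{\epsilon'})$, which is big since $\mathcal K_Y\ne\mathcal B_Y$, into Proposition \ref{Prop suff cond}.)

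It remains to match $\mathrm{Dest}^{\mathrm{opt}}$ with a generic-Lelong-number locus. Running the surface-by-surface argument of Theorem \ref{main thm 3-folds}, every optimal destabilizing surface lies in $E_{nK}(\gamma(2))$ and is therefore one of its finitely many irreducible surface components, while conversely each such component has the optimal slope; these components are rigid by Proposition \ref{prop:rigidity}. Moreover, since surface slopes on $X$ are uniformly bounded (by finiteness of the stability threshold of $(X,\alpha,\beta)$) while $\mu_{\alpha_\epsilon,\beta_{\epsilon'}}(E)=2\epsilon'/\epsilon$ dominates both these and the line slope $\epsilon'/\epsilon$ weighted by the codimension factor, $E$ is the unique subvariety realizing $\Delta$, so $\mathrm{Dest}^{\mathrm{opt}}=\{E\}$.

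Here lies the main obstacle. The identity above shows that $\delta=0$ exactly when $E$ realizes the infimum, and then $\gamma(2)=2\pi^*(\tfrac{\epsilon'}{\epsilon}\alpha-\beta)$ is a smooth semipositive pullback: its non-Kähler locus equals $E$, and yet every minimal multiplicity $\nu(\gamma(2),\cdot)$ vanishes, so the literal reading $\nu(\gamma(2),V)>0$ would wrongly return the empty set. The correct formulation is via the irreducible components of $E_{nK}(\gamma(2))$; to exhibit genuinely positive generic Lelong numbers one perturbs the threshold by the uniform $\lambda>0$ of the second part of Theorem \ref{Thm main higher dim optimal}, for which
$$
\gamma(2)-\lambda\alpha_\epsilon=\pi^*\Big(2\tfrac{\epsilon'}{\epsilon}\alpha-2\beta-\lambda\alpha\Big)+\lambda\epsilon\,E
$$
is big with $E$ as the negative part of its divisorial Boucksom--Zariski decomposition (see \cite{Boucksomthesis}), whence $\nu(\gamma(2)-\lambda\alpha_\epsilon,E)=\lambda\epsilon>0$. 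The slope gap separating $E$ from the next candidates (a line sits a distance $\tfrac32\epsilon'/\epsilon-\tfrac12\mu$ above $\Delta$, which is bounded away from $0$) guarantees that no further subvariety enters as $\lambda$ shrinks, so the strictly-positive-Lelong-number locus of the perturbed class is exactly $E$, matching $\mathrm{Dest}^{\mathrm{opt}}$. Finally, when the inequality in Lemma \ref{Lemma double ineq} is strict the class $\gamma(1)$ is not nef, so there are no optimal destabilizing curves and every optimal destabilizer is a divisor; this closes the characterization.
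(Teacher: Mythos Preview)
Your route to finiteness is correct but genuinely different from the paper's. The paper computes the projection class $\eta = \mathrm{proj}_{\alpha_\epsilon}(\beta_{\epsilon'})$, identifies it (up to scale) with $\pi^*(\lambda\alpha-\beta)$ for $\lambda=\epsilon'/\epsilon$, observes that this is nef and big on $Y$, and then invokes Proposition~\ref{Prop suff cond} (together with Lemma~\ref{Lemma double ineq}) to conclude that the $\gamma(p,\alpha_\epsilon,\beta_{\epsilon'})$ are big for $R=1/\lambda$ small; Theorem~\ref{Thm main higher dim optimal} then gives the finiteness. You instead bypass the projection machinery entirely: you use $E$ as an explicit competitor in the infimum defining $\Delta$ to get the clean lower bound $\mu-\Delta\ge 2\epsilon'/\epsilon$, and then the exact cancellation $\tfrac{\epsilon'}{\epsilon}\alpha_\epsilon-\beta_{\epsilon'}=\pi^*(\tfrac{\epsilon'}{\epsilon}\alpha-\beta)$ yields $\gamma(2)=2\pi^*(\tfrac{\epsilon'}{\epsilon}\alpha-\beta)+\delta\,\alpha_\epsilon$ as big-plus-nef. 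This is more direct and more informative for this example, since it also pinpoints $E$ as the optimal destabilizer and makes the structure of $\gamma(2)$ completely explicit; the paper's argument is less hands-on but fits the general framework it has set up.

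On the Lelong-number clause you go further than the paper does. The paper's proof of that part is the single line ``follows from Lemma~\ref{lem:main lemma}'', which only gives containment of destabilizers in $E_{nK}(\gamma(2))$, not a characterisation via $\nu(\gamma(2),\cdot)>0$. You correctly observe that in the borderline case $\delta=0$ (which is exactly what happens here, since $E$ realises the infimum) the class $\gamma(2)$ is a smooth semipositive pullback, so \emph{all} its minimal multiplicities vanish and the literal condition $\nu(\gamma(2),V)>0$ returns the empty set rather than $\{E\}$. Your proposed fix---pass to the $\lambda$-perturbed class from the second part of Theorem~\ref{Thm main higher dim optimal}, where $E$ genuinely appears as the negative part of the divisorial Zariski decomposition---is a sound way to rescue the characterisation, and is in the spirit of the remark following Theorem~\ref{main thm 1 intro}. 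In short, your argument for the finiteness is complete and cleaner than the paper's; your treatment of the Lelong-number identification is not so much a gap in your proof as an exposure of one in the statement as written.
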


\begin{proof}
Write $\epsilon' = \lambda \epsilon$ where $\lambda > 1$. Assume moreover that $\lambda > 1$ is large enough so that $\lambda \alpha - \beta$ is nef. Then $(1-t)\beta_{\lambda\epsilon} + t\alpha_{\epsilon}$ is on the boundary of the nef cone $\overline{\mathcal{K}_X}$, precisely if $(1-t)\lambda + t = 0$, that is $t = \frac{1}{\lambda-1}$. Hence 
$$
\eta := \mathrm{proj}_{\alpha_{\epsilon}}(\beta_{\lambda\epsilon}) = \pi^*(\lambda\alpha - \beta)\cdot \frac{1}{\lambda-1}
$$
and this class is nef and big, so that $\eta^n > 0$. Moreover
$$
(1-R)\frac{1}{\lambda-1}\pi^*(\lambda\alpha - \beta) + R(\pi^*\beta - \lambda\epsilon E) = \pi^*\alpha - \epsilon E
$$ 
if and only if $R = \frac{1}{\lambda}$. If $\epsilon > 0$ is small enough (and $\lambda > 0$ large enough, with the constraint that $\lambda\epsilon$ is bounded above by the relevant Seshadri constant, so that $\beta_{\lambda\epsilon}$ is K\"ahler) then by Proposition \ref{Prop suff cond} it follows that $\gamma(p,\alpha_{\epsilon}, \beta_{\lambda\epsilon})$ is a big cohomology class for all $p = 1,2,\dots,n-1$, for all $\lambda >> 1$ large enough (and $\epsilon > 0$ correspondingly small enough). 
The last assertion regarding Lelong numbers follows from Lemma \ref{lem:main lemma}. 
\end{proof}

\begin{remark}
We expect that the strategy of proof generalises beyond the J-equation, to partly cover the PDE considered in Section \ref{Section other PDE} below. Since this would require the introduction of heavy notational machinery, we leave these aspects for future work.
\end{remark}

\medskip

\section{Destabilising subvarieties for other PDE} \label{Section other PDE}

\noindent In this section we finally clarify how the above arguments in fact apply also to a broad class of geometric PDE, which include many special cases of the $Z$-critical equations introduced in \cite{DMS}, inverse $\sigma_k$ equations in the sense of \cite{DatarPingali} (including the J-equation), as well as the deformed Hermitian Yang-Mills equation under certain phase hypotheses.

\subsection{Generalised Monge-Amp\`ere equations} This class of PDE has been studied in the projective case by Datar-Pingali \cite{DatarPingali} under the name of \emph{generalised Monge-Amp\`ere (gMA) equations}. The same kinds of techniques may also be applied to more general $Z$-critical equations (introduced in \cite{DMS}) in the special case of rank 1. This special case has recently been studied in \cite{FangMa} by Fang-Ma, where their solvability is linked to a Nakai-Moishezon type criterion, generalising previous works of G. Chen \cite{GaoChen}, Song \cite{Song}, Datar-Pingali \cite{DatarPingali} and others. However, for the sake of clarity, we shall here illustrate the ideas in the case of gMA equations. 

In formulating the equation we let $(X,\theta)$ be a compact K\"ahler manifold and let $\alpha$ be a Kähler class on $X$. Moreover, let $c_1,\ldots, c_{n-1}$ be real constants and $f:X\to\mathbb R$ be a smooth function such that the cohomological condition 
$$
\int_X \frac{\alpha^n}{n!} = \sum_{k=1}^{n-1}\frac{c_k}{(n-k)!}\int_X [\theta]^k \cdot \alpha^{n-k}  + \int_X f \theta^n
$$
is satisfied. Then, the \emph{generalised Monge-Amp\`ere equation} seeks a smooth Kähler form $\omega \in \alpha$ such that
\begin{equation}\label{eqn:gMAcoh}
 \frac{\omega^n}{n!} = \sum_{k=1}^{n-1}\frac{c_k}{(n-k)!} \theta^k \wedge \omega^{n-k}  +  f \theta^n
\end{equation}

It will be convenient to employ the following natural shorthand notation:  $$\exp(\omega) := \sum_{k = 0}^n \omega^k/k! \quad\quad \mathbf\Theta:= \sum_{k=0}^{n-1} c_k \theta^k  + f\theta^n.$$
We shall also write $\eta^{[k,k]}$ for the degree $(k,k)$-part of a multi-degree differential form $\eta$. If we denote $\omega_{\varphi} := \omega_0 + \frac{i}{2\pi}\partial \bar{\partial}\varphi \in [\omega_0]= \alpha$, then in this notation, the generalised Monge-Amp\`ere equation seeks a K\"ahler potential $\varphi$ such that
\begin{equation} \label{eqn:gMA}
\exp(\omega_{\varphi})^{[n,n]}= (\mathbf{\Theta} \wedge \exp(\omega_{\varphi}))^{[n,n]}.
\end{equation}
Note especially that if $\mathbf\Theta = \mu_{\alpha,[\theta]}^{-1}\theta$ then \eqref{gMA eq} is just the J-equation. We refer to \cite{FangMa, DatarPingali} for details on this broad formalism, including how special cases of interest (e.g. J-equation, inverse Hessian equations, dHYM equation) can be interpreted within this framework.

The key result of \cite{DatarPingali} in the projective case and of \cite{FangMa} in general is that if we moreover assume certain positivity conditions on $\mathbf\Theta$, then the solvability of \eqref{gMA eq} can be characterised by a Nakai-Moishezon type criterion involving intersection numbers on $X$. More precisely, assume that $(X,\alpha, \mathbf\Theta)$ satisfy the positivity conditions in \cite[(1.2)]{DatarPingali}. Then, we have the following theorem. 

\begin{theorem}[Datar-Pingali, Fang-Ma]\label{thm: Nakai-Moishezon for gMA}
    Let $(X,\alpha,\mathbf\Theta)$ satisfy the positivity hypotheses above. Then, the following are equivalent.
    \begin{enumerate}
        \item There exists a smooth Kähler form $\omega \in \alpha$ that solves the gMA equation (\ref{eqn:gMA}).
        \item For every proper irreducible analytic subvariety $V\subseteq X$ we have
        \[
        \int_{V} \left(\exp(\alpha) \cdot (1 - [\mathbf{\Theta}])\right) > 0.
        \]
    \end{enumerate}
\end{theorem}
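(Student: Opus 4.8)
The plan is to prove the two implications separately. The direction $(1)\Rightarrow(2)$ is the necessity of the numerical criterion and is the softer of the two, whereas $(2)\Rightarrow(1)$ is the genuinely analytic existence statement.

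For necessity I would start from a Kähler solution $\omega\in\alpha$ and extract pointwise positivity. The positivity hypotheses of \cite[(1.2)]{DatarPingali} guarantee that the gMA operator is degenerate elliptic and that at a solution the eigenvalues of $\theta$ relative to $\omega$ lie in an open cone adapted to $\mathbf{\Theta}$. Diagonalising $\theta$ with respect to $\omega$ at a point, one checks that this cone condition is exactly what is needed so that, for every $p=1,\dots,n-1$, the $(p,p)$-part
\[
\big(\exp(\omega)\cdot(1-\mathbf{\Theta})\big)^{[p,p]}
\]
restricts to a strictly positive form on the regular locus of any $p$-dimensional irreducible subvariety $V$ (in the J-equation model case, where $\theta$ is Kähler, the eigenvalues are automatically positive and this is immediate for $p<n$). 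Integrating this positive form over $V$ yields a strictly positive number, and since the integral over the cycle $V$ depends only on cohomology classes it equals $\int_V\exp(\alpha)\cdot(1-[\mathbf{\Theta}])$. The mild care needed for singular $V$ is dispatched by pulling back to a resolution, positivity of forms being preserved under pullback.

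For sufficiency I would run a continuity method along a path $\mathbf{\Theta}_t$ joining a manifestly solvable equation to the target $\mathbf{\Theta}$, arranged so that $(2)$ holds with room at the starting end. Openness follows from the implicit function theorem: the positivity hypotheses make the linearised operator second order, elliptic, and, after quotienting by constants, invertible on suitable Hölder spaces. Closedness is a matter of a priori estimates. By the now-standard theory of fully nonlinear equations admitting a $\mathcal{C}$-subsolution (Székelyhidi, Guan, as adapted to this setting by Datar--Pingali \cite{DatarPingali} and Fang--Ma \cite{FangMa}), the second-order and higher estimates, via Evans--Krylov, reduce to a uniform $C^0$ bound on the potential, \emph{provided} one has a $\mathcal{C}$-subsolution along the path.

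The crux is thus twofold: (i) produce a $\mathcal{C}$-subsolution from the numerical criterion $(2)$, and (ii) obtain the $C^0$ estimate. For (i) the strategy, following Gao Chen's treatment of the J-equation \cite{GaoChen}, is an induction on dimension: the Nakai--Moishezon inequalities tested against \emph{all} subvarieties allow one to construct, through an envelope/gluing argument, a metric whose eigenvalue vector lies in the subsolution cone everywhere, the inequalities on lower-dimensional $V$ feeding the induction by controlling the loci where positivity degenerates. For (ii) the $C^0$ bound follows either from an ABP/Błocki-type argument anchored on the subsolution, or from a pluripotential argument in which any failure of the estimate concentrates mass on a subvariety violating one of the inequalities in $(2)$. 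I expect step (i) to be the main obstacle: it is exactly the passage from the global, cohomologically tested positivity to the pointwise differential-geometric subsolution condition, and it is what distinguishes the deep contributions of Gao Chen, Datar--Pingali and Fang--Ma from the formal continuity-method skeleton. In the gMA generality the extra difficulty is the interaction of the several terms $c_k\theta^k$ with the inhomogeneous term $f\theta^n$, which make the relevant cone and its subsolution theory more delicate than in the homogeneous J-equation case.
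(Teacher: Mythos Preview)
The paper does not prove this theorem at all: it is stated as an external input, attributed in the theorem header to Datar--Pingali \cite{DatarPingali} (projective case) and Fang--Ma \cite{FangMa} (general K\"ahler case), and simply quoted before Definition~\ref{Definition gMA semistable}. So there is no ``paper's own proof'' to compare your proposal against.

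That said, your sketch is a faithful high-level summary of how those cited papers actually proceed: necessity via pointwise positivity of the relevant $(p,p)$-forms at a solution, sufficiency via a continuity method with openness from ellipticity, closedness from a priori estimates conditional on a $\mathcal{C}$-subsolution, and the hard step being the construction of that subsolution from the numerical inequalities by induction on dimension in the style of Gao Chen. You have correctly identified step~(i) as the heart of the matter. If you were asked to supply a proof here, the appropriate response would be to cite \cite{DatarPingali} and \cite{FangMa} rather than to reproduce their arguments, exactly as the paper does.
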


In the light of the above theorem, and following \cite{DatarMeteSong} we introduce the following natural definition.

\begin{definition} \label{Definition gMA semistable}
 Suppose $(X,\alpha,\mathbf\Theta)$ satisfy the positivity condition above. Then we say that $(X,\alpha,\mathbf\Theta)$ is \emph{(gMA-)semistable (resp. stable)} if for every proper irreducible analytic subvariety $V \subseteq X$ we have 
 \[
 \int_{V} \left(\exp(\alpha) \cdot (1 - [\mathbf{\Theta}])\right) \geq 0 \quad (\textrm{resp. } > 0).
 \]
 If $(X,\alpha,\mathbf{\Theta})$ is not semistable, we say that it is unstable. If $V$ violates the above inequality, we say that it  \emph{(gMA)-destabilises} the triple $(X,\alpha,\mathbf\Theta)$.
\end{definition}

\noindent In order to apply our techniques, we finally need to restrict the class of generalised Monge-Amp\`ere equations considered. We shall focus on those gMA equations whose associated numerical criterion given by Theorem \ref{thm: Nakai-Moishezon for gMA} `factorises'. We explain this more precisely now. Let us define the polynomials  
$$
P(y) := \sum_{k=1}^{n-1} c_k y^k
$$
and for $p=1,\ldots, \dim X - 1$, 
\begin{equation}\label{eqn:defnofQp}
Q_p(x,y):= (\exp(x)(1-P(y)))^{[p]}
\end{equation}
where by $R(x,y)^{[p]}$ we mean the degree $p$ homogeneous part of the power series $R(x,y)$. We note that the polynomials $Q_p(x,y)$ only depend on the coefficients $c_1,\dots, c_{n-1}$ and not on $\theta$ or $\alpha$.
\begin{definition}[Factorisable gMA equation]\label{def:factorisablegMAeq}
We say that the triple $(X,\alpha,\mathbf\Theta)$ defines a \emph{factorisable} gMA equation if for each $p= 1, \ldots,\dim X - 1$, the polynomial $Q_p(x,y)$ can be factorised as 
$$
Q_p(x,y) = (x-r_py)\tilde Q_p(x,y)
$$
where 
\begin{enumerate}
    \item the constant $r_p \geq 0$,
    \item the polynomial $\tilde Q_p(x,y)$ is a polynomial with nonnegative real coefficients.
\end{enumerate}
The cohomology class $\tau_{p}(\alpha,[\theta]):= \alpha - r_p\beta$ is called the \emph{associated factor class at dimension $p$}.
\end{definition}

\begin{example}[Inverse Hessian equations]
For $k\geq 1$, let $\mathbf\Theta = \kappa \theta^k$  where $\kappa$ is the uniquely determined cohomological constant 
$$
\kappa = \frac{n!\int_X [\theta]^k\cdot \alpha^{n-k}}{(n-k)!} >0. 
$$
This choice of $\mathbf\Theta$ defines an \emph{inverse Hessian equation}. Then, for $p<k$ we have 
$$
Q_p(x,y)= \frac{1}{p!}x^p
$$
and for $p\geq k$ we have 
$$
Q_p(x,y) = \frac{1}{p!}x^p- \frac{\kappa}{(p-k)!}x^{p-k}y^k = \left(x-\kappa_p^{1/k}y\right)\left(\frac{1}{p!}\sum_{r=0}^{p-1}\kappa_p^{r/k}x^{p-1-r}y^r\right),
$$
where $\kappa_p := p!\kappa/(p-k)! >0.$ Thus we see that all inverse Hessian equations are factorisable, with the associated factor class at each dimension $p=k,\ldots,\dim X -1$ given by \[\tau_{p}(\alpha,[\theta]):= \alpha - \kappa_p^{1/k}[\theta].\] When $k = 1$ this corresponds to the J-equation.
\end{example}
\begin{example}[gMA equation on three-folds]\label{example gMA on three-folds}
Let $\dim X = 3$ and define 
$$
\mathbf\Theta = c \theta + d\theta^2 + f\theta^3
$$   
where $c,d$ are real constants and $f$ is a smooth function such that the triple $(X,\alpha,\mathbf\Theta)$ satisfies the above positivity hypotheses. Then, we have 
$$
Q_2(x,y) = \frac{1}{2}x^2-cxy -dy^2 = \frac{1}{2}(x+(-c+\sqrt{c^2+2d})y)((x+(-c-\sqrt{c^2+2d})y)
$$
and 
$$
Q_1(x,y) = x - cy.
$$
Thus, we see that the triple $(X,\alpha,\mathbf\Theta)$ defines a factorisable gMA equation whenever $d>0$, with associated factor class at dimension two equal to \[\tau_2(\alpha,[\theta]) = \alpha + (-c-\sqrt{c^2+2d})[\theta].\]
\end{example}

In fact, the notion of a factorisable gMA is not any more restrictive than equations of the form \eqref{eqn:gMA}. More precisely, we have the following result
\begin{lemma}\label{lem:gMAfactorisable}
    Suppose $(X,\alpha,\mathbf\Theta)$ satisfies the cohomological condition \eqref{eqn:gMAcoh} and the positivity hypothesis in \cite[(1.2)]{DatarPingali}. Then, $(X,\alpha,\mathbf\Theta)$ defines a factorisable gMA equation.
\end{lemma}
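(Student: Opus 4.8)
The plan is to reduce the factorization of the homogeneous polynomials $Q_p(x,y)$ to an elementary fact about univariate polynomials whose non-leading coefficients are all nonpositive. First I would compute $Q_p$ explicitly from \eqref{eqn:defnofQp}: extracting the degree-$p$ part of $\exp(x)(1-P(y))$ gives
\[
Q_p(x,y) = \frac{x^p}{p!} - \sum_{k=1}^{p}\frac{c_k}{(p-k)!}\,x^{p-k}y^k,
\]
which involves only $c_1,\dots,c_{n-1}$ (here $p\le n-1$) and not $f$, as anticipated by the remark following \eqref{eqn:defnofQp}. Since $Q_p$ is homogeneous of degree $p$, one has $Q_p(x,y)=y^p q_p(x/y)$ with $q_p(t):=Q_p(t,1)$, so a factorization $q_p(t)=(t-r_p)\tilde q_p(t)$ rehomogenizes to the sought $Q_p(x,y)=(x-r_p y)\tilde Q_p(x,y)$, with $\tilde Q_p$ having nonnegative coefficients exactly when $\tilde q_p$ does.

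The decisive input from the positivity hypotheses \cite[(1.2)]{DatarPingali} is that $c_k\ge 0$ for every $k$; this is the algebraic shadow of those conditions, consistent with the standing assumptions of Main Theorem \ref{main thm gMA intro}. Granting it, I would write $p!\,q_p(t)=t^p-\sum_{k=1}^{p}b_k t^{p-k}$ with $b_k:=\tfrac{p!}{(p-k)!}c_k\ge 0$. If all $b_k$ vanish, then $q_p(t)=t^p/p!$ and I take $r_p=0$, $\tilde q_p(t)=t^{p-1}/p!$. Otherwise the coefficient sequence $1,-b_1,\dots,-b_p$ has exactly one sign change, so Descartes' rule of signs yields a unique positive root $r_p>0$, which I take as the factor. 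In both cases $r_p\ge 0$, giving condition (1) of Definition \ref{def:factorisablegMAeq} and the factor class $\tau_p(\alpha,[\theta])=\alpha-r_p\beta$.

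It then remains to check condition (2), that $\tilde q_p$ has nonnegative coefficients. I would argue by synthetic division: the quotient of $t^p-\sum_k b_k t^{p-k}$ by $(t-r_p)$ has coefficients $s_i=r_p^{\,i}-\sum_{j=1}^{i}b_j r_p^{\,i-j}$ for $i=0,\dots,p-1$. When $r_p>0$, dividing by $r_p^{\,i}$ gives $s_i/r_p^{\,i}=1-\sum_{j=1}^{i}b_j r_p^{-j}$, a quantity nonincreasing in $i$ since each $b_j r_p^{-j}\ge0$; as $r_p$ is a root, this expression vanishes at $i=p$, so $s_i\ge 0$ for all $i\le p-1$. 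Rescaling by $1/p!$ shows $\tilde q_p$, hence $\tilde Q_p$, has nonnegative coefficients, completing the factorization (the case $r_p=0$ being immediate).

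The hard part is not the polynomial algebra above but the first sentence of the second paragraph: extracting the sign information $c_k\ge 0$ from the Datar--Pingali positivity conditions \cite[(1.2)]{DatarPingali}. I would also take care of the degenerate configurations---some $c_k=0$, or all lower coefficients vanishing so that $r_p=0$---to ensure $r_p\ge0$ and the cofactor positivity hold uniformly in $p$, and note that Descartes' rule pins down $r_p$ unambiguously, so that the factor class $\tau_p$ is well defined.
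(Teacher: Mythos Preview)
Your argument is correct and follows the same overall architecture as the paper's proof: dehomogenize $Q_p$ to a univariate polynomial $q_p(t)=Q_p(t,1)$, locate a nonnegative root $r_p$, and verify the cofactor has nonnegative coefficients. The execution differs in two places. For existence of $r_p$, the paper uses an intermediate-value argument ($h_p(\varepsilon)\le 0$ for small $\varepsilon>0$ and $h_p\to+\infty$) and then takes the \emph{largest} nonnegative root, whereas you invoke Descartes' rule to get a \emph{unique} positive root; since one sign change forces exactly one positive root, these choices coincide. For the cofactor, the paper splits $g_p=P_p-N_p$ into nonnegative parts and derives a contradiction by inspecting the lowest-degree term of $N_p$, while you compute the synthetic-division coefficients $s_i=r_p^{\,i}-\sum_{j\le i}b_jr_p^{\,i-j}$ directly and use that $s_i/r_p^{\,i}$ is nonincreasing and vanishes at $i=p$. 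Your route is slightly more explicit and also pins down uniqueness of $r_p$ (hence well-definedness of $\tau_p$) for free; the paper's contradiction argument is marginally more robust in that it never divides by $r_p$, but you handle the $r_p=0$ case separately anyway. Your caveat about extracting $c_k\ge 0$ from \cite[(1.2)]{DatarPingali} is apt; the paper uses this implicitly as well and it is part of the standing hypotheses.
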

\begin{proof}
    We must prove that if $Q_p(x,y)$ is defined by \eqref{eqn:defnofQp}, then it can be factorised as 
    \[
    Q_p(x,y) = (x-r_py)\tilde Q_p(x,y)
    \]
    where $r_p\geq 0$ and all the coefficients of $\tilde Q_p(x,y)$ are non-negative. It suffices to prove that if $h_p(x) = Q_p(x,1)$, then each $h_p(x)$ can be factorised as 
    \[
    h_p(x)=(x-r_p)g_p(x)
    \]
    with $r_p\geq 0$ and $g_p(x)$ is a polynomial with non-negative coefficients, for then the claim will follow by homogenising $h_p(x)$ and $g_p(x)$. 

    Now note that 
    \[
    h_p(x) = \frac{x^p}{p!} - \sum_{k=1}^{p} \frac{c_k}{(p-k)!} x^{p-k}.
    \]
    It is clear that $h_p(x) \to +\infty$ as $x\to +\infty$, and $h_p(\varepsilon)\leq 0$ for $\varepsilon > 0$ small enough, with strict inequality if some $c_k > 0$ for $k = 1,\dots p$. Therefore, $h_p(x)$ admits a non-negative real root. Let $r_p$ be the largest non-negative real root of $h_p(x)$. Then, $r_p = 0$ if and only if $c_k = 0$ for all $k=1,\dots,p$. Now, we may write
    \[
    h_p(x) = (x-r_p)g_p(x)
    \]
    where $g_p(x)$ is a polynomial with real coefficients. We must prove that all the coefficients of $g_p(x)$ are non-negative. Clearly, if $r_p = 0$, then all the $c_k = 0$ for $k=1,\dots,p$, so in that case $g_p(x) = x^{p-1}/p!$  certainly has non-negative coefficients. So we may assume that $r_p > 0$. Write $g_p(x) = P_p(x) - N_p(x)$, where both $P_p(x)$ and $N_p(x)$ have non-negative coefficients. If $N_p(x)\neq 0$, then let $k_0$ be the smallest integer such that the coefficient of $x^{k_0}$ in $N_p(x)$ is strictly positive. Since $g_p(x)$ has degree $p-1$, $k_0 < p$. Then, we have the identity
    \[
    \frac{x^p}{p!} - \sum_{k=0}^{p-1} \frac{c_{p-k}}{k!}x^k = xP_p(x)+r_pN_p(x) - (xN_p(x) + r_p P_p(x)).
    \]
    The monomial $x^{k_0}$ occurs with non-zero coefficient on the right-hand-side of this identity only in the terms  $r_p N_p(x)$ (where it has a strictly positive coefficient) and (possibly) in $xP_p(x)$ (where it has a non-negative coefficient). On the other hand, all the coefficients of $x^k$ with $k < p$ have non-positive coefficients on the left-hand-side of this identity. This is a contradiction. Hence, $N_p(x) = 0$ and $g_p(x) = P_p(x)$ has non-negative coefficients.  
\end{proof}

\noindent One key feature of the factor classes $\tau_p(\alpha,[\theta])$ is that they are descending in $p$. More precisely, we have the following lemma.

\begin{lemma} \label{lem:factorclassesdescend} Let the triple $(X,\alpha,\mathbf\Theta)$ define a factorisable gMA equation. Then, for each $p=1,\dots,n-2$, we have $\tau_p(\alpha,[\theta])\geq \tau_{p+1}(\alpha,[\theta])$ with strict inequality whenever $\tau_{p+1}(\alpha,[\theta]) \neq \alpha$.
\end{lemma}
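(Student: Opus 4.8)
The plan is to reduce the stated cohomological inequality to a purely numerical monotonicity statement about the roots $r_p$, and then to analyze those roots through the one-variable polynomials $h_p(x):=Q_p(x,1)$ already used in the proof of Lemma \ref{lem:gMAfactorisable}. Writing $\beta:=[\theta]$, which is a Kähler class since $\theta$ is a Kähler form, I first observe that
$$
\tau_p(\alpha,[\theta]) - \tau_{p+1}(\alpha,[\theta]) = (r_{p+1}-r_p)\beta.
$$
Because $\beta$ is Kähler, $\tau_p \geq \tau_{p+1}$ (i.e. the difference is nef) is equivalent to $r_{p+1}\geq r_p$, and the strict inequality (the difference being Kähler) is equivalent to $r_{p+1}>r_p$; moreover $\tau_{p+1}=\alpha$ precisely when $r_{p+1}=0$. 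So it suffices to show that the largest non-negative roots $r_p$ are non-decreasing, and strictly increasing whenever $r_{p+1}>0$.

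The key computation is an algebraic identity relating $h_{p+1}$ to $h_p$. Recall $h_p(x)=\frac{x^p}{p!}-\sum_{k=1}^p \frac{c_k}{(p-k)!}x^{p-k}$, with $h_p(x)\to+\infty$ as $x\to+\infty$ and $r_p$ its largest non-negative root. Comparing coefficients (equivalently, using $h_{p+1}'=h_p$ together with $h_{p+1}(0)=-c_{p+1}$) yields
$$
h_{p+1}(x)=\frac{x}{p+1}h_p(x)-c_{p+1}-\frac{1}{p+1}\sum_{k=1}^p \frac{k\,c_k}{(p+1-k)!}\,x^{p+1-k}.
$$
Evaluating at $x=r_p$, where $h_p(r_p)=0$, the first term vanishes and leaves
$$
h_{p+1}(r_p)=-c_{p+1}-\frac{1}{p+1}\sum_{k=1}^p \frac{k\,c_k}{(p+1-k)!}\,r_p^{\,p+1-k}.
$$

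I then invoke non-negativity of the coefficients. Factorizability already forces $c_k\geq 0$: setting $x=0$ in $Q_k(x,y)=(x-r_ky)\tilde Q_k(x,y)$ gives $-c_k y^k=-r_k y\,\tilde Q_k(0,y)$, exhibiting $c_k$ as a product of non-negative quantities. Hence every term on the right is $\leq 0$, so $h_{p+1}(r_p)\leq 0$; since $h_{p+1}\to+\infty$, the intermediate value theorem produces a root of $h_{p+1}$ that is $\geq r_p$, whence $r_{p+1}\geq r_p$. For strictness I split into cases: if $r_p>0$ then, since $r_p=0$ exactly when $c_1=\dots=c_p=0$ (see the proof of Lemma \ref{lem:gMAfactorisable}), some $c_{k_0}>0$ with $1\leq k_0\leq p$ makes the displayed expression strictly negative, forcing $r_{p+1}>r_p$; and if $r_p=0$ while $r_{p+1}>0$, the inequality $r_{p+1}>r_p$ is immediate. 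In either case $\tau_{p+1}\neq\alpha$ (that is, $r_{p+1}>0$) yields $r_{p+1}>r_p$, which is the claimed strict inequality.

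The main obstacle is the sign analysis of $h_{p+1}(r_p)$: the recursion $h_{p+1}'=h_p$ by itself does not determine the value at $r_p$, so the explicit constant and remainder terms in the identity above are essential. The remaining care lies in correctly matching the hypothesis $\tau_{p+1}\neq\alpha$ (equivalently $r_{p+1}>0$) — rather than the a priori different condition $r_p>0$ — to the strict conclusion, which is handled by the case distinction above.
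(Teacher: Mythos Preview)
Your proof is correct. You reduce to the inequality $r_{p+1}\geq r_p$ just as the paper does, but you reach it by a different computation: you establish the recursion
\[
h_{p+1}(x)=\frac{x}{p+1}h_p(x)-c_{p+1}-\frac{1}{p+1}\sum_{k=1}^p \frac{k\,c_k}{(p+1-k)!}\,x^{p+1-k},
\]
evaluate at $r_p$, and use $c_k\geq 0$ (which you correctly extract from factorizability) to get $h_{p+1}(r_p)\leq 0$ (strictly when $r_p>0$), then apply the intermediate value theorem. The paper instead goes the other direction: it uses the derivative identity $h_{p+1}'=h_p$ and evaluates $h_p$ at $r_{p+1}$, noting that the factorization $Q_{p+1}(x,1)=(x-r_{p+1})\tilde Q_{p+1}(x,1)$ with $\tilde Q_{p+1}(r_{p+1},1)>0$ forces $h_p(r_{p+1})=h_{p+1}'(r_{p+1})=\tilde Q_{p+1}(r_{p+1},1)>0$, whence $r_p<r_{p+1}$. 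Both arguments hinge on the same underlying facts (the relation $h_{p+1}'=h_p$ and the shape of the factorization), but the paper's route is a line shorter and avoids writing out the explicit remainder in the recursion; your route has the advantage of making the role of the coefficients $c_k$ completely transparent and of not needing to differentiate the factorization.
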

\begin{proof}
    Since $(X,\alpha,\mathbf\Theta)$ defines a factorisable gMA equation, the polynomials $Q_p$ given by \eqref{eqn:defnofQp} factorise as \[
    Q_p(x,y)= (x-r_py)\tilde Q_p(x,y)
    \] 
    where $r_p\geq 0$ and $Q_p(x,y)$ has nonnegative coefficients. Note that this implies that $x\mapsto Q_p(x,1)$ has at most one positive real root (counted with multiplicity), namely $r_p$, and $Q_p(x,1)\to+\infty$ as $x\to+\infty$. But it is easy to verify by straightforward computation that
    \[
    \frac{\partial}{\partial x}Q_{p+1}(x,y) = Q_{p}(x,y).
    \]
    If $r_{p+1} = 0$, i.e. $\tau_{p+1}(\alpha,[\theta]) = \alpha$, then the this implies that $\tau_p(\alpha,[\theta]) = \alpha$ also, and so $\tau_{p}(\alpha,[\theta]) \geq \tau_{p+1}(\alpha,[\theta])$. So, suppose $r_{p+1} >0$. Then, the factorisation of $Q_{p+1}(x,y)$ above implies that the polynomial function $x \mapsto Q_{p+1}(x,1)$ has exactly one zero (counted with multiplicity) on the positive real axis, namely $r_{p+1}$, and therefore its slope at $r_{p+1}$ must be positive. But since \[Q_p(x,1)=\frac{d}{dx}Q_{p+1}(x,1)\] we see immediately $Q_p(r_{p+1},1)>0$. Now, $x\mapsto Q_p(x,1)$ in turn also has at most one positive root and therefore cannot change sign more than once on the positive real axis. Thus, $Q_p(x,1) > 0$ whenever $x\geq r_{p+1}$, whence $r_p < r_{p+1}$. This shows that $\tau_p(\alpha,[\theta])>\tau_{p+1}(\alpha,[\theta])$.
\end{proof}
\begin{theorem}
    Suppose $(X,\alpha,\mathbf\Theta)$ satisfy the positivity hypothesis above and define a factorisable gMA equation whose associated factor class $\tau_{p}(\alpha,[\theta]) \in \mathcal M_{p+1} \mathcal K$ at dimension $p$ belongs to the $(p+1)$-modified Kähler cone for $p=1,\dots,n-1$. Then, there exist at most finitely many subvarieties that (gMA-)destabilise $(X,\alpha,\mathbf\Theta)$.
\end{theorem}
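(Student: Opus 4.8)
The plan is to treat destabilizers dimension by dimension and reduce, for each dimension $p$, the defining inequality to a positivity statement against the single factor class $\tau_p(\alpha,[\theta])$, whose non-Kähler locus is then controlled by the modified-Kähler hypothesis.

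First I would fix a proper irreducible subvariety $V$ with $\dim V = p$, so that $1\leq p\leq n-1$, and rewrite the destabilizing integral. Since $V$ is proper, only the $(p,p)$-part of $\exp(\alpha)\cdot(1-[\mathbf\Theta])$ contributes, and in particular the top-degree term $f\theta^n$ plays no role; hence
\[
\int_V \exp(\alpha)\cdot(1-[\mathbf\Theta]) = \int_V \bigl(\exp(\alpha)(1-P([\theta]))\bigr)^{[p,p]} = \int_V Q_p(\alpha,[\theta]),
\]
with $Q_p$ the polynomial of \eqref{eqn:defnofQp}. Invoking factorizability (Definition \ref{def:factorisablegMAeq}, which holds by Lemma \ref{lem:gMAfactorisable}) I then write $Q_p(\alpha,[\theta]) = \tau_p(\alpha,[\theta])\cdot \tilde Q_p(\alpha,[\theta])$, where $\tilde Q_p$ has nonnegative coefficients; since the leading term of $Q_p(x,y)$ is $x^p/p!$, the coefficient of $x^{p-1}$ in $\tilde Q_p(x,y)$ equals $1/p! > 0$.

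Next I would show that a $p$-dimensional destabilizer must lie inside $E_{nK}(\tau_p)$. The class $\tau_p(\alpha,[\theta])\in\mathcal M_{p+1}\mathcal K$ is in particular big, and $\tilde Q_p(\alpha,[\theta])$ is a nonnegative combination of monomials $\alpha^i[\theta]^j$ with $i+j=p-1$, each of which is a product of $p-1$ Kähler classes. Thus, if $V\not\subseteq E_{nK}(\tau_p)$, Lemma \ref{lem:main lemma} gives $\int_V \tau_p\cdot \alpha^i[\theta]^j>0$ for every such monomial; taking the nonnegatively weighted sum, and using that the $\alpha^{p-1}$ term is present with positive weight, yields $\int_V Q_p(\alpha,[\theta])>0$, so $V$ does not destabilize. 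Contrapositively, every $p$-dimensional destabilizer is contained in $E_{nK}(\tau_p)$.

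Finally I would invoke Lemma \ref{lem:EnK}: because $\tau_p$ is $(p+1)$-modified Kähler, every irreducible component of $E_{nK}(\tau_p)$ has dimension at most $p$, and there are only finitely many such components, so only finitely many $p$-dimensional subvarieties are contained in $E_{nK}(\tau_p)$. Letting $p$ range over $1,\dots,n-1$ gives finiteness of the full destabilizing set. I expect the only delicate point to be the first step: correctly matching the destabilizing integral with $\int_V Q_p(\alpha,[\theta])$ and turning the purely algebraic factorization of $Q_p(x,y)$ into the cohomological identity $Q_p(\alpha,[\theta]) = \tau_p\cdot\tilde Q_p(\alpha,[\theta])$ among products of the Kähler classes $\alpha$ and $[\theta]$; the positivity and finiteness steps then follow essentially formally from Lemmas \ref{lem:main lemma} and \ref{lem:EnK}, exactly as in the proof of Theorem \ref{main theorem higher dim}.
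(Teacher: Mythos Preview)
Your proposal is correct and follows essentially the same approach as the paper's proof: rewrite the destabilizing inequality for a $p$-dimensional $V$ via the factorization $Q_p = \tau_p\cdot\tilde Q_p$, use Lemma \ref{lem:main lemma} to force $V\subseteq E_{nK}(\tau_p)$, and then invoke the $(p+1)$-modified K\"ahler hypothesis (via Lemma \ref{lem:EnK}) to conclude that $V$ is one of finitely many irreducible components. Your write-up is in fact somewhat more careful than the paper's, in that you explicitly decompose $\tilde Q_p(\alpha,[\theta])$ into monomials of K\"ahler classes before applying Lemma \ref{lem:main lemma} and note that the $\alpha^{p-1}$ coefficient is strictly positive so the resulting sum is genuinely strict.
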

\begin{proof}
    Any destabiliser $V$ of dimension $p$ satisfies
    \[
    \int_V \tau_{p}(\alpha,[\theta])\cdot \tilde Q_p(\alpha,[\theta]) \leq 0.
    \]
    By Lemma \ref{lem:main lemma}, $V$ must therefore be contained in $E_{nK}(\tau_{p}(\alpha,[\theta]))$. This is a proper analytic subset not containing any $(p+1)$-dimensional subvarieties, so $V$ must be one of its finitely many irreducible components. 
\end{proof}

\noindent We should also point out that we get a much better result when $\dim X = 3$, just as in the special case of the $J$-equation.
\begin{theorem}\label{thm:gMAdim3}
    Suppose $X$ is a compact Kähler manifold of dimension $\dim X = 3$, $\alpha,\beta\in\mathcal K_X$ Kähler classes, with $\omega\in\alpha,\theta\in\beta$ Kähler forms. Let $\mathbf\Theta = c_1 \theta + c_2 \theta^2 + f\theta^3 $ be such that $(X,\alpha,\mathbf\Theta)$ satisfies the cohomological constraint \eqref{eqn:gMAcoh} and the positivity condition given in \cite[(1.2)]{DatarPingali}. Suppose that the factor class $\tau_2(\alpha,\beta)$ at dimension $2$ associated to $(X,\alpha,\mathbf\Theta)$ is big, and $(X,\alpha,\mathbf\Theta)$ is gMA-semistable. Then, there are finitely many curves and surfaces that (gMA)-destabilise the triple $(X,\alpha,\mathbf\Theta)$.
    \end{theorem}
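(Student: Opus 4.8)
The plan is to run the proof of Theorem~\ref{main thm 3-folds} verbatim, with the J-equation classes $\mu_{\alpha,\beta}\alpha - p\beta$ replaced by the factor classes $\tau_p = \tau_p(\alpha,\beta)$ supplied by the factorization of the gMA equation (Definition~\ref{def:factorisablegMAeq}). The first step is to unwind the destabilizing condition: for an irreducible $p$-dimensional subvariety $Z$ only the degree-$p$ part of $\exp(\alpha)\cdot(1-[\mathbf\Theta])$ contributes to $\int_Z \exp(\alpha)\cdot(1-[\mathbf\Theta])$ (the $f\theta^n$ term is invisible to proper subvarieties), so $Z$ destabilizes in the sense of $\int_Z \le 0$ precisely when $\int_Z Q_p(\alpha,\beta) = \int_Z \tau_p\cdot\tilde Q_p(\alpha,\beta)\le 0$. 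Here $\tilde Q_p(\alpha,\beta)$ is homogeneous of degree $p-1$ with nonnegative coefficients and strictly positive leading coefficient $1/p!$ in $\alpha$, hence a K\"ahler class. On a threefold the only proper subvarieties are curves and surfaces, since $Q_0\equiv 1$ so points never destabilize; thus it suffices to treat $p=1,2$, using $\tau_2 = \alpha - r_2\beta$ and $\tau_1 = \alpha - c_1\beta$ with $0\le r_1 = c_1 < r_2 = c_1+\sqrt{c_1^2+2c_2}$ as computed in Example~\ref{example gMA on three-folds}. (Under gMA-semistability these destabilizers are exactly the subvarieties realizing equality; I prove finiteness of $\{Z : \int_Z \le 0\}$, which is the natural statement.)

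For surfaces the conclusion is immediate: if $S$ destabilizes then $\int_S\tau_2\cdot\tilde Q_2\le 0$ with $\tilde Q_2$ a K\"ahler class and $\tau_2$ big by hypothesis, so Lemma~\ref{lem:main lemma} forces $S\subseteq E_{nK}(\tau_2)$. As $\tau_2$ is big, $E_{nK}(\tau_2)$ is a proper analytic subset with only finitely many irreducible surface components, bounding the destabilizing surfaces. For curves I first observe that $\tau_1 = \tau_2 + (r_2-r_1)\beta$ is big, being the sum of the big class $\tau_2$ and a nonnegative multiple of the K\"ahler class $\beta$ (Lemma~\ref{lem:factorclassesdescend}). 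Hence any destabilizing curve $C$, which satisfies $\tau_1\cdot C\le 0$, lies in $E_{nK}(\tau_1)$ by Lemma~\ref{lem:main lemma}. The finitely many one-dimensional components of $E_{nK}(\tau_1)$ account for all destabilizing curves except those contained in one of the finitely many surface components $S$ of $E_{nK}(\tau_1)$.

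The key step is to show that each such $S$ contains only finitely many curves $C$ with $\tau_1\cdot C\le 0$, and this is where gMA-semistability enters decisively. Passing to a resolution $f\colon\tilde X\to X$ with smooth proper transform $\tilde S$, exactly as in Theorem~\ref{main thm 3-folds}, I claim $f^*\tau_1|_{\tilde S}$ is big on $\tilde S$. By the projection formula $(f^*\tau_1)^2\cdot\tilde S = (\alpha - c_1\beta)^2\cdot S$; applying gMA-semistability to $S$ gives $\tfrac12\alpha^2\cdot S - c_1\,\alpha\cdot\beta\cdot S - c_2\,\beta^2\cdot S\ge 0$, and feeding the resulting lower bound for $\alpha^2\cdot S$ into the expansion of $(\alpha-c_1\beta)^2\cdot S$ yields $(\alpha-c_1\beta)^2\cdot S\ge (c_1^2+2c_2)\,\beta^2\cdot S = (r_2-r_1)^2\,\beta^2\cdot S > 0$. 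Likewise $f^*\tau_1\cdot f^*\alpha\cdot\tilde S = \alpha^2\cdot S - c_1\,\alpha\cdot\beta\cdot S > 0$ by the same inequality, and $f^*\alpha$ is represented by a semipositive form, so Lamari's criterion (\cite[Theorem~5.5]{Lamari}) shows $f^*\tau_1|_{\tilde S}$ is big. The surface result \cite[Proposition~3.1, Theorem~5.3]{SohaibDyrefelt} then produces only finitely many curves $\tilde C\subseteq\tilde S$ with $f^*\tau_1\cdot\tilde C\le 0$. Since a curve $C\subseteq S$ not contained in the singular locus of $S$ satisfies $\tau_1\cdot C = f^*\tau_1\cdot\tilde C$ by the projection formula, its strict transform is among these finitely many $\tilde C$; together with the finitely many one-dimensional components of the singular locus of $S$, this bounds the destabilizing curves lying in $S$ and completes the argument.

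I expect the genuine obstacle to be precisely the bigness of $f^*\tau_1|_{\tilde S}$: because the relevant surfaces satisfy $S\subseteq E_{nK}(\tau_1)\subseteq E_{nK}(\tau_2)$, the restriction $\tau_1|_S$ sits in the non-K\"ahler regime and its bigness is not automatic. What rescues it — and makes the gMA case in fact \emph{cleaner} than the J-equation, where a surface-adapted slope $\mu_{\alpha,\beta}(S)$ had to be introduced to force bigness by hand — is that gMA-semistability at dimension two directly bounds $(\alpha-c_1\beta)^2\cdot S$ below by the strictly positive quantity $(r_2-r_1)^2\,\beta^2\cdot S$. Two secondary points require routine care: the degenerate case $c_2=0$, where $r_1=r_2$ and the equation collapses to the J-equation so that one simply invokes Theorem~\ref{main thm 3-folds}; and the independence of the finite list from the chosen resolution, which follows from the functoriality of negative parts of Zariski decompositions exactly as recorded in the remark after Theorem~\ref{main thm 3-folds}.
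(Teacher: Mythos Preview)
Your proof is correct and follows essentially the same approach as the paper: factorize $Q_p$, localize destabilizers in $E_{nK}(\tau_p)$ via Lemma~\ref{lem:main lemma}, and for curves lying in a surface component $S$ of $E_{nK}(\tau_1)$ use gMA-semistability of $S$ to verify the two Lamari inequalities (the paper pairs $\tau_1$ with the K\"ahler class $\alpha+s\beta$ rather than with $\alpha$, but your choice works equally well). One small slip in your closing remarks: the degenerate case is $c_1=c_2=0$ (Monge--Amp\`ere, where $\tau_1=\tau_2=\alpha$ is K\"ahler and there are no destabilizers), not $c_2=0$; when $c_2=0$ but $c_1>0$ one has $r_2=2c_1>c_1=r_1$ and your main argument already applies without modification.
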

    \begin{proof}
        The argument is very similar to the proof of Theorem \ref{main thm 3-folds}. We sketch the parts that are identical and elaborate on the parts that are different. 
        Let us briefly recall the notation. The polynomials $Q_p(x,y)$ for $p = 1, 2$ are given by
        \[
        Q_2(x,y) = \frac{1}{2}x^2 - c_1 xy - c_2 y^2, \quad Q_1(x,y) = x - c_1y. 
        \]
        Observe that if $c_1=c_2 = 0$, then the equation reduces to the complex Monge-Amp\`ere equation and there are no destabilisers in this case. So we may assume that $c_1>0$ or $c_2 > 0$. When the triple $(X,\alpha,\mathbf\Theta)$ is (gMA)-semistable, a destabilising surface $S$ (respectively, a destabilising curve $C$) satisfies 
        \[
        \int_S Q_2 (\alpha,\beta) = 0, \quad \textrm{ (respectively, } \int_C Q_1(\alpha,\beta) = 0\textrm{).} 
        \]
        By what was said in Example \ref{example gMA on three-folds}, we can write 
        \[
        Q_2(\alpha,\beta) =\frac{1}{2} (\alpha-r_2\beta)\cdot (\alpha + s\beta)
        \]
        where 
        \[
        r_2 = c_1+\sqrt{c_1^2+2c_2}, \quad s = -c_1 + \sqrt{c_1^2+2c_2}.
        \]
        It is clear that $r_2 > 0,s \geq 0$. Recall also that $\tau_2(\alpha,\beta) = \alpha-r_2\beta$. Now, if $S$ is a (gMA)-destabilising surface, then by Lemma \ref{lem:main lemma} we must have $S\subseteq E_{nK}(\tau_2(\alpha,\beta))$, and there are only finitely many surfaces $S$ which are contained in $E_{nK}(\tau_2(\alpha,\beta))$. 
        Suppose $C$ is a (gMA)-destabilising curve. Then, we have 
        \[
        \int_C \alpha - c_1 \beta = 0.
        \]
        Now $\alpha-c_1\beta \geq \alpha - r_p\beta$ (either by direct observation, or by Lemma \ref{lem:factorclassesdescend}), so $\alpha-c_1\beta$ is a big class as well. Once again, by Lemma \ref{lem:main lemma} this implies that $C\subseteq E_{nK}(\alpha-c_1\beta)$. Thus, $C$ is either an irreducible curve component of $E_{nK}(\alpha-c_1\beta)$ or $C$ is contained in an irreducible surface component $S$ of $E_{nK}(\alpha-c_1\beta)$.  It therefore remains to show that for each of the finitely many surface components $S$ of $E_{nK}(\alpha-c_1\beta)$, there are only finitely many curves $C \subseteq S$ such that 
        \[
        \int_C \alpha-c_1\beta = 0.
        \]
        But now we note that 
        \[
        \frac{1}{2}\int_S (\alpha-c_1\beta)^2 = \int_S \left(\frac{1}{2}\alpha^2 - c_1 \alpha\cdot\beta + c_1^2\beta^2\right) = \int_S Q_2(\alpha,\beta) + \frac{1}{2}(c_1^2+2c_2)\int_S\beta^2 > 0,
        \]
        and 
        \[
        \frac{1}{2}\int_S (\alpha-c_1\beta)\cdot (\alpha + s\beta) = \int_S Q_2(\alpha,\beta) + \sqrt{c_1^2 + 2c_2}\int_S \beta\cdot(\alpha + s\beta) > 0.  
        \]
        This shows that, if $S$ is smooth, then $(\alpha-c_1\beta)|_S$ is a big class on $S$. Now we argue exactly as in the proof of \ref{main thm 3-folds} to conclude.
    \end{proof}
    \begin{remark}
        In fact, with exactly the same strategy of proof as in Section 3, one can also obtain the more general analogues of Theorem \ref{thm analyticity} and Proposition \ref{prop:rigidity} for the generalised Monge-Amp\`ere equations when $\dim X = 3$, making necessary adjustments just as in the proof of Theorem \ref{thm:gMAdim3}.
    \end{remark}
    \begin{theorem}\label{thm:gMAanalyticity}
    Suppose $X$ is a compact Kähler manifold of dimension $\dim X = 3$, $\alpha,\beta\in\mathcal K_X$ Kähler classes, with $\omega\in\alpha,\theta\in\beta$ Kähler forms. Let $\mathbf\Theta = c_1 \theta + c_2 \theta^2 + f\theta^3 $ be such that $(X,\alpha,\mathbf\Theta)$ satisfies the cohomological constraint \eqref{eqn:gMAcoh} and the positivity condition given in \cite[(1.2)]{DatarPingali}. Suppose that the factor class $\tau_2(\alpha,\beta)$ at dimension $2$ associated to $(X,\alpha,\mathbf\Theta)$ is big. Let $V_{\alpha,\mathbf\Theta}$ be the union of all irreducible subvarieties $Z$ of $X$ such that 
    \[\int_Z \exp(\alpha)\cdot(1-[\mathbf\Theta]) \leq 0.\] Then,
    \begin{enumerate}
        \item $V_{\alpha,\mathbf\Theta}$ is a proper analytic subset of $X$, 
        \item Every irreducible component of $V_{\alpha,\mathbf\Theta}$ is rigid in the sense of Proposition \ref{prop:rigidity}. 
    \end{enumerate}
\end{theorem}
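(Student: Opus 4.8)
The plan is to follow, almost verbatim, the proofs of Theorem \ref{thm analyticity} and Proposition \ref{prop:rigidity}, substituting the factor classes for the J-equation classes $\mu_{\alpha,\beta}\alpha - p\beta$ and feeding in the positivity computations already carried out in the proof of Theorem \ref{thm:gMAdim3}. Recall from Example \ref{example gMA on three-folds} that $Q_2(\alpha,\beta) = \tfrac{1}{2}(\alpha - r_2\beta)\cdot(\alpha + s\beta)$ with $r_2 = c_1 + \sqrt{c_1^2+2c_2} > 0$ and $s = -c_1 + \sqrt{c_1^2+2c_2} \geq 0$, so that $\tau_2(\alpha,\beta) = \alpha - r_2\beta$ is the surface factor class (big by hypothesis), $\alpha + s\beta$ is K\"ahler, and the curve factor class is $\tau_1(\alpha,\beta) = \alpha - c_1\beta$. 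By Lemma \ref{lem:factorclassesdescend} we have $\tau_1 \geq \tau_2$, i.e. $\alpha - c_1\beta = (\alpha - r_2\beta) + (r_2 - c_1)\beta$ is a big class plus a nef class, whence $\alpha - c_1\beta$ is itself big and plays exactly the role that $\mu_{\alpha,\beta}\alpha - \beta$ plays in the J-equation argument. As in Theorem \ref{thm:gMAdim3} we may assume $c_1 > 0$ or $c_2 > 0$, since otherwise the equation degenerates to the complex Monge--Amp\`ere equation and $V_{\alpha,\mathbf\Theta} = \varnothing$.

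For part (1), I would first dispatch the surfaces: a surface $S$ with $\int_S \exp(\alpha)\cdot(1-[\mathbf\Theta]) = \int_S Q_2(\alpha,\beta) \leq 0$ must, by Lemma \ref{lem:main lemma} applied to the big class $\tau_2$ and the K\"ahler class $\alpha + s\beta$, be contained in $E_{nK}(\tau_2)$; hence there are only finitely many such, whose union I denote $V_2$. A destabilising curve $C$ satisfies $\int_C (\alpha - c_1\beta) \leq 0$, so by Lemma \ref{lem:main lemma} and bigness of $\alpha - c_1\beta$ we have $C \subseteq E_{nK}(\alpha - c_1\beta)$. Discarding the finitely many one-dimensional components of this set, any remaining $C$ lies in an irreducible surface $S \subseteq E_{nK}(\alpha - c_1\beta)$. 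If $S \subseteq V_2$ there is nothing to prove; otherwise $\int_S Q_2(\alpha,\beta) > 0$, and the two estimates from the proof of Theorem \ref{thm:gMAdim3}, namely $\tfrac{1}{2}\int_S (\alpha - c_1\beta)^2 = \int_S Q_2 + \tfrac{1}{2}(c_1^2+2c_2)\int_S \beta^2 > 0$ and $\tfrac{1}{2}\int_S (\alpha - c_1\beta)\cdot(\alpha + s\beta) = \int_S Q_2 + \sqrt{c_1^2+2c_2}\int_S \beta\cdot(\alpha + s\beta) > 0$, show that $(\alpha - c_1\beta)|_S$ is big. Then \cite[Proposition 3.1]{SohaibDyrefelt} (for smooth $S$, and its resolution version for singular $S$) bounds the number of curves $C \subseteq S$ with $(\alpha - c_1\beta)\cdot C \leq 0$. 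Assembling the finitely many contributions exactly as in Theorem \ref{thm analyticity} exhibits $V_{\alpha,\mathbf\Theta}$ as a finite union of irreducible analytic subsets of dimension at most two, hence a \emph{proper} analytic subset of $X$.

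For part (2), I would argue as in Proposition \ref{prop:rigidity}, assuming $X$ projective as there. Each irreducible surface component of $V_{\alpha,\mathbf\Theta}$ is a component of $E_{nK}(\tau_2)$, hence an irreducible component of the negative part of the divisorial Boucksom--Zariski decomposition of the big class $\tau_2$; by \cite[Proposition 3.13]{Boucksomthesis} it is exceptional, and so is the unique effective cycle in its homology class. For a curve component $C$, every irreducible surface $S'$ containing $C$ must satisfy $\int_{S'} Q_2(\alpha,\beta) > 0$ (otherwise $S'$ would itself lie in $V_{\alpha,\mathbf\Theta}$ and $C$ would not be a one-dimensional component); since $X$ is projective such $S'$ exist, and $(\alpha - c_1\beta)|_{S'}$ is big by the estimates above, so the reasoning of part (1) shows that either $C$ is contained in the singular locus of $S'$ or its strict transform under any resolution is an irreducible component of the negative part of the Zariski decomposition of the pullback of $\alpha - c_1\beta$, hence a curve of negative self-intersection.

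The main obstacle, just as in the J-equation case, is the treatment of singular surfaces $S$: one passes to a resolution $f:\tilde X \to X$ with smooth strict transform $\tilde S$ and must verify, via the projection formula, that the two positivity inequalities persist for $f^*(\alpha - c_1\beta)|_{\tilde S}$, then invoke Lamari's criterion \cite{Lamari} to deduce bigness on $\tilde S$, mirroring the corresponding step of Theorem \ref{main thm 3-folds}; independence of the resulting finite list from the chosen resolution is handled precisely by the remark following that theorem. I expect no essentially new difficulty beyond this bookkeeping, since replacing $\mu_{\alpha,\beta}\alpha - \beta$ and $\mu_{\alpha,\beta}\alpha - 2\beta$ by $\tau_1$ and $\tau_2$ preserves all the positivity inputs, the descent $\tau_1 \geq \tau_2$ of Lemma \ref{lem:factorclassesdescend} being exactly what guarantees that $\alpha - c_1\beta$ inherits bigness from $\tau_2$.
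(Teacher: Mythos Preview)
Your proposal is correct and follows precisely the route the paper indicates: the paper does not give a separate proof of this theorem but states in the preceding remark that one obtains it ``with exactly the same strategy of proof as in Section 3 \ldots\ making necessary adjustments just as in the proof of Theorem \ref{thm:gMAdim3}'', which is exactly what you have done. Your identification of $\tau_1 = \alpha - c_1\beta$ and $\tau_2 = \alpha - r_2\beta$ as the replacements for $\mu_{\alpha,\beta}\alpha - \beta$ and $\mu_{\alpha,\beta}\alpha - 2\beta$, the use of Lemma \ref{lem:factorclassesdescend} to propagate bigness, the two positivity estimates lifted from Theorem \ref{thm:gMAdim3}, and the handling of singular surfaces via resolution and Lamari's criterion all match the intended argument.
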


\subsection{Supercritical deformed Hermitian Yang-Mills equations} \label{subsection supercritical dHYM} Let $X$ be a smooth projective variety and let $\alpha$ be a $(1,1)$-cohomology class on $X$ (not necessarily Kähler) and $\theta$ a Kähler metric on $X$, as above. Then the \emph{deformed Hermitian Yang-Mills (dHYM)} equation seeks a smooth form $\omega \in \alpha$ such that the nowhere-zero complex valued function $F_\theta(\omega):X\to \mathbb C^*$ defined by 
$$
(\theta + \i \omega)^n = F_\theta(\omega) \theta^n
$$
has constant argument. In order to state the \emph{supercritical phase hypothesis} we shall need a further assumption, namely that there exists a representative $\omega \in \alpha$ such that $F_\theta(\omega)$ takes values in some half-plane. If such a representative $\omega \in \alpha$ exists, then in particular
$$
Z_{[\theta]}(\alpha):= \int_X (\theta + \i \omega)^n \neq 0.
$$ Let $\varphi_{[\theta]}(\alpha) : 
= \arg Z_{[\theta]}(\alpha)$, and define $\Phi_\theta(\omega)$ by the formula
$$
\Phi_\theta(\omega) = \sum_{k=1}^n \arctan \lambda_i
$$
where $\lambda_i$ are the (real) eigenvalues of the endomorphism $\theta^{-1}\omega$. It can be shown that $\Phi_\theta(\omega)$ is a smooth real-valued function $X \to (-n\frac{\pi}{2},n\frac{\pi}{2})$. Moreover, under the assumption that $F_\theta(\omega)$ takes values in some half-plane, we also have 
$$
\Phi_\theta(\omega) = \arg F_\theta(\omega) + 2\pi k
$$
for some integer $k$. We define the \emph{lifted angle} $\hat\varphi_{[\theta]}(\alpha)$  by 
$$
\hat\varphi_{[\theta]}(\alpha):= \varphi_{[\theta]}(\alpha) + 2\pi k.
$$
It follows (see \cite[Lemma 2.4]{CollinsXieYau}) that under this assumption, the integer $k$ is determined uniquely by $[\theta]$ and $\alpha$, i.e. if $\omega, \omega^\prime \in \alpha$ and $\theta,\theta^\prime \in [\theta]$ are such that both $F_\theta(\omega), F_\theta^\prime(\omega^\prime)$ take values in some (potentially different) half-planes, then $\Phi_\theta(\omega) - \arg F_\theta(\omega) = \Phi_{\theta^\prime}(\omega^\prime) - \arg F_{\theta^\prime}(\omega^\prime)$. We say that $(X,[\theta],\alpha)$ satisfies the \emph{supercritical phase hypothesis} if the lifted angle $\hat\varphi_{[\theta]}(\alpha) \in ((n-2)\frac{\pi}{2},n\frac{\pi}{2})$, or equivalently the \emph{complementary lifted angle} \[\hat\phi_{[\theta]}(\alpha):= n\frac{\pi}{2} - \hat\varphi_\theta(\alpha) \in (0,\pi).\] In this notation, the deformed Hermitian Yang-Mills equation seeks a smooth $(1,1)$-form representing $\alpha$ such that 
\begin{equation}\label{eqn:dHYM}
    \operatorname{Im}\left(e^{-\hat\phi_{[\theta]}(\alpha)}(\omega+\i\theta)^n\right)= 0.
\end{equation}Under these assumptions, Chu-Lee-Takahashi \cite{ChuLeeTakahashi} prove the following numerical criterion characterising solvability of the dHYM equation. 

\begin{theorem}[Chu-Lee-Takahashi]
    Let $X$ be a smooth projective variety, $\beta$ a Kähler class on $X$ and $\alpha$ a $(1,1)$-cohomology class. Suppose $(X,\beta, \alpha)$ satisfies the supercritical phase hypothesis. Then, the following are equivalent. 
    \begin{enumerate}
        \item For every Kähler form $\theta \in \beta$, there exists a smooth $(1,1)$-form $\omega$ representing $\alpha$ solving the deformed Hermitian Yang-Mills equation (\ref{eqn:dHYM}).
        \item For all proper irreducible subvarieties $V \subseteq X$, we have 
        \[
        \int_V \left(\operatorname{Re}(\alpha +\i \beta)^{\dim V} - \cot \hat\phi_{\beta}(\alpha) \operatorname{Im}(\alpha + \i\beta)^{\dim V}\right) > 0.
        \]
    \end{enumerate}
\end{theorem}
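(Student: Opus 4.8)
The plan is to prove the two implications separately: the necessity $(1)\Rightarrow(2)$ is elementary and follows by restriction and positivity, while the sufficiency $(2)\Rightarrow(1)$ is the substantive direction and is established by a continuity method whose closedness step rests on the theory of $\mathcal C$-subsolutions.

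First I would treat $(1)\Rightarrow(2)$. Fix $\theta\in\beta$ and a solution $\omega\in\alpha$ of \eqref{eqn:dHYM}, so that $(\theta+\i\omega)^n$ has constant argument equal to the lifted angle $\hat\varphi_{[\theta]}(\alpha)$. Restricting to a proper irreducible subvariety $V$ and rewriting the integrand $\operatorname{Re}(\alpha+\i\beta)^{\dim V}-\cot\hat\phi_\beta(\alpha)\operatorname{Im}(\alpha+\i\beta)^{\dim V}$ in terms of $\omega|_V$, $\theta|_V$ and the eigenvalues $\lambda_i$ of $\theta^{-1}\omega$, the supercritical phase hypothesis forces the angle vector $(\arctan\lambda_1,\dots,\arctan\lambda_n)$ into the region where $\sum_i\arctan\lambda_i>(n-2)\tfrac\pi2$. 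A direct computation shows that this is precisely the pointwise condition making the restricted integrand a strictly positive smooth measure on the regular locus of $V$; integrating yields the strict inequality in $(2)$.

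The heart of the matter is $(2)\Rightarrow(1)$. I would fix $\theta\in\beta$ and set up a continuity method, deforming from an obviously solvable equation to the target dHYM equation (for instance by interpolating the phase or by scaling $\theta$). Openness along the path is standard: once \eqref{eqn:dHYM} is rewritten in the concave form $\sum_i\arctan\lambda_i=\mathrm{const}$, the supercritical phase hypothesis guarantees ellipticity and invertibility of the linearised operator, so the implicit function theorem applies. Closedness reduces to uniform a priori estimates, and by Evans-Krylov together with Schauder theory it suffices to produce a uniform $C^0$ bound and a uniform bound on the complex Hessian. Following Sz\'ekelyhidi and Collins-Picard-Wu, the second-order estimate---the genuinely delicate one---goes through precisely because the operator is concave (supercriticality) and because a $\mathcal C$-subsolution is available along the entire path.

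The decisive and most difficult step, which I expect to be the main obstacle, is to show that the numerical criterion $(2)$ is equivalent to the existence of such a $\mathcal C$-subsolution uniformly along the deformation. Here I would translate the subsolution condition into a pointwise cone-membership condition for $\theta^{-1}\omega$ and then invoke the circle of ideas relating such cone conditions to the J-equation and inverse Hessian equations: the numerical inequalities over all subvarieties $V$ are exactly the Nakai-Moishezon conditions for the associated auxiliary equation, so subsolution existence follows from the corresponding solvability results of Gao Chen, Song, and Datar-Pingali. The real work lies in arranging this equivalence so that the subsolution persists along the whole continuity path; this is where the strictness of $(2)$ and the supercritical phase combine to furnish the uniform positivity that closes the argument.
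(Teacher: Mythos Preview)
The paper does not contain a proof of this theorem: it is stated as a result of Chu--Lee--Takahashi and attributed to \cite{ChuLeeTakahashi}, with no argument given in the present paper. The authors simply invoke it as a black box in order to formulate the notion of (dHYM-)stability and then apply their own techniques (Lemmas \ref{lem:EnK} and \ref{lem:main lemma}, factorization of $Q^{\mathrm{dHYM}}_{p,\hat\phi}$) to deduce finiteness of destabilizers. So there is no ``paper's own proof'' for your proposal to be compared against.

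That said, your sketch is broadly in the spirit of how such Nakai--Moishezon criteria are established in the literature (continuity method, openness via ellipticity in the supercritical range, closedness via $\mathcal C$-subsolution estimates, and the main input being that the numerical condition produces a subsolution). If you want a genuine comparison you should consult \cite{ChuLeeTakahashi} directly; the present paper neither reproduces nor outlines their argument.
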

\begin{definition} \label{Definition dHYM semistable}
 Suppose $(X,\beta,\alpha)$ satisfies the supercritical phase hypothesis. Then we say that $(X,\beta,\alpha)$ is \emph{(dHYM-)semistable (resp. stable)} if for every closed, proper, irreducible subvariety $V \subseteq X$ we have 
 \[
\int_{V} \left(\operatorname{Re}(\alpha +\i \beta)^{\dim V} - \cot \hat\phi_{\beta}(\alpha) \operatorname{Im}(\alpha + \i\beta)^{\dim V}\right) \geq 0 \; \; (\textrm{resp. } > 0).
 \]
 If $(X,\beta,\alpha)$ is not semistable, we say that it is unstable. If $V$ violates the above inequality, we say that it  \emph{(dHYM-)destabilises} the triple $(X,\beta,\alpha)$.
\end{definition}
From the point of view of the above theorem, we are naturally led to consider the following family of polynomials. Fix $\hat\phi \in (0,\pi)$ and $p\geq 0$, and define 
\[Q_{p,\hat\phi}^{\mathrm{dHYM}}(x,y):= \operatorname{Re}(x+iy)^p - (\cot \hat\phi) \operatorname{Im}(x+iy)^p.\]
\begin{lemma}
    The polynomials $Q_{p,\hat\phi}^{\mathrm{dHYM}}(x,y)$ admit the factorisation
    \begin{equation} \label{eqn:dHYM factorisation}
    Q_{p,\hat\phi}^{\mathrm{dHYM}}(x,y) = \prod_{l=0}^{p-1}\left(x-\cot\left(\frac{\hat\phi+l\pi}{p}\right)y\right).
    \end{equation}
\end{lemma}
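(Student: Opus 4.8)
The plan is to express $Q_{p,\hat\phi}^{\mathrm{dHYM}}$ as a single imaginary part and then read off its zero locus. Writing $w := (x+iy)^p$ and using the elementary identity
\[
\operatorname{Re}(w) - \cot\hat\phi\,\operatorname{Im}(w) = -\frac{1}{\sin\hat\phi}\,\operatorname{Im}\!\big(e^{-i\hat\phi}w\big),
\]
which is valid since $\sin\hat\phi\neq 0$ for $\hat\phi\in(0,\pi)$, we obtain
\[
Q_{p,\hat\phi}^{\mathrm{dHYM}}(x,y) = -\frac{1}{\sin\hat\phi}\,\operatorname{Im}\!\big(e^{-i\hat\phi}(x+iy)^p\big).
\]
First I would record that this is a homogeneous polynomial of degree $p$ in $(x,y)$ which, viewed as a polynomial in $x$, is monic of degree exactly $p$: the top term $x^p$ comes from $\operatorname{Re}(x+iy)^p$, whereas $\operatorname{Im}(x+iy)^p$ contributes only terms of $x$-degree at most $p-1$.

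Next I would determine the zero locus. Setting $y=1$ and writing $x+i = re^{i\psi}$ with $r>0$, the condition $\operatorname{Im}(x+i)=1>0$ forces $\psi\in(0,\pi)$, and then $x=\cot\psi$; as $x$ ranges over $\mathbb R$ the angle $\psi$ ranges bijectively over $(0,\pi)$. Since $e^{-i\hat\phi}(x+i)^p = r^p e^{i(p\psi-\hat\phi)}$, the equation $Q_{p,\hat\phi}^{\mathrm{dHYM}}(x,1)=0$ is equivalent to $\sin(p\psi-\hat\phi)=0$, that is $p\psi-\hat\phi\in\pi\mathbb Z$. For $\psi\in(0,\pi)$ and $\hat\phi\in(0,\pi)$ this has exactly the $p$ solutions $\psi_l = \frac{\hat\phi+l\pi}{p}$, $l=0,1,\dots,p-1$, yielding the $p$ real roots $x_l = \cot\psi_l$, which are pairwise distinct because $\cot$ is injective on $(0,\pi)$.

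Finally I would conclude by uniqueness of monic factorizations. The polynomial $x\mapsto Q_{p,\hat\phi}^{\mathrm{dHYM}}(x,1)$ is monic of degree $p$ and, by the previous step, has the $p$ distinct roots $\cot\!\big(\tfrac{\hat\phi+l\pi}{p}\big)$, $l=0,\dots,p-1$; hence it equals $\prod_{l=0}^{p-1}\big(x-\cot(\tfrac{\hat\phi+l\pi}{p})\big)$. Rehomogenizing, i.e. restoring the variable $y$ by multiplying each linear factor by the appropriate power of $y$, yields the claimed identity \eqref{eqn:dHYM factorisation}.

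The main point requiring care is not any hard analysis but the counting step: one must check that the $p$ angles $\frac{\hat\phi+l\pi}{p}$ all land strictly inside the open interval $(0,\pi)$, so that the corresponding cotangents are finite and pairwise distinct. This is precisely where the supercritical hypothesis $\hat\phi\in(0,\pi)$ enters, guaranteeing that the $p$ linear factors are genuine and distinct (equivalently, that $Q_{p,\hat\phi}^{\mathrm{dHYM}}(\cdot,1)$ has no repeated or infinite roots, so that the degree count closes up exactly).
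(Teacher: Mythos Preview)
Your proof is correct and follows essentially the same route as the paper's: both identify the $p$ zeros of the homogeneous polynomial $Q_{p,\hat\phi}^{\mathrm{dHYM}}$ as the points with argument $(\hat\phi+l\pi)/p$ for $l=0,\dots,p-1$, observe that $\hat\phi\in(0,\pi)$ forces these angles into $(0,\pi)$ so the associated cotangents are finite and distinct, and then read off the linear factorization. The paper simply states the roots as $(\cos\tfrac{\hat\phi+l\pi}{p},\sin\tfrac{\hat\phi+l\pi}{p})$ and notes $y_l>0$, whereas you dehomogenize first and use the monic-in-$x$ observation to close the argument; your version is more detailed but not substantively different.
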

\begin{proof}
    A straightforward calculation shows that 
    $Q^{\mathrm{dHYM}}_{p,\hat\phi}(x,y)$  
    admits $p$ distinct roots given by 
    \[
    (x_l,y_l) = \left(\cos\left(\frac{\hat\phi + l\pi}{p}\right),\sin\left(\frac{\hat\phi + l\pi}{p}\right)\right)
    \]
    for $l = 0, \ldots, p-1$. Now $\hat\phi \in (0,\pi)$ implies that $y_l > 0$ and the result follows.
\end{proof}

The above factorisation allows us to deduce the following analogue of Theorem \ref{main thm 3-folds}.

\begin{theorem}\label{thm:dHYM}
Let $X$ be a smooth projective variety of dimension $n \leq 4$, $\alpha,\beta$ Kähler classes on $X$. Suppose $(X,\beta, \alpha)$ satisfies the supercritical phase hypothesis. If $\dim X = 4$ suppose moreover that $\hat\phi_\beta(\alpha)\in (\pi/2,\pi)$. If the classes 
\[
\tau^{\mathrm{dHYM}}_p(\alpha, \beta) := \alpha - \cot\left(\frac{\hat\phi_{\beta}(\alpha)}{p}\right)\beta
\]
for $p=1,\ldots,\dim X - 1$ lie in the $(p+1)$-modified Kähler cone, then there exist at most finitely many (dHYM)-destabilising proper irreducible subvarieties $V \subseteq X$, i.e. proper subvarieties $V$ satisfying
\[
\int_V \left(\operatorname{Re}(\alpha +\i \beta)^{\dim V} - \cot \hat\phi_{\beta}(\alpha) \operatorname{Im}(\alpha + \i\beta)^{\dim V}\right) \leq 0.
\]
\end{theorem}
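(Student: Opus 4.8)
The plan is to run the same argument as for the factorizable gMA equations treated above, using the factorization \eqref{eqn:dHYM factorisation} in place of the gMA factorization and reading off $\tau^{\mathrm{dHYM}}_p(\alpha,\beta)$ as the analogue of the factor class. Concretely, writing $\hat\phi := \hat\phi_\beta(\alpha)$, a proper irreducible $V$ of dimension $p$ that dHYM-destabilizes satisfies $\int_V Q^{\mathrm{dHYM}}_{p,\hat\phi}(\alpha,\beta) \le 0$, and the factorization lets me write
\[
Q^{\mathrm{dHYM}}_{p,\hat\phi}(\alpha,\beta) = \tau^{\mathrm{dHYM}}_p(\alpha,\beta)\cdot\prod_{l=1}^{p-1}\Big(\alpha-\cot\big(\tfrac{\hat\phi+l\pi}{p}\big)\beta\Big).
\]
The goal is then to show that this integral is strictly positive whenever $V\not\subseteq E_{nK}(\tau^{\mathrm{dHYM}}_p(\alpha,\beta))$, so that every destabilizer of dimension $p$ lies in $E_{nK}(\tau^{\mathrm{dHYM}}_p(\alpha,\beta))$. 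Since $\tau^{\mathrm{dHYM}}_p(\alpha,\beta)\in\mathcal M_{p+1}\mathcal K$, Lemma \ref{lem:EnK} guarantees that this non-Kähler locus has no component of dimension $\ge p+1$, hence only finitely many $p$-dimensional components, which bounds the destabilizers in each fixed dimension and so in total.

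The decisive point, exactly as in the gMA case, is to feed the remaining factors $\alpha-\cot((\hat\phi+l\pi)/p)\beta$ (for $l=1,\dots,p-1$) into Lemma \ref{lem:main lemma} as the Kähler forms $\omega_1,\dots,\omega_{p-1}$, with $\tau^{\mathrm{dHYM}}_p(\alpha,\beta)$ playing the role of the big class. First I would record the sign of the auxiliary coefficients: for $l\ge 1$ the angle $(\hat\phi+l\pi)/p$ lies in $(\pi/2,\pi)$, so that $\cot((\hat\phi+l\pi)/p)<0$ and each remaining factor has the form $\alpha+(\text{positive})\beta$. This is precisely where the dimension bound $p\le n-1\le 3$ and the extra hypothesis $\hat\phi\in(\pi/2,\pi)$ when $n=4$ are consumed: the binding constraint is $(\hat\phi+\pi)/p>\pi/2$, which rearranges to $\hat\phi>(p-2)\tfrac{\pi}{2}$, automatic for $p\le 2$ and equivalent to $\hat\phi>\pi/2$ for $p=3$.

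The main obstacle is that, unlike the gMA setting where $\alpha$ is itself a Kähler class, here $\alpha$ is only a real $(1,1)$-class, so the auxiliary factors $\alpha+(\text{positive})\beta$ need not be Kähler and cannot be substituted verbatim into Lemma \ref{lem:main lemma}. The crux is therefore to upgrade the positivity of these factors. I expect either to prove directly that, under the supercritical phase hypothesis together with $\tau^{\mathrm{dHYM}}_p(\alpha,\beta)$ being big, the classes $\alpha-\cot((\hat\phi+l\pi)/p)\beta$ are already Kähler, or — failing a clean statement in low codimension — to imitate the threefold strategy of Theorems \ref{main thm 3-folds} and \ref{thm:gMAdim3}: restrict to a destabilizing $V$, invoke Lamari's criterion to show the restricted factor class is big on $V$, and control curves lying in destabilizing surfaces via the negative part of the Zariski decomposition on a resolution of singularities. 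I anticipate the hardest case to be $p=n-1=3$ on fourfolds, where two auxiliary factors occur simultaneously and their joint positivity along the destabilizing subvariety must be established; this is exactly the case that the hypothesis $\hat\phi\in(\pi/2,\pi)$ is designed to make tractable.
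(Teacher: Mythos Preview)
Your plan coincides with the paper's proof up to and including the key verification that the phase hypotheses force $\cot((\hat\phi+l\pi)/p)<0$ for $l=1,\dots,p-1$: your rearrangement $(\hat\phi+\pi)/p>\pi/2\iff \hat\phi>(p-2)\tfrac{\pi}{2}$ is exactly the computation the paper records (automatic for $p\le 2$, and forced by $\hat\phi\in(\pi/2,\pi)$ when $p=3$).

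Where you diverge is after this point. The paper does \emph{not} invoke any of the elaborate workarounds you anticipate. Having shown the auxiliary cotangents are negative, it simply asserts that the classes $\alpha-\cot((\hat\phi+l\pi)/p)\beta$ are K\"ahler, feeds them as the $\omega_i$ into Lemma~\ref{lem:main lemma} with $\tau_p^{\mathrm{dHYM}}(\alpha,\beta)$ as the big class, and concludes via Lemma~\ref{lem:EnK} exactly as in the gMA case. No Lamari criterion, no restriction to $V$, no Zariski decomposition on a resolution. Your concern that $\alpha$ is only a real $(1,1)$-class, so that $\alpha+(\text{positive})\beta$ need not \emph{a priori} be K\"ahler, is a fair reading of the hypotheses, and the paper's proof does not spell out any further justification for this step beyond the sign of the cotangent. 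But the machinery you propose to repair it---Lamari on restrictions and negative-part-of-Zariski control of curves, as in Theorems~\ref{main thm 3-folds} and~\ref{thm:gMAdim3}---is tied to the \emph{semistable} regime (one needs $\mu(S)\le\mu$-type inequalities to force bigness on the restriction), which is not assumed in the present statement; so those arguments would not transplant here without additional hypotheses. In short: your outline matches the paper's proof, your cotangent analysis is correct, and the obstacle you flag is a genuine point the paper glosses over---but your proposed detours are not what the paper does and would not close the gap as stated.
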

\begin{proof}
By hypothesis, the class $\tau^{\mathrm{dHYM}}_p(\alpha, \beta)$ is $(p+1)$-modified Kähler, and the classes 
\[
\alpha - \cot\left(\frac{\hat\phi_{\beta}(\alpha)+\pi l}{p}\right)\beta
\]
for $l = 1, \ldots, p-1$ are also Kähler. Indeed, $\hat\phi_\beta(\alpha)\in(0,\pi)$ implies that $\cot((\hat\phi_\beta(\alpha)+\pi)/2) < 0$, and $\hat\phi_\beta(\alpha)\in(\pi/2,\pi)$ implies that \[
\cot\left(\frac{\hat\phi_\beta(\alpha)+\pi}{3}\right) < 0, \quad \cot\left(\frac{\hat\phi_\beta(\alpha)+2\pi}{3}\right) <0.
\]

Thus, in view of the factorisation (\ref{eqn:dHYM factorisation}) and Lemma \ref{lem:main lemma}, any $p$-dimensional irreducible subvariety $V$ such that 
\[
\int_V \left(\operatorname{Re}(\alpha +\i \beta)^{p} - \cot \hat\phi_{\beta}(\alpha) \operatorname{Im}(\alpha + \i \beta)^{p}\right) \leq 0.
\]
must satisfy
\[
V\subseteq E_{nK}(\tau^{\mathrm{dHYM}}_p(\alpha, \beta)),
\]
but the latter set contains at most finitely many irreducible subvarieties of dimension $p$, since it does not contain any $(p+1)$-dimensional subvariety and is a proper analytic subset of $X$. Thus, $V$ must be one of the finitely many irreducible components of $E_{nK}(\tau_p^{\mathrm{dHYM}}(\alpha,\beta))$.
\end{proof}

Similar to the case of the $J$-equation, we obtain a slightly sharper result in the semistable case in the three dimensional setting.

\begin{theorem}
    Let $X$ be a smooth projective variety of dimension three. Let $\alpha,\beta$ be Kähler classes on $X$ such that $(X,\beta,\alpha)$ satisfies the supercritical phase hypothesis and is \emph{(dHYM-)}semistable. Suppose the class $\tau_2^{\mathrm{dHYM}}(\alpha,\beta)$ is big. Then, there are only finitely many \emph{(dHYM-)}destabilising subvarieties of $X$. 
    
\end{theorem}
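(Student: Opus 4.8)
The plan is to mirror the proof of Theorem \ref{thm:gMAdim3}, with the factorization \eqref{eqn:dHYM factorisation} at $p=2$ playing the role of the factorization of $Q_2$ in the gMA case. Write $\hat\phi := \hat\phi_\beta(\alpha) \in (0,\pi)$ and abbreviate $a := \cot(\hat\phi/2)$, $b := \cot((\hat\phi+\pi)/2)$ and $c := \cot\hat\phi$, so that $\tau_2^{\mathrm{dHYM}}(\alpha,\beta) = \alpha - a\beta$ and $\tau_1^{\mathrm{dHYM}}(\alpha,\beta) = \alpha - c\beta$. Since $\hat\phi/2 \in (0,\pi/2)$ we have $a > 0$, while $(\hat\phi+\pi)/2 \in (\pi/2,\pi)$ gives $b < 0$; hence the second factor $\alpha - b\beta = \alpha + |b|\beta$ is a Kähler class. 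One checks $ab = -1$ and $a+b = 2c$, so the factorization reads
\[
Q^{\mathrm{dHYM}}_{2,\hat\phi}(\alpha,\beta) = (\alpha - a\beta)\cdot(\alpha - b\beta) = \tau_2^{\mathrm{dHYM}}(\alpha,\beta)\cdot(\alpha - b\beta).
\]

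First I would dispose of destabilising surfaces. Any destabilising surface $S$ satisfies $\int_S Q^{\mathrm{dHYM}}_{2,\hat\phi}(\alpha,\beta) \leq 0$; since $\tau_2^{\mathrm{dHYM}}$ is big by hypothesis and $\alpha - b\beta$ is Kähler, Lemma \ref{lem:main lemma} forces $S \subseteq E_{nK}(\tau_2^{\mathrm{dHYM}}(\alpha,\beta))$, which has only finitely many surface components. Next, before treating curves, I would verify that $\tau_1^{\mathrm{dHYM}}(\alpha,\beta)$ is big: because $\cot$ is strictly decreasing on $(0,\pi)$ and $\hat\phi/2 < \hat\phi$, we get $a > c$, so $\tau_1^{\mathrm{dHYM}} = \tau_2^{\mathrm{dHYM}} + (a-c)\beta$ is the sum of a big class and a Kähler class, hence big (the dHYM analogue of Lemma \ref{lem:factorclassesdescend}). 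A destabilising curve $C$ satisfies $\int_C \tau_1^{\mathrm{dHYM}} \leq 0$, so Lemma \ref{lem:main lemma} gives $C \subseteq E_{nK}(\tau_1^{\mathrm{dHYM}})$. Thus $C$ is either one of the finitely many curve components of this proper analytic set, or it lies in one of its finitely many surface components $S$.

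The remaining, and principal, task is to bound the destabilising curves contained in a fixed surface component $S$ of $E_{nK}(\tau_1^{\mathrm{dHYM}})$; here I would show that $\tau_1^{\mathrm{dHYM}}|_S$ is big on $S$ and then invoke the surface result \cite[Prop. 3.1, Thm. 5.3]{SohaibDyrefelt} exactly as in the proof of Theorem \ref{main thm 3-folds}. The two intersection estimates required follow from the identities
\[
(\alpha - c\beta)^2 = Q^{\mathrm{dHYM}}_{2,\hat\phi}(\alpha,\beta) + (c^2+1)\beta^2, \qquad (\alpha - c\beta)\cdot(\alpha - b\beta) = \tfrac12 Q^{\mathrm{dHYM}}_{2,\hat\phi}(\alpha,\beta) + \tfrac12(\alpha - b\beta)^2,
\]
the first using $a+b=2c,\ ab=-1$ and the second using $c=(a+b)/2$. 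Since $(X,\beta,\alpha)$ is dHYM-semistable, $\int_S Q^{\mathrm{dHYM}}_{2,\hat\phi}(\alpha,\beta) \geq 0$ for \emph{every} surface $S$, and as $\beta$ and $\alpha - b\beta$ are Kähler both right-hand sides integrate to something strictly positive over $S$. When $S$ is smooth, Lamari's criterion \cite[Thm. 5.5]{Lamari} then yields that $\tau_1^{\mathrm{dHYM}}|_S$ is big; when $S$ is singular I would pass to a resolution $f:\tilde X \to X$ and run the same computation with $f^*\tau_1^{\mathrm{dHYM}}$ and the nef class $f^*(\alpha - b\beta)$, verbatim as in Theorem \ref{main thm 3-folds}. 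Bigness of $\tau_1^{\mathrm{dHYM}}|_{\tilde S}$ then confines the strict transforms of destabilising curves $C\subseteq S$ not contained in the singular locus of $S$ to the finitely many curves of nonpositive degree against a big class on a smooth surface, while curves inside the singular locus of $S$ are automatically finite in number.

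I expect the main obstacle to be organisational rather than conceptual: the genuinely new input over the gMA case is checking the signs $a>0$, $b<0$, $a>c$ and the two factor identities, after which the surface-in-$E_{nK}$ reduction and the resolution argument transfer without change. The only points demanding care are the degenerate regime $\hat\phi = \pi/2$ (where $c=0$, $\tau_1^{\mathrm{dHYM}} = \alpha$ is Kähler and $E_{nK}(\tau_1^{\mathrm{dHYM}}) = \varnothing$, so no destabilising curves occur) and ensuring that Lamari's criterion is applied with the nef---rather than Kähler---pullback class on the resolution, precisely as in the threefold J-equation argument.
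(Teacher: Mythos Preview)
Your proposal is correct and follows essentially the same route as the paper's proof: confine destabilising surfaces to $E_{nK}(\tau_2^{\mathrm{dHYM}})$, confine destabilising curves to $E_{nK}(\tau_1^{\mathrm{dHYM}})$, and then use semistability plus Lamari on each surface component $S$ (passing to a resolution when $S$ is singular) to show $\tau_1^{\mathrm{dHYM}}|_S$ is big. The only cosmetic differences are that the paper separates the regime $\hat\phi\in[\pi/2,\pi)$ (where $\tau_1^{\mathrm{dHYM}}$ is already K\"ahler) as a preliminary case and pairs $\tau_1^{\mathrm{dHYM}}$ against $\alpha$ rather than against your K\"ahler class $\alpha-b\beta$ for the second Lamari inequality; your identities $a+b=2c$, $ab=-1$, $c^2+1=\csc^2\hat\phi$ reproduce exactly the paper's computations in a slightly more uniform packaging.
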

\begin{proof}
    If $S$ is any destabilising surface in $X$, then the proof of the above theorem shows that $S\subseteq E_{nK}(\tau_2^{\mathrm{dHYM}}(\alpha,\beta))$ and so there are always at most finitely many destabilising surfaces.

    A destabilising curve $C$ must satisfy 
    \[
    C \subseteq E_{nK}(\tau_1^{\mathrm{dHYM}}(\alpha,\beta)) = E_{nK}(\alpha - \cot \hat\phi_{\theta}(\alpha) \beta).
    \]
    If $\hat\phi:= \hat\phi_\beta(\alpha) \in[\pi/2,\pi)$, then $\tau_1^{\mathrm{dHYM}}(\alpha,\beta)$ is a Kähler class, so there are no such curves. So, suppose $\hat\phi \in (0,\pi/2)$, i.e. $\cot \hat\phi > 0$. Now observe that by semistability, for any surface $S$, we have 
    \[
    0\leq \int_S (\alpha-\cot(\hat\phi/2)\beta)\cdot(\alpha-\cot((\hat\phi+\pi)/2)\beta) = \int_S(\alpha - \cot(\hat\phi)\beta)^2 - (\csc(\hat\phi))^2\int_S \beta^2,
    \]
    where we have used the identity $\cot x = \cot 2x + \csc 2x$. Thus, for any surface $S$ and any curve $C\subseteq S$, we have that
    \[
    \int_S \tau_1^\mathrm{dHYM}(\alpha,\beta)^2=\int_S (\alpha-\cot(\hat\phi)\beta)^2 \geq (\csc \hat\phi)^2\int_S\beta^2 > 0, \quad \int_C \tau_1^\mathrm{dHYM}(\alpha,\beta)\geq 0. 
    \]
    The second inequality follows by the semistability hypothesis. Thus, if $S$ is any surface component of $E_{nK}(\tau_1^{\mathrm{dHYM}}(\alpha,\beta))$, and $S$ is smooth, the class $\tau_1^{\mathrm{dHYM}}(\alpha,\beta)$ restricts to a nef class on $S$ which is big and therefore we conclude as in the proof of Theorem \ref{main thm 3-folds} above that $S$ contains only finitely many destabilising curves (for they are among the irreducible components of the negative part of the Zariski decomposition of the big class $\tau_1^\mathrm{dHYM}(\alpha,\beta)|_S$). If $S$ is singular, we again conclude as in the proof of Theorem \ref{main thm 3-folds} above by taking a resolution of $S$. 
\end{proof}
\begin{corollary}
    Let $X$ be a smooth projective variety of dimension three. Let $\alpha,\beta$ be Kähler classes on $X$ such that $(X,\beta,\alpha)$ satisfies the supercritical phase hypothesis. Suppose the class $\tau_2^{\mathrm{dHYM}}(\alpha,\beta)$ is big. Then, the union $V^{\mathrm{dHYM}}_{\alpha,\beta}$ of all subvarieties of $X$ that \emph{(dHYM-)}destabilise the triple $(X,\beta,\alpha)$ is a proper analytic subset of $X$. 
\end{corollary}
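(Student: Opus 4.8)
The plan is to follow the strategy of the proof of Theorem \ref{thm analyticity} for the J-equation, replacing the slope quantities by the dHYM factor classes and exploiting the factorization \eqref{eqn:dHYM factorisation}. Write $\hat\phi := \hat\phi_\beta(\alpha)$ throughout. Since $\dim X = 3$, a destabilizing proper subvariety is either a surface ($p=2$) or a curve ($p=1$), and I would handle the two cases separately before assembling the result.

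First I would dispose of the surfaces. A destabilizing surface $S$ satisfies $\int_S Q_{2,\hat\phi}^{\mathrm{dHYM}}(\alpha,\beta)\leq 0$, and by \eqref{eqn:dHYM factorisation} we have $Q_{2,\hat\phi}^{\mathrm{dHYM}}(\alpha,\beta) = \tau_2^{\mathrm{dHYM}}(\alpha,\beta)\cdot(\alpha - \cot((\hat\phi+\pi)/2)\beta)$, where the second factor is Kähler because $(\hat\phi+\pi)/2\in(\pi/2,\pi)$ forces $\cot((\hat\phi+\pi)/2)<0$. Since $\tau_2^{\mathrm{dHYM}}(\alpha,\beta)$ is big by hypothesis, Lemma \ref{lem:main lemma} forces $S\subseteq E_{nK}(\tau_2^{\mathrm{dHYM}}(\alpha,\beta))$, and there are only finitely many irreducible surface components of this non-Kähler locus. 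Let $V_2$ be their finite union.

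Next I would treat the curves. If $\hat\phi\geq\pi/2$ then $\cot\hat\phi\leq 0$, so $\tau_1^{\mathrm{dHYM}}(\alpha,\beta)=\alpha-\cot\hat\phi\,\beta$ is Kähler and $\int_C\tau_1^{\mathrm{dHYM}}(\alpha,\beta)>0$ for every curve $C$; hence there are no destabilizing curves and $V^{\mathrm{dHYM}}_{\alpha,\beta}=V_2$ is already a proper analytic subset. So assume $\hat\phi\in(0,\pi/2)$, whence $\cot\hat\phi>0$. Using the half-angle identity $\cot(\hat\phi/2)=\cot\hat\phi+\csc\hat\phi$ we get $\tau_1^{\mathrm{dHYM}}(\alpha,\beta)=\tau_2^{\mathrm{dHYM}}(\alpha,\beta)+\csc\hat\phi\,\beta$, a sum of a big class and a Kähler class, hence big. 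A destabilizing curve $C$ satisfies $\int_C\tau_1^{\mathrm{dHYM}}(\alpha,\beta)\leq 0$, so by Lemma \ref{lem:main lemma} we obtain $C\subseteq E_{nK}(\tau_1^{\mathrm{dHYM}}(\alpha,\beta))$. This locus has finitely many curve components, so it remains only to control the destabilizing curves lying on a surface component $S$ of $E_{nK}(\tau_1^{\mathrm{dHYM}}(\alpha,\beta))$ that is not itself destabilizing (if such $S$ is destabilizing then $S\subseteq V_2$ and the curves on it are already accounted for).

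The key step, and the main obstacle, is to show that for such a non-destabilizing surface $S$ — i.e. one with $\int_S Q_{2,\hat\phi}^{\mathrm{dHYM}}(\alpha,\beta)>0$ — the restriction $\tau_1^{\mathrm{dHYM}}(\alpha,\beta)|_S$ is big on $S$. From the identity $Q_{2,\hat\phi}^{\mathrm{dHYM}}(\alpha,\beta)=(\alpha-\cot\hat\phi\,\beta)^2-\csc^2\hat\phi\,\beta^2$ one gets $\int_S(\tau_1^{\mathrm{dHYM}})^2=\int_S Q_{2,\hat\phi}^{\mathrm{dHYM}}(\alpha,\beta)+\csc^2\hat\phi\int_S\beta^2>0$, and the same positivity of $\int_S Q_{2,\hat\phi}^{\mathrm{dHYM}}$ yields $\int_S\tau_1^{\mathrm{dHYM}}\cdot\alpha\geq\cot\hat\phi\int_S\alpha\cdot\beta>0$, exactly as in the proof of the preceding theorem. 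For smooth $S$ these two inequalities make $\tau_1^{\mathrm{dHYM}}|_S$ big by Lamari's criterion \cite[Theorem 5.5]{Lamari}; for singular $S$ I would pass to a resolution $f:\tilde X\to X$ with $\tilde S$ smooth, exactly as in the proof of Theorem \ref{main thm 3-folds}, and rerun the intersection computation on $\tilde S$ via the projection formula. Then \cite[Proposition 3.1]{SohaibDyrefelt} applied on $\tilde S$ gives only finitely many curves $C$ with $\int_C\tau_1^{\mathrm{dHYM}}\leq 0$. Since there are finitely many surface components $S$, this produces only finitely many destabilizing curves outside $V_2$. Assembling everything, $V^{\mathrm{dHYM}}_{\alpha,\beta}$ is the union of $V_2$ with finitely many curves, hence a proper analytic subset of $X$.
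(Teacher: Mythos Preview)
Your argument is correct and is precisely the approach the paper takes: the paper's own proof of this corollary simply says the argument is ``very nearly identical to the proof of Theorem~\ref{thm analyticity}'' with straightforward changes, and you have supplied exactly those changes (including the bigness-on-$S$ computation lifted from the preceding dHYM theorem). One minor slip in the final line: since your $V_2$ is the union of \emph{all} surface components of $E_{nK}(\tau_2^{\mathrm{dHYM}}(\alpha,\beta))$ rather than only the destabilizing ones, the correct conclusion is that $V^{\mathrm{dHYM}}_{\alpha,\beta}$ equals the union of the finitely many \emph{destabilizing} surfaces together with finitely many additional curves --- still a proper analytic subset.
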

\begin{proof}
    This argument is very nearly identical to the proof of Theorem \ref{thm analyticity} above after making appropriate straightforward changes.
\end{proof}

\section{Wall-chamber decompositions for PDE} \label{Section chamber}
One key motivation behind results like Theorems \ref{thm:gMAanalyticity} and  \ref{thm analyticity} is to understand the nature of stability (and hence solvability) when we vary the PDEs in continuous families. This motivation comes from analogous questions in algebraic geometry, where one finds that if certain numerical invariants are fixed, the space of all stability data admits a locally finite wall-chamber decomposition, with the notion of stability of any given object being the same for each element in any given chamber. By analogy, one should therefore expect something similar to happen in the case of PDEs. In \cite{SohaibDyrefelt}, the authors do indeed orove that this holds in the case of surfaces for the $J$-equation, dHYM equation, and, under mild and natural hypotheses, for $Z$-critical equations. In this Section, we aim to show that in higher dimensions, under the positivity conditions described in the previous Sections, we are still able to get closely analogous results. These results are the first in the literature of their kind in higher dimensions.
\subsection{Wall-chamber decompositions for the J-equation}
 
Fix $$S= \{\alpha_1,\dots,\alpha_s\} \subseteq \mathcal{K}_X,$$ a finite set of K\"ahler classes on $X$. Given $\beta\in\mathcal K_X$ denote 
\[
\mathfrak{M}_\beta = \{\alpha \in S \ | \ (X,\alpha,\beta) \textrm{ is stable}\}.
\]
An interesting conjecture, inspired by variation of stability in algebraic geometry, is the following.  
\begin{conj} \label{Conj wc dec}
There exists a locally finite collection of closed real submanifolds $W_\sigma$ of codimension one of the Kähler cone $\mathcal K_X$ (that is, for each $\beta \in \mathcal K_X$, there exists an open neighbourhood of $\beta$ which meets at most finitely many of the $W_\sigma$) such that for each connected component $K$ of 
\[
\mathcal K_X \setminus \bigcup_\sigma W_\sigma
\]
and each $\beta,\beta^\prime \in K$, we have that $\mathfrak{M}_\beta = \mathfrak{M}_{\beta^\prime}$.
\end{conj}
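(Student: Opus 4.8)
The plan is to exhibit the walls $W_\sigma$ explicitly as vanishing loci of the natural slope functionals, reduce the statement to a \emph{local finiteness} assertion, and then extract that finiteness from Lemma \ref{lem:wallchamberlemma}. The starting point is that for a fixed K\"ahler class $\alpha$ both $\mu_{\alpha,\beta}$ and $\mu_{\alpha,\beta}(V)$ are \emph{linear} in the argument $\beta$. Hence for each $\alpha_i\in S$ and each proper irreducible subvariety $V\subseteq X$ the function
\[
\ell_{i,V}(\beta) := \mu_{\alpha_i,\beta} - \mu_{\alpha_i,\beta}(V)
\]
is the restriction to $\mathcal K_X$ of a linear functional on $H^{1,1}(X,\mathbb R)$. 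Whenever $\ell_{i,V}\not\equiv 0$ its zero set $W_{i,V} := \{\beta\in\mathcal K_X : \ell_{i,V}(\beta)=0\}$ is the intersection of $\mathcal K_X$ with a hyperplane, hence a smooth, relatively closed, codimension-one submanifold; the degenerate pairs with $\ell_{i,V}\equiv 0$ are discarded, since then $(X,\alpha_i,\beta)$ is never stable and $\alpha_i$ simply never belongs to $\mathfrak M_\beta$. I would take the $W_\sigma$ to be exactly these $W_{i,V}$, as $(i,V)$ ranges over the (locally finite, see below) set of pairs for which $V$ destabilises $(X,\alpha_i,\beta)$ for some $\beta\in\mathcal K_X$.

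Granting local finiteness, the conjecture follows formally. Let $K$ be a connected component of $\mathcal K_X\setminus\bigcup_\sigma W_\sigma$ and fix $\alpha_i\in S$. For every $V$ giving a wall, $K$ avoids $W_{i,V}$, so the linear function $\ell_{i,V}$ has constant sign on the connected set $K$; and for every $V$ giving no wall one has $\ell_{i,V}>0$ throughout $\mathcal K_X$. Consequently the predicate ``$\ell_{i,V}(\beta)>0$ for all proper $V$'' — which is by definition stability of $(X,\alpha_i,\beta)$ — is either true at every point of $K$ or false at every point of $K$. Thus $\alpha_i\in\mathfrak M_\beta$ is constant on $K$, and since this holds for each of the finitely many $\alpha_i\in S$ we obtain $\mathfrak M_\beta=\mathfrak M_{\beta'}$ for all $\beta,\beta'\in K$.

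It remains to prove local finiteness, which is the heart of the matter. Fix $\beta_0\in\mathcal K_X$ and suppose we are in the regime where $\tau_p^{(i)}(\beta_0):=\mu_{\alpha_i,\beta_0}\alpha_i-p\beta_0$ is $(p+1)$-modified K\"ahler for every $\alpha_i\in S$ and every $p=1,\dots,n-1$ (the locus governed by Theorem \ref{main theorem higher dim}). The condition that a $p$-dimensional $V$ destabilise $(X,\alpha_i,\beta)$ reads $\int_V\tau_p^{(i)}(\beta)\cdot\alpha_i^{\,p-1}\le 0$. Applying Lemma \ref{lem:wallchamberlemma} to $\tau=\tau_p^{(i)}(\beta_0)$ with $\omega_1=\dots=\omega_{p-1}=\alpha_i$ produces a neighbourhood of $\tau_p^{(i)}(\beta_0)$ and a finite set of $p$-dimensional subvarieties containing every such $V$ once the tested class lies in that neighbourhood. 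Since $\beta\mapsto\tau_p^{(i)}(\beta)$ is continuous, intersecting over the finitely many pairs $(i,p)$ yields a single neighbourhood $U\ni\beta_0$ and a finite set $\mathcal F$ of subvarieties such that, for all $\beta\in U$, every destabiliser (for any $\alpha_i\in S$, in any dimension $p$) lies in $\mathcal F$. Hence only the finitely many walls $W_{i,V}$ with $V\in\mathcal F$ can meet $U$, which is exactly local finiteness near $\beta_0$. (In dimension three one may alternatively feed Proposition \ref{prop:finitelymanyirreds} into this step.)

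The main obstacle is precisely this last step: Lemma \ref{lem:wallchamberlemma} requires the classes $\tau_p^{(i)}(\beta_0)$ to be $(p+1)$-modified K\"ahler, and by openness of the modified K\"ahler cones this positivity, if it holds at $\beta_0$, persists on a neighbourhood, so the argument is valid on the open subset of $\mathcal K_X$ where it holds. Outside this locus local finiteness can genuinely fail — the blow-up examples in Section \ref{Section example}, where infinitely many lines destabilise once $\mu\alpha-(n-1)\beta$ ceases to be big, show this is not merely a defect of the method — so an unconditional proof would require replacing Lemma \ref{lem:wallchamberlemma} by a finiteness input that does not presuppose positivity. This is the reason the statement is phrased as a conjecture, and I expect removing the positivity hypothesis to be the genuinely hard part.
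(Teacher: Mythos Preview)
Your proposal is correct and matches the paper's own treatment: the statement is a conjecture, and what you (and the paper) actually establish is the restricted version on the locus where each $\tau_p^{(i)}(\beta)=\mu_{\alpha_i,\beta}\alpha_i-p\beta$ is $(p+1)$-modified K\"ahler, with the full conjecture left open for exactly the reason you give. The only cosmetic difference is that for local finiteness you invoke Lemma~\ref{lem:wallchamberlemma} directly and pull back via the continuous map $\beta\mapsto\tau_p^{(i)}(\beta)$, whereas the paper picks a simplex $\mathrm{conv}(\beta_1,\dots,\beta_s)$ around $\beta_0$, applies Theorem~\ref{main theorem higher dim} at each vertex, and uses linearity of $\beta\mapsto\mu_{\alpha,\beta}(Z)$ to propagate; since the proof of Lemma~\ref{lem:wallchamberlemma} is precisely this convex-hull argument, the two are equivalent.
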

\noindent Fixing the set $S$ of Kähler classes $\alpha$ and varying the auxiliary class $\beta$ corresponds roughly to varying the choice of stability condition, and this can be considered a rather simple analogue of the phenomena for Bridgeland stability for the special case of the $J$-equation. Here, $\mathfrak M_\beta$ serves as a very simple prototype of the corresponding moduli space.
As an application of our main results Theorem \ref{main thm 3-folds} (or \ref{main theorem higher dim}), we can prove this conjecture for a subset of all $\beta \in \mathcal{K}_X$ :
\begin{theorem}
Conjecture \ref{Conj wc dec} is true with $\mathcal K_X$ replaced by $\mathcal K^\prime$, the open convex 
cone comprising the classes $\beta \in \mathcal K_X$ such that for each $\alpha \in S$ and $p = 1 , 2, \ldots,\dim X-1$, the class $\mu_{\alpha,\beta}\alpha - p\beta$ lies in the $(p+1)$-modified Kähler cone.
\end{theorem}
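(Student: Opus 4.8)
The plan is to exhibit the walls explicitly as zero loci of the natural slope functionals and to reduce the statement to a local finiteness assertion supplied by Lemma \ref{lem:wallchamberlemma}. For a proper irreducible subvariety $V$ of dimension $p$ and a class $\alpha\in S$, I would set
\[
\ell_{V,\alpha}(\beta):=\mu_{\alpha,\beta}(V)-\mu_{\alpha,\beta}.
\]
Since $\mu_{\alpha,\beta}(V)$ and $\mu_{\alpha,\beta}$ are ratios whose numerators are linear in $\beta$ and whose denominators $\alpha^p\cdot[V]$ and $\alpha^n$ do not involve $\beta$, the map $\beta\mapsto\ell_{V,\alpha}(\beta)$ is the restriction of a linear functional on $H^{1,1}(X,\mathbb R)$. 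For every pair $(V,\alpha)$ with $\ell_{V,\alpha}\not\equiv 0$ I define the candidate wall
\[
W_{V,\alpha}:=\{\beta\in\mathcal K' : \ell_{V,\alpha}(\beta)=0\},
\]
which is the intersection of a genuine hyperplane with the open set $\mathcal K'$, hence a closed codimension-one submanifold of $\mathcal K'$. (Pairs with $\ell_{V,\alpha}\equiv 0$ correspond to classes $\alpha$ that are never stable for any $\beta$, so $\alpha\notin\mathfrak M_\beta$ identically; such $\alpha$ never affect the chamber structure and are discarded.) The theorem then follows once I show that this collection is locally finite and that $\beta\mapsto\mathfrak M_\beta$ is locally constant away from $\bigcup_{V,\alpha} W_{V,\alpha}$.

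The heart of the argument is local finiteness, which I would extract from Lemma \ref{lem:wallchamberlemma}. Fix $\beta_0\in\mathcal K'$. For each $\alpha\in S$ and each $p=1,\dots,n-1$, the defining property of $\mathcal K'$ guarantees that $\tau:=\mu_{\alpha,\beta_0}\alpha-p\beta_0$ is a $(p+1)$-modified Kähler class, so Lemma \ref{lem:wallchamberlemma} (taking the auxiliary Kähler classes $\omega_1,\dots,\omega_{p-1}$ all equal to $\alpha$) yields an open neighbourhood $U$ of $\tau$ in $H^{1,1}(X,\mathbb R)$ and a finite set $S_{\alpha,p}$ of $p$-dimensional subvarieties such that $\int_V\tau'\cdot\alpha^{p-1}\le 0$ with $\tau'\in U$ forces $V\in S_{\alpha,p}$. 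Because $\beta\mapsto\mu_{\alpha,\beta}\alpha-p\beta$ is continuous and sends $\beta_0$ into $U$, there is a neighbourhood $U_{\alpha,p}$ of $\beta_0$ on which $\mu_{\alpha,\beta}\alpha-p\beta\in U$. Recalling that $\mu_{\alpha,\beta}(V)\ge\mu_{\alpha,\beta}$ is exactly $\int_V(\mu_{\alpha,\beta}\alpha-p\beta)\cdot\alpha^{p-1}\le 0$, I would intersect over the finitely many $\alpha\in S$ and $p$ to obtain a neighbourhood $U_0:=\bigcap_{\alpha,p}U_{\alpha,p}$ of $\beta_0$ and a finite set $\mathcal V:=\bigcup_{\alpha,p}S_{\alpha,p}$ such that, for every $\beta\in U_0$, every $\alpha\in S$ and every proper subvariety $V$, the inequality $\mu_{\alpha,\beta}(V)\ge\mu_{\alpha,\beta}$ implies $V\in\mathcal V$. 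In particular a wall $W_{V,\alpha}$ can meet $U_0$ only if $V\in\mathcal V$, so only finitely many walls meet $U_0$; this is precisely local finiteness.

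It then remains to verify constancy of $\mathfrak M_\beta$ on chambers. Take $\beta_0\in\mathcal K'\setminus\bigcup W_{V,\alpha}$ and the associated $U_0,\mathcal V$ as above. For $\beta\in U_0$ and $\alpha\in S$, every $V\notin\mathcal V$ automatically satisfies $\mu_{\alpha,\beta}(V)<\mu_{\alpha,\beta}$ strictly, so $(X,\alpha,\beta)$ is stable if and only if the finitely many strict inequalities $\ell_{V,\alpha}(\beta)<0$, $V\in\mathcal V$, all hold. Shrinking $U_0$ to a neighbourhood $U'$ disjoint from the finitely many walls $W_{V,\alpha}$ with $V\in\mathcal V$ (possible since these are closed and miss $\beta_0$), each $\ell_{V,\alpha}$ is continuous and nowhere zero on $U'$, hence of constant sign; therefore $\mathfrak M_\beta$ is constant on $U'$. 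Thus $\beta\mapsto\mathfrak M_\beta$ is locally constant on $\mathcal K'\setminus\bigcup W_{V,\alpha}$, and being valued in the finite collection of subsets of $S$ it is constant on each connected component, as required.

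The hard part will be exactly the local finiteness in the second paragraph: a priori infinitely many subvarieties, of every codimension, could produce walls through a neighbourhood of $\beta_0$, and they must be controlled uniformly over the whole finite family $S$ and simultaneously in all dimensions $p$. This is where the positivity built into the definition of $\mathcal K'$ is essential, via Lemma \ref{lem:wallchamberlemma} (which itself rests on openness of the modified Kähler cones together with the non-Kähler locus estimate of Lemma \ref{lem:main lemma}): the $(p+1)$-modified Kähler hypothesis on $\mu_{\alpha,\beta}\alpha-p\beta$ confines all potential $p$-dimensional destabilizers to the finitely many irreducible components of a non-Kähler locus, and treating each codimension separately resolves the interaction between subvarieties of different dimension that makes the higher-dimensional problem subtle.
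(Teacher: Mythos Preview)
Your proposal is correct and follows essentially the same strategy as the paper: define the walls as the zero loci of the linear functionals $\beta\mapsto\mu_{\alpha,\beta}(V)-\mu_{\alpha,\beta}$, and prove local finiteness by exploiting the $(p+1)$-modified K\"ahler hypothesis on $\mu_{\alpha,\beta}\alpha-p\beta$ to trap all potential destabilizers inside finitely many non-K\"ahler loci. The only cosmetic difference is in how the local finiteness step is packaged: the paper chooses finitely many points $\beta_1,\dots,\beta_s\in\mathcal K'$ whose convex hull contains $\beta_0$, applies Theorem~\ref{main theorem higher dim} at each vertex, and then uses linearity of $\beta\mapsto\mu_{\alpha,\beta}(Z)$ directly, whereas you invoke Lemma~\ref{lem:wallchamberlemma} (which does the analogous convex-hull argument at the level of $(1,1)$-classes) and pull back via continuity of $\beta\mapsto\mu_{\alpha,\beta}\alpha-p\beta$.
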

\begin{proof}
    For every $\alpha \in S$ and every irreducible subvariety $Z\subseteq X$, let $W_{(Z,\alpha)}$ be the locus given by $\beta\in \mathcal K^\prime$ such that $\mu_{\alpha,\beta}(Z) = \mu_{\alpha,\beta}$. Let $K$ be any connected component of \[
    \mathcal K^\prime \setminus \bigcup_\sigma W_\sigma
    \]
    as $\sigma$ ranges over all pairs $(Z,\alpha)$ with $Z \subseteq X$ subvarieties and $\alpha\in S$. If $\beta_1,\beta_2 \in K$ then $\mu_{\alpha,\beta_i}-\mu_{\alpha,\beta_i}(Z)$ for $i=1,2$ have the same sign for each $(Z,\alpha)$, so the triples $(X,\alpha,\beta_i)$ are either both $J$-semistable or not, and so $\alpha \in \mathfrak{M}_{\beta_1}$ if and only if $\alpha \in \mathfrak{M}_{\beta_2}$. 
    
    It only remains to prove that for any $\beta\in \mathcal K^\prime$, there exists an open neighbourhood $U\subseteq \mathcal K^\prime$ which meets at most finitely many of the loci $W_\sigma$. 
    Since $\mathcal K^\prime$ is an open convex cone, we can pick $\beta_1,\ldots,\beta_s \in \mathcal K_X^\prime$ such that $\beta$ lies in the interior $U$ of the convex hull $\operatorname{conv}(\beta_1,\ldots,\beta_s)$. By Theorem \ref{main theorem higher dim}, for each of the finitely many $\alpha \in S$, there exists a (possibly empty) finite set $\mathcal D_{i,\alpha}$ of irreducible subvarieties $Z$ satisfying $\mu_{\alpha,\beta_i}(Z)\geq \mu_{\alpha,\beta_i}$. If $\tilde \beta \in U$ lies on some $W_\sigma$, then $\mu_{\alpha,\tilde\beta}(Z) = \mu_{\alpha,\tilde\beta}$ for some $\alpha\in S$ and some $Z$. By linearity of $\tilde\beta\mapsto \mu_{\alpha,\tilde\beta}(Z)$, it follows that $\mu_{\alpha,\beta_i}(Z) \geq \mu_{\alpha,\beta_i}$ for some $1\leq i \leq s$. But this implies that $Z \in \mathcal D_{i,\alpha}$. Thus, the only walls meeting $U$ are $W_{(Z,\alpha)}$ as $\alpha$ ranges over the finite set $S$ and $Z$ ranges over the finite set $\bigcup_i \mathcal D_{i,\alpha}$.
\end{proof}

\subsection{Wall-chamber decompositions for gMA equations}
As a final application of these ideas, we illustrate how our arguments for the $J$-equation can be generalised to a wider class of PDEs. We choose the setting of generalised Monge-Amp\`ere equations, all of which satisfy the condition of factorisability introduced in Definition \ref{def:factorisablegMAeq}.

Let $X$ be a smooth projective variety, with $\alpha,\beta\in\mathcal K_X$ Kähler classes and $\theta\in\beta$ a fixed Kähler form. Recall that a gMA equation is given by data $(X,\alpha,\mathbf\Theta)$ where 
\[
\mathbf\Theta = \sum_{k=1}^{n-1} c_k \theta^k + f\theta^n
\]
with the $c_k\geq 0$ and $f$ satisfying the cohomological condition \eqref{eqn:gMAcoh} and the  the required positivity condition given in \cite[(1.2)]{DatarPingali}. Recall moreover that the numerical criterion for gMA equations says that the equation \eqref{eqn:gMA} is solvable precisely when 
\[
\int_V \exp(\alpha)\cdot (1-[\Theta]) > 0
\]
for all proper irreducible subvarieties of $X$.

For the sake of clarity, we make another simplifying assumption. We assume that $f=c_n$ is a constant. In this case, the positivity condition \cite[(1.2)]{DatarPingali} simplifies considerably, and is equivalent to demanding that the $c_k$ are not all zero.  This includes the case of all inverse Hessian equations. 

The space of gMA equations with $f=c_n$ a constant is in a natural way a codimension one closed submanifold of $\mathbb R_+[y]_{n}\times \mathcal K_X\times \mathcal K_X$ where $\mathbb R_+[y]_{n}$ denotes the set of degree $n$ non-zero real polynomials in one variable $y$ with zero constant term and all of whose coefficients are either zero or strictly positive. (We note that $\mathbb R_+[y]_n\times\mathcal K_X \times \mathcal K_X$ is in a natural way a manifold with corners.) We now explain this correspondence. Let $(P(y),\alpha,\beta)\in \mathbb R_+[y]_n \times \mathcal K_X\times \mathcal K_X$ satisfy the cohomological condition \eqref{eqn:gMAcoh} given by
\[
\int_X \exp(\alpha)\cdot(1-P(\beta)) = 0.
\]
This locus of triples $(P(y),\alpha,\beta)$ is a smooth submanifold because it is the zero locus of the function $F(P(y),\alpha,\beta) = \int_X \exp(\alpha)\cdot(1-P(\beta))$ and we have 
\[
\left.\frac{d}{dr}\right\vert_{r=0}F(P(y)-ry^k,\alpha,\beta)) = \frac{1}{(n-k)!}\int_X \alpha^{n-k}\cdot\beta^k > 0 
\]
where $k$ is any positive integer such that the coefficient of $y^k$ in $P(y)$ is non-zero. Such a $k$ always exists by assumption.  
Then, for any choice of Kähler forms $\omega\in\alpha,\theta\in\beta$ the triple $(P(y),\alpha,\beta)$ corresponds to the gMA equation that seeks a smooth $\psi\in\mathcal H(\omega)$ such that 
\begin{equation}\label{eqn:gMAcoh2}
    \exp(\omega_\psi)^{[n,n]}=(\exp(\omega_\psi)\wedge P(\theta))^{[n,n]},
\end{equation}
where $P(\theta)$ is the multi-degree form given by substituting $\theta$ for $y$ in the polynomial $P(y)$. Let us denote by $\mathcal S(X,\mathrm{gMA})$ the set of triples $(P(y),\alpha,\beta)\in\mathbb R_+[y]_n\times\mathcal K_X \times \mathcal K_X$ that satisfy the cohomological condition \eqref{eqn:gMAcoh2}. Recall that by Proposition \ref{lem:gMAfactorisable}, each triple $(P(y),\alpha,\beta)\in\mathcal S(X,\mathrm{gMA})$ defines a factorisable gMA equation. Let us denote by $\tau_p(\alpha,\beta,P(y))$ the associated factor class of degree $p$, as given by Definition \ref{def:factorisablegMAeq}. We shall denote by $\mathcal S_+(X,\mathrm{gMA})$ the subset of those triples $(P(y),\alpha,\beta)$ such that the associated factor classes $\tau_p(\alpha,\beta,P(y))$ of degree $p$ are $(p+1)$-modified Kähler classes. Our aim is to prove a wall structure type statement for the set $\mathcal S_+(X,\mathrm{gMA})$. 

\begin{lemma}
    For each $p=1,\dots,\dim_\mathbb C X -1$, the assignment 
    \[
    T:\mathcal S(X,\mathrm{gMA})\to (H^{1,1}(X,\mathbb R))^{n-1},\quad (P(y),\alpha,\beta)\mapsto (\tau_p(\alpha,\beta,P(y)))_{p=1}^{n-1}
    \]
    is continuous. 
\end{lemma}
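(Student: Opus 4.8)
The plan is to reduce the statement to the continuity of a single real-valued function, namely the selection of the factor root $r_p$, and then to exploit the rigid sign structure of the polynomials $h_p$ established in Lemma \ref{lem:gMAfactorisable}. Recall from Definition \ref{def:factorisablegMAeq} that the factor class of degree $p$ is $\tau_p(\alpha,\beta,P(y)) = \alpha - r_p\beta$, where $r_p\geq 0$ is the largest non-negative real root of
\[
h_p(x) = Q_p(x,1) = \frac{x^p}{p!} - \sum_{k=1}^{p}\frac{c_k}{(p-k)!}\,x^{p-k}.
\]
Since the component maps $(P(y),\alpha,\beta)\mapsto \alpha$ and $(P(y),\alpha,\beta)\mapsto \beta$ are continuous (they are coordinate projections on $\mathbb R_+[y]_n\times\mathcal K_X\times\mathcal K_X$), and since the coefficients $c_1,\dots,c_p$ entering $h_p$ are likewise continuous coordinate functions on $\mathbb R_+[y]_n$, the first step is to observe that continuity of $T$ follows once we show that the map $(P(y),\alpha,\beta)\mapsto r_p$ is continuous on $\mathcal S(X,\mathrm{gMA})$ for each $p=1,\dots,n-1$. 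In fact $r_p$ depends only on $(c_1,\dots,c_p)$, so it suffices to prove that the assignment sending a tuple of non-negative coefficients to the largest non-negative root of the associated $h_p$ is continuous.

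The key structural input, which I would extract from the proof of Lemma \ref{lem:gMAfactorisable}, is that $h_p(x) = (x - r_p)g_p(x)$ where $g_p$ has non-negative coefficients and leading coefficient $1/p!$. Consequently $g_p(x)\geq \tfrac{1}{p!}x^{p-1} > 0$ for every $x > 0$, so on the open half-line $(0,\infty)$ the sign of $h_p(x)$ agrees with the sign of $x - r_p$. In other words, $h_p$ is strictly negative on $(0,r_p)$, vanishes at $r_p$, and is strictly positive on $(r_p,\infty)$; the root $r_p$ is a genuine transversal sign change, and $h_p$ has no other root in $(0,\infty)$. The crucial point is that this description is stable: every triple in a neighbourhood of the base point still lies in $\mathcal S(X,\mathrm{gMA})$, hence still has all coefficients $c_k\geq 0$, so Lemma \ref{lem:gMAfactorisable} applies to it and its own factor root $\tilde r_p$ enjoys exactly the same sign dichotomy.

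With this in hand the continuity argument is a sandwiching via the intermediate value theorem. Fix a base point with root $r_p$ and let $\varepsilon>0$. If $r_p>0$, then $h_p(r_p+\varepsilon)>0$ and $h_p(r_p-\varepsilon)<0$ strictly, and since evaluation of $h_p$ at a fixed point is continuous in the coefficients, the perturbed polynomial $\tilde h_p$ satisfies $\tilde h_p(r_p+\varepsilon)>0$ and $\tilde h_p(r_p-\varepsilon)<0$ for all nearby data. Using the sign dichotomy for $\tilde h_p$ (it is $\leq 0$ on $[0,\tilde r_p]$ and $>0$ on $(\tilde r_p,\infty)$), the first inequality forces $\tilde r_p < r_p+\varepsilon$ and the second forces $\tilde r_p > r_p-\varepsilon$, whence $|\tilde r_p - r_p|<\varepsilon$. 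The degenerate case $r_p=0$, corresponding to $c_1=\dots=c_p=0$ and $h_p(x)=x^p/p!$, must be handled separately: here one uses $h_p(\varepsilon)=\varepsilon^p/p!>0$, which persists as $\tilde h_p(\varepsilon)>0$ for nearby data and forces $0\leq \tilde r_p<\varepsilon$. I expect the main obstacle to be precisely this continuity of a \emph{selected} real root: for a general family of polynomials the largest non-negative root need not vary continuously, as roots may collide, escape to the boundary, or emerge from the complex locus. The resolution is entirely the transversality $g_p>0$ on $(0,\infty)$ provided by Lemma \ref{lem:gMAfactorisable}, which guarantees that $r_p$ is an isolated simple sign-change that survives small perturbations, together with the explicit treatment of the boundary case $r_p=0$.
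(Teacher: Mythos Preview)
Your proof is correct and follows essentially the same route as the paper: reduce to showing that the factor root $r_p$, characterized as the largest non-negative real root of $h_p(x)=Q_p(x,1)$, depends continuously on the coefficients. The paper's version simply invokes the general fact that roots of a polynomial vary continuously with the coefficients, whereas you supply the missing justification for why the \emph{selected} root varies continuously by exploiting the sign structure $h_p(x)=(x-r_p)g_p(x)$ with $g_p>0$ on $(0,\infty)$ from Lemma~\ref{lem:gMAfactorisable}; this is a welcome clarification of a point the paper leaves implicit.
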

\begin{proof}
    Recall that $\tau_p(\alpha,\beta,P(y)) = \alpha - r_p\beta$, where $x-r_py$ is the linear form given by the factorisation 
    \[
    Q_p(x,y) = (x-r_py) \tilde Q_p(x,y).
    \]
    Here $Q_p(x,y)$ is the degree $p$ homogeneous part of the power series $\exp(x)(1-P(y))$, and $\tilde Q_p(x,y)$ has non-negative coefficients. By the proof of Lemma \ref{lem:gMAfactorisable}, it follows that $r_p$ is uniquely determined as the largest real root of $h_p(x) = Q_p(x,1)$. But the roots of a polynomial depend continuously on the coefficients.  
\end{proof}

\begin{corollary}
    The set $\mathcal S_+(X,\mathrm{gMA})$ is open in the manifold with corners $\mathcal S(X,\mathrm{gMA})$.
\end{corollary}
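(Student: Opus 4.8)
The plan is to realize $\mathcal S_+(X,\mathrm{gMA})$ as the preimage of an open set under the continuous map $T$ established in the preceding lemma, and then invoke the elementary fact that the preimage of an open set under a continuous map is open. Throughout, $\mathcal S(X,\mathrm{gMA})$ carries the subspace topology inherited from $\mathbb R_+[y]_n\times\mathcal K_X\times\mathcal K_X$, so all openness statements are understood relative to this (manifold-with-corners) topology.

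First I would observe that, directly from the definition of $\mathcal S_+(X,\mathrm{gMA})$, a triple $(P(y),\alpha,\beta)$ lies in $\mathcal S_+(X,\mathrm{gMA})$ precisely when $\tau_p(\alpha,\beta,P(y))\in\mathcal M_{p+1}\mathcal K$ for every $p=1,\dots,n-1$; equivalently,
\[
\mathcal S_+(X,\mathrm{gMA}) = T^{-1}\Big(\textstyle\prod_{p=1}^{n-1}\mathcal M_{p+1}\mathcal K\Big),
\]
where $T(P(y),\alpha,\beta)=(\tau_p(\alpha,\beta,P(y)))_{p=1}^{n-1}$ is the map shown to be continuous above. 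This preimage is well-defined on all of $\mathcal S(X,\mathrm{gMA})$ because, by Lemma \ref{lem:gMAfactorisable}, every such triple defines a factorizable gMA equation and hence possesses well-defined factor classes $\tau_p$.

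Next I would note that each cone $\mathcal M_{p+1}\mathcal K$ is open in $H^{1,1}(X,\mathbb R)$, since by definition it is the interior of the $(p+1)$-modified nef cone $\mathcal M_{p+1}\mathcal N$. Consequently the finite product $\prod_{p=1}^{n-1}\mathcal M_{p+1}\mathcal K$ is open in $(H^{1,1}(X,\mathbb R))^{n-1}$. Applying continuity of $T$ then shows that $T^{-1}$ of this product, namely $\mathcal S_+(X,\mathrm{gMA})$, is open in $\mathcal S(X,\mathrm{gMA})$, as claimed.

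There is essentially no remaining obstacle: the only substantive input is the continuity of $T$, which was the content of the previous lemma and rested on the continuous dependence of the largest real root $r_p$ of $h_p(x)=Q_p(x,1)$ on the coefficients of $P(y)$. The sole point requiring a moment's care is that openness and continuity are taken with respect to the subspace topology on $\mathcal S(X,\mathrm{gMA})$, but this causes no difficulty, since the preimage of an open set under a map continuous for the subspace topology is open in that same topology.
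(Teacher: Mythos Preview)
Your proof is correct and follows exactly the same approach as the paper: both argue that $\mathcal S_+(X,\mathrm{gMA})$ is the preimage under the continuous map $T$ of the open set $\prod_{p=1}^{n-1}\mathcal M_{p+1}\mathcal K_X$, using that each $\mathcal M_{p+1}\mathcal K_X$ is open by definition. The paper states this in a single sentence, while you have spelled out the details more fully.
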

\begin{proof}
    This follows immediately from the Lemma and the fact that $\mathcal M_p \mathcal K_X$ are open (by definition) in $H^{1,1}(X,\mathbb R)$. 
\end{proof}

Now, let $(P(y),\alpha,\beta)\in \mathcal S_+(X,{\mathrm{gMA}})$ be given. Using Lemma \ref{lem:wallchamberlemma}, we can find open neighbourhoods $U_p\subseteq \mathcal M_{p+1}\mathcal K_X$ of $\tau_p(\alpha,\beta,P(y))$ and finite sets $S_p$ of $p$-dimensional irreducible
subvarieties of $X$ such that for all $\tau^\prime \in U_p$, all irreducible $p$-dimensional subvarieties $V$ and all K\"ahler forms $\omega_1,\dots,\omega_{p-1}$ whenever we have 
\[\int_V \tau_p(\alpha,\beta,P(y))\cdot[\omega_1]\cdot\dots\cdot[\omega_{p-1}] \leq 0\]
then $V \in S_p$. Let $U =  T^{-1}(U_1\times U_2 \times\dots\times U_{n-1})$. Then $U$ is an open neighbourhood of $(P(y),\alpha,\beta)$ in $\mathcal S_+(X,{\mathrm{gMA}})$. In fact, for any Kähler form $\theta\in\beta$, if $V$ is any (gMA) destabiliser for the triple $(X,\alpha,P(\theta))$ then we have 
\[
\int_V \exp(\alpha)\cdot(1-P(\beta)) = \int_V \tau_p(\alpha,\beta,P(y))\cdot \tilde Q_p(\alpha,\beta)) \leq 0,
\]
but since $\tilde Q_p(\alpha,\beta)$ is a non-negative linear combination of products of powers of Kähler classes $\alpha$ and $\beta$, we must have 
\[
\int_V \tau_p(\alpha,\beta)\cdot \alpha^r \cdot \beta^{p-r-1} \leq 0
\]
for some $0\leq r \leq p-1$. But since $\tau_p(\alpha,\beta,P(y))\in U_p$ we get immediately that this implies that $V\in S_p$. 

Let us denote by $\mathcal S_+(X,\mathrm{gMA})^\mathrm{Stab}$ the locus of those $(P(y),\alpha,\beta)$ such that $(X,\alpha,P(\theta))$ is (gMA) stable for any choice of Kähler form $\theta \in \beta$. 

\begin{theorem}
The boundary $\partial\mathcal S_+(X,\mathrm{gMA})^{\mathrm{Stab}}$ of $\mathcal S_+(X,\mathrm{gMA})^{\mathrm{Stab}}$ in $\mathcal S_+(X,\mathrm{gMA})$  is a locally finite union of closed submanifolds $W$ of $\mathcal S_+(X,\mathrm{gMA})$ of (real) codimension one, each one of them cut out by an equation of the form
        \[
        \int_V \exp(\alpha)\cdot(1-P(\beta)) = 0.
        \] 
\end{theorem}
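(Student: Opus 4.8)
The plan is to localize around an arbitrary boundary point and reduce (gMA-)stability to finitely many explicit numerical inequalities, whose common vanishing loci will be the sought walls. Fix $\Gamma=(P(y),\alpha,\beta)\in\partial\mathcal S_+(X,\mathrm{gMA})^{\mathrm{Stab}}$. Applying the local construction carried out just before the statement (which rests on Lemma \ref{lem:wallchamberlemma} and continuity of the factor-class map $T$), I obtain an open neighbourhood $U\subseteq\mathcal S_+(X,\mathrm{gMA})$ of $\Gamma$ and a finite set $S=S_1\cup\dots\cup S_{n-1}$ of proper irreducible subvarieties such that every (gMA-)destabilizer of every triple in $U$ already belongs to $S$. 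Writing
$$
G_V(P(y),\alpha,\beta):=\int_V\exp(\alpha)\cdot(1-P(\beta)),
$$
and invoking the Nakai--Moishezon criterion of Theorem \ref{thm: Nakai-Moishezon for gMA}, this means that on $U$ the stable locus is cut out by finitely many strict inequalities,
$$
U\cap\mathcal S_+(X,\mathrm{gMA})^{\mathrm{Stab}}=\{(P',\alpha',\beta')\in U:\ G_V(P',\alpha',\beta')>0\ \text{for all }V\in S\},
$$
since the elements of $S$ are the only subvarieties capable of violating the criterion on $U$.

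Next I would identify the boundary. A continuity argument shows that any $\Gamma'\in U\cap\partial\mathcal S_+(X,\mathrm{gMA})^{\mathrm{Stab}}$ satisfies $G_V(\Gamma')\geq 0$ for all $V\in S$ (being a limit of stable points) and $G_{V_0}(\Gamma')=0$ for at least one $V_0\in S$ (otherwise all inequalities would be strict and $\Gamma'$ would be interior to the stable locus). Hence
$$
U\cap\partial\mathcal S_+(X,\mathrm{gMA})^{\mathrm{Stab}}\subseteq\bigcup_{V\in S}W_V,\qquad W_V:=\{G_V=0\}\cap\mathcal S_+(X,\mathrm{gMA}),
$$
a finite union. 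As $\Gamma$ was arbitrary, this yields the local finiteness asserted in the statement, with each local piece of the boundary a relatively closed subset of one of the finitely many walls $W_V$.

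It remains to verify that each $W_V$ is a smooth real hypersurface of $\mathcal S_+(X,\mathrm{gMA})$, i.e. cut out nondegenerately. Recall $\mathcal S(X,\mathrm{gMA})=F^{-1}(0)$ for $F(P(y),\alpha,\beta)=\int_X\exp(\alpha)\cdot(1-P(\beta))$. For $V\in S_p$ of dimension $p\leq n-1$, differentiating in the coefficients $c_k$ of $P(y)=\sum_{k=1}^n c_k y^k$ gives
$$
\frac{\partial G_V}{\partial c_k}=-\frac{1}{(p-k)!}\int_V\alpha^{p-k}\cdot\beta^k\ \ (1\leq k\leq p),\qquad \frac{\partial G_V}{\partial c_k}=0\ \ (k>p),
$$
and likewise $\partial F/\partial c_k=-\tfrac{1}{(n-k)!}\int_X\alpha^{n-k}\cdot\beta^k$. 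Since $\alpha,\beta$ are Kähler and $V$ is an effective $p$-cycle, all these intersection numbers are strictly positive. In particular $\partial G_V/\partial c_n=0$ while $\partial F/\partial c_n=-\int_X\beta^n<0$, so if $\nabla_P G_V=\lambda\nabla_P F$ then $\lambda=0$; but $\partial G_V/\partial c_1=-\int_V\alpha^{p-1}\cdot\beta<0$, giving $\nabla_P G_V\neq 0$, a contradiction. Thus $dF$ and $dG_V$ are everywhere linearly independent, the map $(F,G_V)$ is a submersion, and $W_V=(F,G_V)^{-1}(0,0)$ is a codimension-one submanifold of $\mathcal S(X,\mathrm{gMA})=\{F=0\}$, as required. (On the corner strata of the manifold with corners $\mathbb R_+[y]_n\times\mathcal K_X\times\mathcal K_X$ one checks the same independence relative to the stratum, using whichever coefficients $c_k$ are nonzero.)

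The main obstacle is precisely this last nondegeneracy step: one must rule out that a wall $\{G_V=0\}$ becomes tangent to, or is absorbed into, the cohomological constraint $\{F=0\}$, which would destroy the codimension-one submanifold structure. The key observation making it work is dimensional: because $V$ is a proper subvariety ($\dim V<n$), the top coefficient $c_n$ enters $F$ but not $G_V$, which forces the two differentials apart. The remaining ingredients — continuity of $T$, the reduction to finitely many $V$, and the continuity argument locating the boundary — are comparatively routine given the preparatory lemmas.
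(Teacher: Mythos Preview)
Your proposal is correct and follows essentially the same route as the paper. The paper's own proof is very terse: it defers local finiteness entirely to the discussion preceding the statement (exactly as you do) and for the codimension-one claim simply asserts that ``one can easily show that zero is a regular value'' of $G_V$; you supply that computation explicitly, and your observation that the top coefficient $c_n$ enters $F$ but not $G_V$ (since $\dim V<n$) is precisely the clean way to separate the two differentials.
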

\begin{proof}
    The local finiteness follows from the discussion preceding the statement of the Theorem. The only claim that needs justification is that the boundary loci are submanifolds of codimension one. But they are the zero locus of a function $F$ of the form $F(P(y),\alpha,\beta))= \int_V \exp(\alpha)\cdot(1-P(\beta))$. One can easily show that zero is a regular value of this function.
\end{proof}
\bigskip


\medskip

\end{document}